\documentclass[12pt]{amsart}

\usepackage{amssymb, amsmath, epsfig, graphics, amscd, ifthen, verbatim,diagrams}

\setlength{\arraycolsep}{2pt}

\theoremstyle{plain}
\newtheorem{theorem}{Theorem}[section]
\newtheorem{proposition}[theorem]{Proposition}
\newtheorem{corollary}[theorem]{Corollary}

\newtheorem{lemma}[theorem]{Lemma}

\theoremstyle{definition}
\newtheorem{definition}[theorem]{Definition}

\newtheorem{example}[theorem]{Example}

\newtheorem{remark}[theorem]{Remark}

\newcommand{\Spec}{\mathop{\rm Spec}\nolimits}

\newcommand{\Hom}{\mathop{\rm Hom}\nolimits}

\newcommand{\Sym}{\mathop{\rm Sym}\nolimits}
\newcommand{\Hilb}{\mathop{\rm Hilb}\nolimits}

\newcommand{\Exp}{\mathop{\rm Exp}\nolimits}

\newcommand{\GL}{\mathop{\rm GL}\nolimits}

\newcommand{\MHM}{\mathop{\rm MHM}\nolimits}
\newcommand{\Tr}{\mathop{\rm Tr}\nolimits}
\newcommand{\Sing}{\mathop{\rm Sing}\nolimits}

\newcommand{\vol}{\mathop{\rm vol}\nolimits}
\newcommand{\Var}{\mathop{\rm Var}\nolimits}
\newcommand{\relvir}{{\rm relvir}}
\newcommand{\vir}{{\rm vir}}
\newcommand{\red}{{\rm red}}
\newcommand{\aff}{{\rm aff}}
\newcommand{\half}{\frac{1}{2}}

\renewcommand{\Box}{\square}
\renewcommand{\tilde}{\widetilde}

\newcommand{\M}{\mathcal M}

\newcommand{\F}{\mathbb F}

\newcommand{\N}{\mathbb N}

\renewcommand{\AA}{\mathbb A}

\newcommand{\C}{\mathbb C}
\newcommand{\CC}{\mathcal C}
\newcommand{\Z}{\mathbb Z}
\renewcommand{\O}{\mathcal O}

\newcommand{\PP}{\mathbb P}
\newcommand{\MC}{\M_\C}
\newcommand{\CP}{\mathcal P}
\newcommand{\Q}{\mathbb Q}
\renewcommand{\phi}{\varphi}

\newcommand{\LL}{\mathbb{L}}

\newcommand{\Lfact}[1]{[#1]_{\LL }!}
\newcommand{\Lbinom}[2]{\left[\begin{matrix}{#1}\\{#2}\end{matrix}\right]_{\LL }}
\newcommand{\muhat}{{\hat\mu}}
\newcommand{\DT}{Do\-nald\-son--Tho\-mas }
\newcommand{\BiBi}{Bia{\l}ynicki-Birula }
\newcommand{\Index}{\operatorname{index}}
\newcommand{\CY}{Ca\-la\-bi--Yau }
\newcommand{\motivicring}{ring of motivic weights }

\newcommand{\point}{\text{pt}}

\newcommand{\color}[6]{}

\newarrow{Into}C--->

\numberwithin{equation}{section}

\title[Motivic degree zero DT invariants]{Motivic degree zero \\ Donaldson--Thomas invariants}

\author[Kai Behrend]{Kai Behrend}
\address{Dept. of Math., University of British Columbia, Vancouver, BC, Canada.}
\email{behrend@math.ubc.ca}

\author[Jim Bryan]{Jim Bryan}
\address{Dept. of Math., University of British Columbia, Vancouver, BC, Canada.}
\email{jbryan@math.ubc.ca}

\author[Bal\'azs Szendr\H oi]{Bal\'azs Szendr\H oi} 
\address{Math. Inst., University of Oxford, Oxford, United Kingdom.}
\email{szendroi@maths.ox.ac.uk}

\begin{document}

\begin{abstract} Given a smooth complex threefold $X$, we define
the virtual motive $[\Hilb ^{n}(X)]_{\vir }$ of the Hilbert scheme of
$n$ points on $X$. In the case when $X$ is Calabi--Yau, $[\Hilb
^{n}(X)]_{\vir }$ gives a motivic refinement of the $n$-point degree
zero Donaldson--Thomas invariant of $X$. The key example is $X=\C^{3}$, 
where the Hilbert scheme can be expressed as the critical locus of a
regular function on a smooth variety, and its virtual motive is defined
in terms of the Denef--Loeser motivic nearby fiber. A crucial
technical result asserts that if a function is equivariant
with respect to a suitable torus action, its motivic nearby fiber is
simply given by the motivic class of a general fiber. This allows us
to compute the generating function of the virtual motives $[\Hilb ^{n}
(\C ^{3})]_{\vir }$ via a direct computation involving the motivic
class of the commuting variety. We then give a formula for the
generating function for arbitrary $X$ as a motivic exponential,
generalizing known results in lower dimensions. The weight polynomial
specialization leads to a product formula in terms of deformed
MacMahon functions, analogous to G\"ottsche's formula for the
Poincar\'e polynomials of the Hilbert schemes of points on surfaces.
\end{abstract}

\maketitle


\section*{Introduction} 

Let $Z$ be a scheme of finite type over $\C $. The \emph{virtual Euler
characteristic} of $Z$ is defined to be the topological Euler
characteristic, weighted by the integer-valued constructible function
$\nu _{Z}$ introduced by the first author \cite{Behrend-micro}:
\[
\chi _{\vir } (Z) = \sum _{k\in \Z }\,k\,\chi \left(\nu _{Z}^{-1} (k)
\right).
\]
Unlike the ordinary Euler characteristic, the virtual Euler
characteristic is sensitive to singularities and scheme structure.  A
\emph{virtual motive} of $Z$ is an element $[Z]_{\vir }$ in a suitably
augmented Grothendieck group of varieties (the ``\motivicring'', 
see \S\ref{subsec:ring:of:motives} $\MC$) such that
\[
\chi \left([Z]_{\vir} \right) = \chi _{\vir } (Z).
\]
In the context where $Z$ is a moduli space of sheaves on a \CY
threefold $X$, the virtual Euler characteristic $\chi _{\vir } (Z)$ is
a (numerical) \DT invariant. In this setting, we say that $[Z]_{\vir
}$ is a \emph{motivic \DT invariant}.

In this paper, we construct a natural virtual motive $[\Hilb ^{n}
(X)]_{\vir }$ for the Hilbert scheme of~$n$ points on a smooth
threefold $X$. In the \CY case, the virtual Euler characteristics of 
the Hilbert schemes of points are the degree zero \DT invariants of $X$ 
defined in \cite{MNOP1} and computed in \cite{Behrend-Fantechi08,Li-zeroDT,
Levine-Pandharipande} (cf.~Remark~\ref{rem:CY_or_not_CY}). 
So we call our virtual motives $[\Hilb ^{n}
(X)]_{\vir }$ the motivic degree zero \DT invariants of~$X$.

If $f:M\to \C $ is a regular function on a smooth variety and
\[
Z=\{df=0 \}
\]
is its scheme theoretic degeneracy locus, then there is a natural
virtual motive (Definition~\ref{defn: virtual motive}) given by
\[
[Z]_{\vir } = -\LL ^{-\frac{\dim M}{2}}[\phi _{f}],
\]
where $[\phi _{f}]$ is the motivic vanishing cycle defined by
Denef-Loeser \cite{Denef-Loeser-geometry,Looijenga-motivic} and $\LL$
is the Lefschetz motive. (This is similar to Kontsevich and Soibelman's
approach to motivic Donaldson--Thomas invariants
\cite{Kontsevich-Soibelman}). The function $f$ is often called a global
Chern-Simons functional or super-potential. This setting
encompasses many useful cases such as when $Z$ is smooth (by letting
$(M,f)= (Z,0)$) and (less trivially) when $Z$ arises as a moduli space
of representations of a quiver equipped with a super-potential.  The
latter includes $\Hilb^{n} (\C ^{3})$ which we show is given as the
degeneracy locus of an explicit function $f_{n}:M_{n}\to \C$ on a
smooth space $M_n$ (see \S\ref{subsec: the hilb scheme as a crit
locus}).

We prove (Theorem~\ref{thm: main result of appendix},
cf. Propositions~\ref{prop: if f is proper and C* equivariant then
vanishing cycle is X1-X0}, \ref{prop: relative and effective version
of vanishing cycle = X1 - X0 proposition}, and \ref{prop: if f is C*
equiv and circle compact, then vanishing cycle is X1-X0}) that if
$f:M\to \C $ is equivariant with respect to a torus action satisfying
certain properties, then the motivic vanishing cycle is simply given
by the class of the general fiber minus the class of the central
fiber:
\[
[\phi _{f}] = [f^{-1} (1)] - [f^{-1} (0)].
\]
This theorem should be applicable in a wide variety of quiver settings
and should make the computation of the virtual motives $[Z]_{\vir }$
tractable by quiver techniques.

Indeed, we apply this to compute the motivic degree zero \DT partition
function
\[
Z_{X} (t) = \sum _{n=0}^{\infty } [\Hilb ^{n} (X)]_{\vir }\,t^{n}
\]
in the case when $X$ is $\C ^{3}$. Namely, it is given in
Theorem~\ref{thm_C3} as
\[
Z_{\C ^{3}} (t) = \prod _{m=1}^{\infty }\prod _{k=0}^{m-1}\left(1-\LL ^{k+2-\frac{m}{2}}\,t^{m} \right)^{-1}.
\]

The virtual motive $[\Hilb ^{n} (\C ^{3})]_{\vir}$ that we construct
via the super-potential $f_{n}$ has good compatibility properties with
respect to the Hilbert-Chow morphism $\Hilb ^{n} (\C ^{3})\to \Sym
^{n} (\C ^{3})$.  Consequently we are able to use these virtual
motives to define virtual motives $[\Hilb ^{n} (X)]_{\vir }$ for the
Hilbert scheme of any smooth threefold $X$ (see \S\ref{subsec: virtual
motives of the Hilb scheme of a general threefold}). The now standard
technology~\cite{Cheah, Gottsche-motive,
Gusein-Zade-Luengo-Melle-Hernandez-Hilb,Getzler-mixed-hodge} allows us
to express in Theorem~\ref{thm_general_formula} the motivic degree
zero \DT partition function of any threefold $X$ as a motivic
exponential
\[
Z_{X} (-t) = \Exp \left([X] \frac{-\LL ^{-\frac{3}{2}}\,t}{(1+\LL
^{\frac{1}{2}}\,t)(1+\LL ^{-\frac{1}{2}}\,t)} \right).
\]
See \S\ref{subsec: power str} for the definition of $\Exp $.

While the above formula only applies when $\dim (X)=3$, it fits well
with corresponding formulas for $\dim (X)<3$. In these cases, the
Hilbert schemes are smooth and thus have canonical virtual motives
which are easily expressed in terms of the ordinary classes $[\Hilb
^{n} (X)]$ in the Grothendieck group. The resulting partition
functions have been computed for curves
\cite{Gusein-Zade-Luengo-Melle-Hernandez-Hilb} and surfaces
\cite{Gottsche-motive}, and all these results can be expressed
(Corollary~\ref{cor: pithy formula for Z in all dims}) in the
single formula 
\[
Z_{X} (T) = \Exp \Big (\,T [X]_{\vir } \Exp \left(T[\PP ^{d-2}]_{\vir
} \right) \, \Big )
\]
valid when $d=\dim (X)$ is 0, 1, 2, or 3. Here 
\[  [X]_{\vir } = \LL^{-\frac{d}{2}}[X],  
\]
also\footnote{Note that the
variable ``$t$'' has special meaning in the definition of ``$\Exp$'';
in particular, one cannot simply substitute $t$ for $T$ in the above
equation for~$Z_{X}$.}
\[
T= (-1)^{d}t,
\]
and the class of a negative dimensional projective space is defined
by~\eqref{eqn: defn of neg dim proj space}. In particular, $[\PP
^{-1}]_{\vir } = 0$ and $[\PP ^{-2}]_{\vir } = -1$. 
There are some indications that the above
formula has significance for $\dim X >3$; see Remarks \ref{rem: the
one formula is valid for n<4} and \ref{rem: Euler char specialization
of the one formula is MacMahon's guess for dim d partitions}.

The weight polynomial specialization of the class of a projective
manifold gives its Poincar\'e polynomial. For example, if $X$ is a
smooth projective threefold, we get
\[
W \left([X],q^{\frac{1}{2}} \right) = \sum _{d=0}^{6} b_{d}\,q^{\frac{d}{2}},
\]
where $b_{d}$ is the degree $d$ Betti number of $X$.  Taking the
weight polynomial specialization of the virtual motives of the Hilbert
schemes gives a virtual version of the Poincar\'e polynomials of the
Hilbert schemes. In Theorem~\ref{thm: formula for virtual weight
polys} we apply weight polynomials to our formula for $Z_{X} (t)$ to
get
\[
\sum _{n=0}^{\infty } W \left([\Hilb ^{n} (X)]_{\vir },q^{\frac{1}{2}}
\right)\,t^{n} = \prod _{d=0}^{6} M_{\frac{d-3}{2}}\left(-t,-q^{\frac{1}{2}}
\right)^{(-1)^{d}b_{d}}
\]
where $M_{\delta }$ is the $q$-deformed MacMahon
function
\[
M_{\delta } \left(t,q^{\frac{1}{2}} \right) = \prod _{m=1}^{\infty
}\prod _{k=0}^{m-1} \left(1- q^{k+\frac{1}{2}-\frac{m}{2}+\delta
}\,t^{m} \right)^{-1}.
\]
The above formula is the analog for threefolds of G\"ottsche's famous
product formula \cite{Gottsche-formula} for the Poincar\'e polynomials of
Hilbert schemes on surfaces. Similar $q$-deformed MacMahon functions
appear in the refined topological vertex of Iqbal-Kozcaz-Vafa
\cite{Iqbal-Kozcaz-Vafa}. We discuss $M_{\delta }$ further in
Appendix~\ref{appendix: q-def of MacMahon}.

In addition to motivic \DT invariants, one may consider
\emph{categorified \DT invariants}. Such a categorification is a lift
of the numerical \DT invariant $\chi _{\vir } (Z)$ to an object
$[Z]_{cat}$ in a category with a cohomological functor $H^{\bullet }$
such that 
\[
\chi (H^{\bullet } ([Z]_{cat}))=\chi _{\vir } (Z).
\]
We have partial results toward constructing categorified degree zero
\DT invariants which we discuss in \S\ref{subsec: categorified DT
invs}. See also the recent work~\cite{KS_new}.

\section{Motivic weights and vanishing cycles}

All our varieties and maps are defined over the field $\C$ of complex
numbers.

\subsection{The \motivicring}
\label{subsec:ring:of:motives}

Let $K_0(\Var_\C)$ be the $\Z$-module generated by isomorphism classes
of reduced $\C$-varieties\footnote{One can also consider the rings
$K_{0} (Sch_{\C })$ and $K_{0} (Sp_{\C})$ generated by schemes or
algebraic spaces (of finite type) with the same relations. By
\cite[Lemma~2.12]{Bridgeland-intro}, they are all the same. In
particular, for $X$ a scheme, its class in $K_{0} (\Var _{\C })$ is
given by $[X_{\red }]$, the class of the associated reduced scheme. We
will implicitly use this identification throughout the paper without
further comment.}, under the scissor relation
\[  [X]=[Y] + [X\setminus Y]\in K_0(\Var_\C)
\]
for $Y\subset X$ a closed subvariety. $K_0(\Var_\C)$ has a ring
structure whose product is the Cartesian product of varieties. The
following two properties of the setup are well known.

\begin{enumerate}
\item If $f\colon X\to S$ is a Zariski locally trivial fibration with 
fiber $F$, then
\[ [X]=[S]\cdot[F]\in K_0(\Var_\C).
\]
\item If $f\colon X\to Y$ is a bijective morphism, then 
\[ [X]=[Y]\in K_0(\Var_\C).
\]
\end{enumerate}

Let 
\[ \LL =[\AA^1]\in K_0(\Var_\C)\]
be the class of the affine line. We define the \emph{\motivicring }
(or \emph{motivic ring} for short) to be
\[  
\M_\C= K_0(\Var_\C)[\LL ^{-\half}]. 
\]

We set up notation for some elements of $\M_\C$ that we will use
later. Let
\[
\Lfact{n} = (\LL ^{n}-1) (\LL ^{n-1}-1)\dotsb (\LL -1)
\]
and let
\[
\Lbinom{n}{k} = \frac{\Lfact{n}}{\Lfact{n-k}\Lfact{k}}.
\]
Using property (1) above, elementary arguments show that
\[
[\GL_{n}] = \LL ^{\binom{n}{2}}\Lfact{n}
\]
Then using the elementary identity
$\binom{n}{2}-\binom{k}{2}-\binom{n-k}{2}= (n-k)k$, the class of the
Grassmanian is easily derived:
\[
\left[Gr (k,n) \right] =
\Lbinom{n}{k}.
\]
For later reference, we recall the computation of the motivic weight
of the stack of pairs of commuting matrices. Let $V_n$ be an
$n$-dimensional vector space, and let \[C_n\subset \Hom(V_n,
V_n)^{\times 2}\] denote the (reduced) variety of pairs of commuting
$n\times n$ matrices over the complex numbers. Let
\begin{equation}
\tilde{c}_{n}= \frac{[C_{n}]}{[\GL _{n}]}\in\M_\C[(1-\LL ^n)^{-1}\colon {n\geq 1}]
\label{eq:tildecn}
\end{equation}
be its class, renormalized by taking account of the global symmetry 
group $\GL_n$. Consider the generating series
\begin{equation} C(t)= \sum_{n\geq 0} \tilde c_n t^n. 
\label{eq_Cseries}
\end{equation}
\begin{proposition} \label{prop: Feit-Fine}
We have
\begin{equation}
C(t) = \prod_{m=1}^{\infty }\prod_{j=0}^{\infty} (1-\LL ^{1-j}t^{m})^{-1}.
\label{eq_Cseries_result}
\end{equation}
\end{proposition}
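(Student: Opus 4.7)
The plan is to transfer Feit and Fine's 1960 argument over finite fields to the motivic setting. The starting point is the standard reinterpretation of a commuting pair $(A,B)\in C_n$ as a $\C[x,y]$-module structure on $V_n$: two pairs are $\GL_n$-conjugate if and only if the corresponding modules are isomorphic, so $\tilde c_n = [C_n]/[\GL_n]$ is the motivic mass of the groupoid of $n$-dimensional $\C[x,y]$-modules, i.e.\ a motivic sum of $[\Aut(M)]^{-1}$ over isomorphism classes.

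Next I would exploit the canonical support decomposition of any finite-dimensional $\C[x,y]$-module $M = \bigoplus_{p\in\AA^2} M_p$, where $M_p$ is the submodule annihilated by a power of the maximal ideal at $p$. Each summand $M_p$ is a finite-length module over the completed local ring $\widehat{\O}_{\AA^2,p}\cong\C[\![x,y]\!]$, and translation in $\AA^2$ identifies such modules with those supported at the origin---equivalently, pairs of commuting \emph{nilpotent} matrices. Letting $N_n\subset C_n$ denote the nilpotent commuting variety, $\tilde n_n = [N_n]/[\GL_n]$, and $N(t) = \sum_n \tilde n_n t^n$, the support decomposition together with translation invariance yields a motivic power-structure identity of the form $C(t) = \Exp\bigl([\AA^2]\cdot(N(t)-1)\bigr)$, reducing the problem to computing $N(t)$.

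To compute $N(t)$ I would stratify $N_n$ by the Jordan type $\la\vdash n$ of the first matrix $A$. The locus of nilpotents of type $\la$ is a single $\GL_n$-orbit $O_\la$ of motivic class $[\GL_n]/[Z_{\GL_n}(A_\la)]$, and its commutant $Z(A_\la)\subset\Hom(V_n,V_n)$ is a linear subspace of dimension $\sum_i(\la'_i)^2$, where $\la'$ is the conjugate partition. Packaging these contributions over $\la$ gives a closed partition sum for $N(t)$, and combining with the $[\AA^2]=\LL^2$ factor via the power structure should yield the right-hand side of~\eqref{eq_Cseries_result}. The main obstacle is precisely this concluding combinatorial step: matching the partition-indexed motivic sum, with its intricate Lefschetz-motive bookkeeping coming from the dimensions of orbits and centralizers, to the doubly infinite product requires a motivic lift of the MacMahon-style manipulations at the heart of the classical Feit--Fine identity.
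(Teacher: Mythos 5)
The paper's proof of Proposition~\ref{prop: Feit-Fine} is a citation: Feit and Fine proved the $\F_q$-count, their argument is a stratification into affine cells and hence motivic, and the details of the motivic transfer are carried out by Morrison. Your proposal is an attempt to reconstruct that argument from scratch via the module-theoretic reformulation (support decomposition over $\AA^2$ followed by stratification by Jordan type), which is indeed the right conceptual skeleton. But as written it contains concrete errors and, by your own admission, does not close the central combinatorial step, so it is not a proof.

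Two specific problems. First, the factorization over $\AA^2$ is not $C(t)=\Exp\bigl([\AA^2](N(t)-1)\bigr)$; the support decomposition of modules gives $C(t)=N(t)^{[\AA^2]}$ in the power structure, which equals $\Exp\bigl([\AA^2]\cdot\mathrm{Log}\,N(t)\bigr)$ with $\mathrm{Log}$ the plethystic logarithm, not $N(t)-1$. Moreover the $\tilde n_k$ are classes in the localized ring $\M_\C[(1-\LL^n)^{-1}]$, so you need the power structure to extend to such coefficients — a point worth justifying rather than taking for granted (Morrison sidesteps this by stratifying the honest variety $C_n$ instead of the stacky quotient). Second, if $N_n$ is the commuting \emph{nilpotent} variety, then the fiber of the stratum of Jordan type $\la$ over the orbit $O_\la$ is the set of nilpotent elements of $Z(A_\la)$, which is a cone but not the linear space of dimension $\sum_i(\la'_i)^2$; you have instead computed the full linear commutant, which only parametrizes pairs with $A$ nilpotent and $B$ arbitrary. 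You can repair this by working with the ``half-nilpotent'' variety and factoring over $\AA^1$ rather than $\AA^2$, but then the accompanying power-structure identity and the target partition identity change accordingly. A quick sanity check exposes the mismatch: the stated formula forces $[N_n]/[\GL_n]=1/(\LL-1)$ for all $n\geq1$, which already fails at $n=2$ where a direct computation gives $[N_2]=\LL^3+\LL^2-\LL$. Finally, the partition identity you defer to — matching $\sum_{|\la|=n}\prod_i\LL^{\binom{m_i+1}{2}}/\Lfact{m_i}$ to the coefficient of the infinite product — is precisely the heart of Feit--Fine, and leaving it as ``the main obstacle'' means the proof is genuinely incomplete.
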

\proof The main result of the paper of Feit and Fine \cite{Feit-Fine}
is the analogous formula
\[
C (t,q) = \prod_{m=1}^{\infty }\prod_{j=0}^{\infty} (1-q^{1-j}t^{m})^{-1}
\]
for the generating series of the number of pairs of commuting matrices
over the finite field~$\F_q$, renormalized as above. Feit and Fine's
method is motivic; in essence they provide an affine paving of
$C_{n}$. For details, see~\cite{AMorrison}. \endproof

\begin{remark}
\label{rem: coeffs of the feit-fine formula are rational fncs in L}
In~\eqref{eq_Cseries}, the coefficient $\tilde c_n$ of $t^{n}$ in $C(t)$ is 
in the ring $\M_\C[(1-\LL ^n)^{-1}\colon {n\geq 1}]$. In~\eqref{eq_Cseries_result}, 
the coefficients are the Laurent expansions in~$\LL $ of these elements.
\end{remark}

The following result is now standard~\cite[Lemma 4.4]{Gottsche-motive}.

\begin{lemma} Let $Z$ be a variety with the free action of a finite 
group~$G$. Extend the action of $G$ to $Z\times\AA^n$ using a linear action 
of $G$ on the second factor. Then the motivic weights of the quotients
are related by
\[ [(Z\times\AA^n)/G] = \LL^n[Z/G]\in\MC. 
\]
\label{lem}
\end{lemma}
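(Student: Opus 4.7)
The plan is to identify the quotient map $(Z\times\AA^n)/G\to Z/G$ as a Zariski locally trivial $\AA^n$-bundle and then apply property (1) of the Grothendieck ring.

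First I would check that the diagonal $G$-action on $Z\times\AA^n$ is free: the projection $Z\times\AA^n\to Z$ is $G$-equivariant, and since $G$ acts freely on $Z$ the stabilizer of any point of $Z\times\AA^n$ is trivial. Hence the geometric quotient $(Z\times\AA^n)/G$ exists as a variety, and since $G$ is finite the quotient map $Z\to Z/G$ is a finite étale Galois cover, i.e.\ a principal $G$-bundle.

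Next I would recognize $(Z\times\AA^n)/G\to Z/G$ as the vector bundle associated to the principal $G$-bundle $Z\to Z/G$ via the linear representation $\rho\colon G\to \GL_n$. Concretely, writing $\pi\colon Z\to Z/G$, the sheaf of sections is
\[
\E = \bigl(\pi_*\O_Z \otimes_\C V\bigr)^{G},
\]
where $V=\C^n$ carries the representation $\rho$. Since $\pi$ is finite étale, $\pi_*\O_Z$ is locally free of rank $|G|$, so $\E$ is a coherent sheaf on $Z/G$; the standard étale-descent argument shows $\E$ is locally free of rank $n$. Locally free sheaves of finite rank on any scheme are Zariski locally trivial (a finitely generated projective module over a commutative local ring is free), so the associated vector bundle is Zariski locally trivial with fiber $\AA^n$.

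At this point property (1) of the \motivicring\ applies directly: if $U\subset Z/G$ is a Zariski open on which the bundle trivializes, the scissor relation together with (1) gives the additivity needed to patch, and we conclude
\[
[(Z\times\AA^n)/G] = [Z/G]\cdot[\AA^n] = \LL^n[Z/G]\in\MC.
\]
The only real obstacle is justifying the Zariski local triviality of the associated bundle without smoothness assumptions on $Z$ or $Z/G$; this is handled by the local-freeness argument above, which works over an arbitrary base scheme.
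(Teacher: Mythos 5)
Your argument is correct and follows essentially the same route as the paper: identify $(Z\times\AA^n)/G\to Z/G$ as an \'etale vector bundle and conclude Zariski-local triviality. The paper simply cites Hilbert's Theorem 90 at that last step, whereas you unpack that citation into an explicit description of the sheaf of sections $\bigl(\pi_*\O_Z\otimes_\C V\bigr)^G$ together with faithfully flat descent of local freeness --- which is precisely the content of Hilbert 90 for $\GL_n$, so the two proofs are the same in substance.
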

\begin{proof} Let $\pi\colon (Z\times \AA^n)/G \to Z/G$ be the projection. 
By assumption, $\pi$ is \'etale locally trivial with fiber $\AA^n$, and with 
linear transition maps. Thus it is an \'etale vector bundle on $Z/G$. 
But then by Hilbert's Theorem 90, it is Zariski locally trivial. 
\end{proof} 

\subsection{Homomorphisms from the \motivicring}

The \motivicring admits a number of well-known ring homomorphisms.
Deligne's mixed Hodge structure on compactly supported cohomology of a 
variety~$X$ gives rise to the E-polynomial homomorphism
\[ E\colon K_0(\Var_\C)\to \Z[x,y]\]
defined on generators by
\[ E([X];x,y) = \sum_{p,q} x^p y^q \sum_{i} (-1)^i \dim H^{p,q}(H^i_c(X,\Q)).\]
This extends to a ring homomorphism
\[  E\colon \M_\C\to \Z[x,y,(xy)^{-\half }]\]
by defining 
\[  E(\LL ^n)= (xy)^{n}\]
for half-integers~$n$. 

The weight polynomial homomorphism
\[ W\colon \M_\C\to\Z[q^{\pm\half}],\] 
is defined by the specialization
\[
x=y=-q^{\half },\quad (xy)^{\half } = q^{\half }.
\]
This maps $\LL $ to $q$, and encodes the dimensions of the graded
quotients of the compactly supported cohomology of $X$ under the
weight filtration, disregarding the Hodge filtration.  For smooth
projective~$X$, $W([X]; q^\half)$ is simply the Poincar\'e polynomial
of $X$. Specializing further,
\[ \chi([X]) = W([X]; q^\half=-1) \]
defines the map
\[\chi\colon \M_\C\to \Z\]
of compactly supported Euler characteristic;
this agrees with the ordinary Euler characteristic. 

\subsection{Relative motivic weights} Given a reduced (but not
necessarily irreducible) variety~$S$, let $K_0(\Var_S)$ be the
$\Z$-module generated by isomorphism classes of (reduced)
$S$-varieties, under the scissor relation for $S$-varieties, and ring
structure whose multiplication is given by fiber product over
$S$. Elements of this ring will be denoted $[X]_S$.  A morphism
$f:S\to T$ induces a ring homomorphism $f^*\colon K_0(\Var_T)\to
K_0(\Var_S)$ given by fiber product. In particular, $K_0(\Var_S)$ is
always a $K_0(\Var_\C)$-module. Thus we can let
\[
\M_S = K_0(\Var_S)[\LL ^{-\half}], 
\]
an $\M_\C$-module. A morphism $f:S\to T$ induces a ring homomorphism
$f^*\colon\M_T\to\M_S$ by pullback, as well as a direct image
homomorphism $f_!\colon \M_S\to \M_T$ by composition, the latter a map
of $\M_T$-modules.

In the relative case, the E-polynomial, weight polynomial and Euler
characteristic specializations map to the K-group of variations of
mixed Hodge structures (or mixed Hodge modules), the K-group of mixed
sheaves, and the space of constructible functions, respectively.

\subsection{Equivariant motivic weights}

Let $G$ be a finite group. An action of $G$ on a variety $X$ is said
to be good, if every point of $X$ is contained in an affine
$G$-invariant open subset; all actions in this paper are going to be
good. 

We will have occasion to use two versions of equivariant rings of
motivic weights.
For a fixed variety $S$ with $G$-action, let $\tilde K^G_0(\Var_S)$ be
the K-group generated by $G$-varieties over $S$, modulo the
$G$-scissor relation. 
Let also $K^G_0(\Var_S)$ be the quotient of $\tilde K^G_0(\Var_S)$ by
the further relations
\begin{equation}\label{eqn: vector bundle with G action is trivial in equivariant K-group}
[V,G] = [\C ^{r}\times S]
\end{equation}
where $V\to S$ is any $G-$equivariant
vector bundle over $S$ of rank $r$ and $\C ^{r}\times S$ is the 
trivial rank $r$ bundle with trivial $G$-action. The affine $S$-line
$\AA^1\times S$ inherits a $G$-action and so defines elements $\LL
\in \tilde K^G_0(\Var_S)$ and $\LL \in K^G_0(\Var_S)$; 
we let $\tilde\M^G_S$ and $\M^G_S$ be the corresponding extensions by $\LL^{-\half}$.

If the $G$-action on $S$ is trivial, then product makes 
$\M^G_S$ and $\tilde \M^G_S$ into $\M_S$-algebras.
In this case, there is a map of $\M_S$-modules
\begin{equation}\label{eq: quotient map on rings} 
\pi_G\colon \tilde\M^{G}_S \to \M_S
\end{equation} 
given on generators by taking the orbit space. This operation is
clearly compatible with the module operations and scissor relation. In
general, this map does not respect the relations~\eqref{eqn: vector
bundle with G action is trivial in equivariant K-group}, so it does
not descend to $\M^{G}_S$.

As a variant of this construction, let $\hat\mu=\lim_{\leftarrow}
\mu_n$ be the group of roots of unity. A good $\hat\mu$-action on a
variety $X$ is one where $\hat\mu$ acts via a finite quotient and that
action is good.  Let $\M^\muhat_S=K_0^\muhat(\Var_S)[\LL^{-\half}]$ be the
corresponding $K$-group, incorporating 
the relations~\eqref{eqn: vector bundle with G action is trivial in equivariant K-group}.
The additive group $\M_S^\muhat$ can be endowed with an 
associative operation $\star$ using convolution
involving the classes of Fermat curves~\cite{Denef-Loeser-Igusa, Looijenga-motivic, KS_new}. 
This product agrees with the ordinary (direct) product on the subalgebra
$\M_S\subset\M_S^\muhat$ of classes with trivial $\muhat$-action, but not in general.

We will need the following statement below.

\begin{lemma} Let $A\in \MC$ be a general class, $n$ a positive 
integer. Then the class $A^n\in\MC$ can be given a structure of a 
class in $\tilde \M^{S_n}_\C$, the ring of $S_n$-equivariant 
motivic weights. \label{lemma: symm grp acts}
\end{lemma}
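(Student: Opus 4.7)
My plan is to build the $S_n$-equivariant lift of $A^n$ by first handling variety classes via the obvious permutation action, and then bootstrapping to general elements of $\MC$ through the multinomial expansion.

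The paradigm case will be $A=[X]$ for a single variety $X$. Here $X^n$ carries a natural $S_n$-action by permutation of factors, and the resulting class $[X^n]_{S_n}\in \tilde\M^{S_n}_\C$ projects under the forgetful map to $A^n=[X^n]\in\MC$. This will serve as the building block.

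For a general $A\in\MC$, I would write $A$ as a finite $\Z[\LL^{\pm 1/2}]$-linear combination $A=\sum_{i=1}^r c_i[X_i]$ of variety classes (possible because $\MC=K_0(\Var_\C)[\LL^{-1/2}]$ is additively generated by expressions $\LL^{k/2}\cdot[X]$), and expand multinomially:
\[
A^n = \sum_{\substack{n_1+\cdots+n_r=n \\ n_i\geq 0}} \binom{n}{n_1,\ldots,n_r}\, c_1^{n_1}\cdots c_r^{n_r}\, [X_1]^{n_1}\cdots[X_r]^{n_r}.
\]
Then I would lift each summand. The geometric factor $\binom{n}{n_1,\ldots,n_r}\prod[X_i]^{n_i}$ gets lifted to the induced $S_n$-variety $\mathrm{Ind}_{S_{n_1}\times\cdots\times S_{n_r}}^{S_n}\bigl(\prod X_i^{n_i}\bigr)$, where each factor $X_i^{n_i}$ carries the natural $S_{n_i}$ permutation action. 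The combinatorial identity makes this work: on forgetting the equivariance, the induced variety breaks into $[S_n:\prod S_{n_i}]=\binom{n}{n_1,\ldots,n_r}$ disjoint copies of $\prod X_i^{n_i}$, reproducing the multinomial term exactly. The scalar $c_1^{n_1}\cdots c_r^{n_r}\in\MC$ is then absorbed via the inclusion $\MC\hookrightarrow \tilde\M^{S_n}_\C$ sending a class to itself with trivial $S_n$-action.

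The main subtlety will not be computational but conceptual: the lift depends on the chosen presentation of $A$, so two scissor-equivalent representations may yield different $S_n$-equivariant structures on the same underlying $A^n$. Since the lemma only asserts the existence of \emph{some} $S_n$-equivariant structure (and indeed is only invoked later up to the specific applications needed), canonicity is not required, and the construction above always produces at least one such structure.
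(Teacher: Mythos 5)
Your proof is correct and takes essentially the same approach as the paper's. The paper writes $A$, up to a power of $\LL^{1/2}$, as a difference $B-C$ of two variety classes and applies the binomial theorem, with the $\binom{n}{i}$ disjoint copies of $B^{n-i}C^{i}$ carrying the permutation action that comes from labeling the $n$ factors; your multinomial expansion over an arbitrary $\Z[\LL^{\pm 1/2}]$-combination of variety classes and the induced $S_n$-variety on each term are precisely the $r$-term generalization of that same mechanism, with the scalar coefficients absorbed trivially just as the paper absorbs the signs and $\LL^{1/2}$-powers.
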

\begin{proof} Up to powers of $\LL^{\pm\half}$, we can 
write $A=B-C$ where $B, C$ are classes represented by 
actual varieties. Then by the binomial theorem,
\[ A^n = \sum_{i=0}^n (-1)^i X_i,\]
where $X_i$ is the variety which consists of $n \choose i$ disjoint copies of 
the variety $B^{n-i} C^i$. We claim that all these varieties $X_i$ carry 
geometric $S_n$-actions. Label the $B$'s and $C$'s in the 
expansion of $(B-C)^n$ with the labels $1, \ldots, n$, depending on which 
bracket they come from. Then every term will carry exactly one instance of 
each label. The group~$S_n$ acts by interchanging the labels, and does not 
change the number of $B$'s and $C$'s in a monomial. Thus each element 
of $S_n$ defines a map $X_i\to X_i$, i.e. a geometric automorphism of $X_i$. 
\end{proof}

\subsection{Power structure on the \motivicring}\label{subsec: power
str} Recall that a power structure on a ring $R$ is a map
\begin{align*} (1 + t R[[t]])\times R & \to 1  + t R[[t]]\\
(A(t),m) &\mapsto A(t)^m
\end{align*}
satisfying $A(t)^0 = 1$, $A(t)^1=A(t)$, 
$A(t)^{m+n}=A(t)^mA(t)^n$, $A(t)^{mn}=(A(t)^m)^n$,
$A(t)^m B(t)^m = (A(t)B(t))^m$, as well as $(1+t)^m = 1+mt + O(t^2)$. 

\begin{theorem}[Gusein-Zade et
al~\cite{Gusein-Zade-Luengo-Melle-Hernandez-power}, cf. Getzler
\cite{Getzler-mixed-hodge}] There exists a power structure on the
Grothen\-dieck ring $K_0(\Var_\C)$, defined uniquely by the property
that for a variety~$X$,
\[ (1-t)^{-[X]} = \sum_{n=0}^{\infty } [\Sym^n X]t^n
\]
is the generating function of symmetric products of $X$, its motivic 
zeta function. 
\label{theorem power structure}
\end{theorem}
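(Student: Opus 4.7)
The plan follows Gusein-Zade–Luengo–Melle-Hernández: construct the power structure by an explicit formula, verify the six axioms, and read off the symmetric-product specialization. For a variety $X$ and a series $A(t) = 1 + \sum_{i\geq 1} [A_i] t^i$ with effective coefficients, define
$$A(t)^{[X]} = 1 + \sum_{n \geq 1}\left(\sum_{\{k_i\}:\,\sum_i i k_i = n}\left[\Bigl(\bigl(X^{\sum_i k_i}\setminus \Delta\bigr)\times\prod_i A_i^{k_i}\Bigr)\Big/\prod_i S_{k_i}\right]\right) t^n,$$
where $\Delta \subset X^{\sum_i k_i}$ is the big diagonal and each $S_{k_i}$ acts simultaneously by permuting the corresponding $k_i$ factors of $X^{\sum_i k_i}$ and the $k_i$ factors of $A_i^{k_i}$. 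Then extend to arbitrary classes in $K_0(\Var_\C)$ by writing $m = [X] - [Y]$ and setting $A(t)^m = A(t)^{[X]}/A(t)^{[Y]}$.

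First I would check that the formula depends only on the classes $[X]$ and $[A_i]$: additivity in each $[A_i]$ is immediate, while additivity in $[X]$ comes from stratifying configurations on a disjoint union $X = X' \sqcup X''$ by which summand each configuration point belongs to — this same stratification yields the axiom $A(t)^{m+n} = A(t)^m A(t)^n$. The axioms $A(t)^0 = 1$, $A(t)^1 = A(t)$, and $(1+t)^m = 1 + m t + O(t^2)$ are then immediate from the definition. The remaining two, $(A B)^m = A^m B^m$ and $(A^m)^n = A^{m n}$, are proved by exhibiting bijective morphisms between stratified configuration-type schemes on the two sides and invoking property (2) of the excerpt (bijective morphisms give equal motivic classes).

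Specializing at $A(t) = (1-t)^{-1}$, so every $[A_i]$ is the class of a point, reduces the inner sum to $\sum_{\sum i k_i = n}[(X^{\sum k_i}\setminus \Delta)/\prod_i S_{k_i}]$, which is precisely the stratification of $\Sym^n X$ by the multiplicity profile of the underlying effective $0$-cycle; hence the $t^n$-coefficient equals $[\Sym^n X]$ and the desired identity holds. Uniqueness of a power structure satisfying this identity then follows by a formal inductive argument on the degree in $t$: the axioms combined with the symmetric-product formula force the value of $A(t)^{[X]}$ coefficient by coefficient.

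The main technical obstacle is the iterated-power axiom $(A^m)^n = A^{m n}$: both sides expand into large sums of classes of quotients of configuration-type schemes indexed by nested multiplicity profiles, and one must exhibit an explicit geometric bijection matching the two stratifications. Getting the combinatorics of the indexing — partitions of partitions with their labellings and symmetric-group actions — to line up cleanly, so that property (2) of the motivic ring can be applied, is where the real bookkeeping lives.
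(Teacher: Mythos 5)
The paper states this theorem as a citation to Gusein-Zade--Luengo--Melle-Hern\'andez (cf.\ Getzler) and does not prove it; the text that follows in the paper only recalls the defining formula~\eqref{def power structure}. Your sketch faithfully reproduces the construction and proof strategy from the cited reference: the explicit configuration-space formula for $A(t)^{[X]}$, the check that it descends to $K_0(\Var_\C)$ by stratifying over a decomposition $X = X'\sqcup X''$, the verification of the multiplicativity and composition axioms via bijective morphisms of stratified configuration schemes, the specialization at $A(t)=(1-t)^{-1}$ recovering the multiplicity-type stratification of $\Sym^n X$, and uniqueness.

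One point worth sharpening: the six power-structure axioms together with the value of $(1-t)^{-[X]}$ do not by themselves determine $A(t)^m$ for arbitrary $A(t)$, since none of the axioms give control of $A(t)^m$ in terms of truncations of $A(t)$ (nor a substitution rule $t\mapsto t^i$). The cited reference resolves this by restricting to \emph{finitely determined} power structures --- those for which each $N$-jet of $A(t)^m$ depends only on some $M$-jet of $A(t)$ --- and proving uniqueness within that class; it is this finite-determinacy hypothesis that makes the ``coefficient by coefficient'' induction you invoke actually close. Your sketch assumes this implicitly. With that caveat noted, the argument is sound and matches the literature the paper relies on.
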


Since we will need it below, we recall the definition. 
Let $A(t)=1+\sum_{i\geq 1} A_i t^i$ be a series with $A_i\in K_0(\Var_\C)$. 
Then for $[X]\in K_0(\Var_\C)$ a class represented by a variety~$X$, 
the definition of~\cite{Gusein-Zade-Luengo-Melle-Hernandez-power} reads

\begin{equation}A(t)^{[X]} = 1 + \sum_{\alpha}\pi_{G_\alpha}\left[\left(\prod_iX^{\alpha_i}\setminus \Delta\right)\cdot \left(\prod_i A_i^{\alpha_i}\right)\right] t^{|\alpha|}.
\label{def power structure}
\end{equation}
Here the summation runs over all partitions $\alpha$; 
for a partition $\alpha$, write $\alpha_i$ for the number of parts of 
size~$i$, and let $G_\alpha=\prod_i S_{\alpha_i}$ denote the standard product 
of symmetric groups. $\Delta$ denotes the big diagonal in any product 
of copies of the variety $X$. By Lemma~\ref{lemma: symm grp acts}, the product 
$\left(\prod_iX^{\alpha_i}\setminus \Delta\right)\times \prod_i A_i^{\alpha_i}$ 
can be represented by a class in $K_0^{G_\alpha}(\Var_\C)$, and
the map $\pi_{G_\alpha}$ is the quotient map~\eqref{eq: quotient map on rings}. 

Note that if we replace the coefficients $A_i$ by $\LL^{c_i} A_i$ for positive
integers $c_i$, then by Lemma~\ref{lem} above, the individual terms in the 
sum change as
\[\begin{array}{c} \displaystyle\pi_{G_\alpha}\left[\left(\prod_iX^{\alpha_i}\setminus \Delta\right)\times \prod_i (\LL^{c_i}A_i)^{\alpha_i}\right]
= \\ \ \ \ \ \ \ \ \ \ \ \ \ \ = \displaystyle\LL^{\sum_i c_i\alpha_i}\pi_{G_\alpha}\left[\left(\prod_iX^{\alpha_i}\setminus \Delta\right)\times \prod_i A_i^{\alpha_i}\right],
\end{array}
\]
since the action of $G_\alpha$ on the $\LL^{\sum_i c_i\alpha_i}$ factor comes 
from a product of permutation actions and is hence linear. We thus get 
the substitution rule
\[ A(\LL^c t)^{[X]} = A(t)^{[X]}\Big|_{t\mapsto \LL^c t}
\]
for positive integer $c$. We extend the definition~\eqref{def power structure}
to allow coefficients $A_i$ which are from $\MC$, 
by the formula 
\[\begin{array}{c} \displaystyle\pi_{G_\alpha}\left[\left(\prod_iX^{\alpha_i}\setminus \Delta\right)\times \prod_i ((-\LL^\half)^{c_i}A_i)^{\alpha_i}\right]
= \\ \ \ \ \ \ \ \ \ \ \ \ \ \ = \displaystyle (-\LL^\half)^{\sum_i c_i\alpha_i}\pi_{G_\alpha}\left[\left(\prod_iX^{\alpha_i}\setminus \Delta\right)\times \prod_i A_i^{\alpha_i}\right]
\end{array}\]
for integers $c_i$ (see Remark~\ref{rem: lambda ring homo} for the reason for
the appearance of signs here). 
This implies the substitution rule
\[ A\left((-\LL^\half)^n t\right)^{[X]} = A(t)^{[X]}\Big|_{t\mapsto (-\LL^\half)^n t}
\]
for integers $n$.
We also extend the power structure to exponents from the
ring~$\MC$ by defining
\[(1-t)^{-\left(-\LL ^{\half } \right)^{n}[X]} = \left(1-\left(-\LL^{\half } \right)^{n} t \right)^{-[X]}\]
for all $n\in \Z $; as in Theorem~\ref{theorem power structure}, this 
determines a unique extension of the power structure. 

Finally, still
following~\cite{Gusein-Zade-Luengo-Melle-Hernandez-power,
Getzler-mixed-hodge}, introduce the map
\[ 
\Exp\colon t\MC[[t]]\to 1 +t\MC[[t]]
\]
by
\[ 
\Exp\sum_{n= 1}^{\infty }[A_n]t^n = \prod_{n\geq 1} (1-t^n)^{-[A_n]}.
\]
Note that the variable~$t$ plays a special role here. 

Note that the above equations imply the following substitution 
rule:
\begin{equation}\label{eqn: substitution rule for Exp}
\Exp (A (t))|_{t\mapsto \left(-\LL ^{\half } \right)^{n} t} =\Exp
\left(A\left(\left(-\LL ^{\half } \right)^{n}t \right) \right).
\end{equation}

\begin{example} It is easy to check that the generating series $C(t)$
of the motivic weight of pairs of commuting
matrices~\eqref{eq_Cseries_result} can be written as a motivic
exponential (cf. \cite[Prop.~7]{KS_new}):
\[
C(t) = \Exp \left(\frac{\LL ^{2}}{\LL -1}\,\frac{t}{1-t} \right).
\]
\end{example}

\begin{remark}\label{rem: lambda ring homo}
The existence a power structure on $K_{0} (\Var _{\C })$ is closely
related to the fact that $K_{0} (\Var _{\C })$ has the structure of a
pre-$\lambda $-ring where the operations $\sigma _{n}$ are
characterized by $\sigma _{n} (X) = [\Sym ^{n} (X)]$ (see
\cite{Getzler-mixed-hodge}). In order to extend the power structure to~$\MC$
so that the Euler characteristic homomorphism
respects the power structure, we must have\footnote{We thank Sven Meinhardt for calling
our attention to this sign issue.} $\sigma
_{n} \left(-\LL ^{\half } \right) = \left(-\LL ^{\half } \right)^{n}$
which explains the signs in the formulae above. 
\end{remark}

\subsection{Motivic nearby and vanishing cycles}

Let 
\[
f\colon X\to\C
\]
be a regular function on a smooth
variety~$X$, and let $X_0=f^{-1}(0)$ be the central fiber. 

Using arc spaces, Denef and Loeser
\cite[\S~3]{Denef-Loeser-geometry},\cite[\S~5]{Looijenga-motivic}
define $[\psi _{f}]_{X_{0}}\in \M _{X_{0}}^{\muhat }$, the relative
motivic nearby cycle of $f$. Using motivic integration,
Denef--Loeser give an explicit formula for $[\psi _{f}]$ in terms of an
embedded resolution of $f$. We give this formula in detail in
Appendix~\ref{appendix: nearby fiber proof}.

Let
\[ 
[\phi_f]_{X_0} = [\psi_f]_{X_0} - [X_0]_{X_0}\in \M^\muhat_{X_0}
\]
be the relative motivic vanishing cycle of~$f$. It follows directly
from the definitions that over the smooth locus of the central fiber,
the classes $[\psi_f]_{X_0}$ and $[X_0]_{X_0}$ coincide, so the
motivic difference $[\phi_f]_{X_0}$ is a relative class
$[\phi_f]_{\Sing(X_0)}$ over the singular locus of $X_0$. The latter
is exactly the degeneracy locus $Z\subset X$, the subscheme of $X_0$
corresponding to the subscheme given by the equations $\{df=0\}$. We
will denote by $[\psi_f], [\phi_f]\in\M^{\muhat }_{\C}$ the absolute
motivic nearby and vanishing cycles, the images of the relative
classes under pushforward to the point.

We next recall the motivic Thom--Sebastiani theorem. Given two regular
functions $f\colon X\to\C$ and $g\colon Y\to\C$ on smooth varieties
$X, Y$, define the function $f\oplus g\colon X\times Y\to\C$ by
\[ 
(f\oplus g)(x,y)=f(x)+g(y).
\]

\begin{theorem} {\rm (Denef--Loeser~\cite{Denef-Loeser-Thom}, 
Looijenga \cite{Looijenga-motivic})} 
Let $f,g$ be non-constant regular functions 
on smooth varieties $X,Y$, and let $X_0, Y_0$ be their zero fibers. Let
\[ 
i\colon X_0\times Y_0 \to (X\times Y)_0
\]
denote the natural inclusion into the zero fiber of $f\oplus g$. Then
\[ 
i^*[-\phi_{f\oplus g}]_{X_{0}\times Y_{0}} = p_X^*[-\phi_f]_{X_{0}}
\star p_Y^*[-\phi_g]_{Y_{0}}\in\M^\muhat_{X_0\times Y_0}
\]
where $p_X, p_Y$ are the projections from $X_0\times Y_0$ to the two factors. 
\label{thm_thom_seb}
\end{theorem}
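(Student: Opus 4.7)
The plan is to work at the level of motivic zeta functions and then pass to the limit. Recall that Denef--Loeser define
\[ Z_f(T) = \sum_{n\geq 1} \LL^{-n\dim X}\,[\X_n(f)]_{X_0}\,T^n \in \M^\muhat_{X_0}[[T]],\]
where $\X_n(f)$ is the space of $n$-jets $\gamma\colon \Spec \C[t]/(t^{n+1})\to X$ based in $X_0$ with $f\circ\gamma \equiv t^n \pmod{t^{n+1}}$, equipped with the $\mu_n$-action $\lambda\cdot \gamma(t) = \gamma(\lambda t)$. The nearby cycle $[\psi_f]_{X_0}$ is extracted from $Z_f(T)$ via a rational limit. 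The strategy is to prove the analogous identity for zeta functions and then take limits.

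The core geometric input is a decomposition of the arcs computing $Z_{f\oplus g}$. For $n\geq 1$, an $n$-jet $(\gamma,\delta)$ on $X\times Y$ based in $X_0\times Y_0$ with $(f\oplus g)(\gamma,\delta)\equiv t^n$ satisfies $f\gamma = at^{n_1}+\cdots$ and $g\delta = bt^{n_2}+\cdots$ for some $n_1,n_2\geq 1$. The locus $n_1=n_2=n$ fibers, after extracting leading coefficients, over the affine line $\{a+b=1\}\setminus\{0,1\}$, and the mixed cases $n_1\neq n_2$ (which force $\min(n_1,n_2)=n$) contribute separately. Pushing the $\mu_n$-action through the leading coefficient map is exactly the geometric origin of the Fermat-curve correction built into the convolution $\star$: the definition of $\star$ on $\M^\muhat$ via classes of Fermat covers is designed so that
\[ \LL^{-n(\dim X+\dim Y)}\bigl[\X_n(f\oplus g)^{\mathrm{diag}}\bigr] = p_X^*\bigl(\LL^{-n\dim X}[\X_n(f)]\bigr) \star p_Y^*\bigl(\LL^{-n\dim Y}[\X_n(g)]\bigr),\]
where the ``diagonal'' piece is the locus with $n_1=n_2=n$. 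This is the key identity to verify in detail.

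Next I would analyze the off-diagonal pieces $n_1 \neq n_2$. These are parametrized by lower-order jets on one factor together with a jet achieving full order $n$ on the other factor. Summing the resulting contributions over $n$ produces exactly the ``nearby-vs-vanishing'' correction $[X_0]\star [Y_0]$-type boundary: the off-diagonal terms cancel against $i^*[X_0\times Y_0]_{X_0\times Y_0}$ in the translation from $[\psi_{f\oplus g}]$ to $[\phi_{f\oplus g}]$. When one subtracts $[X_0\times Y_0]$ from both sides and uses the multiplicativity $[X_0\times Y_0]=p_X^*[X_0]\star p_Y^*[Y_0]$ (which holds because these classes carry trivial $\muhat$-action, so $\star$ agrees with ordinary product), the off-diagonal contributions match precisely, leaving the clean vanishing-cycle identity. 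Passing to the limit $T\to\infty$ is then formal.

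The main obstacle is the equivariant bookkeeping in the diagonal step: one has to check that the $\mu_n$-action on $\X_n(f\oplus g)^{\mathrm{diag}}$ coming from reparametrization of arcs coincides, after fibering over $\{a+b=1\}$, with the $\mu_n$-action produced by the Fermat-curve-based definition of $\star$ applied to the $\mu_n$-equivariant classes $[\X_n(f)]$ and $[\X_n(g)]$. This is a direct but delicate computation, and indeed it is the reason the convolution $\star$ was introduced in the form it takes in \cite{Denef-Loeser-Igusa, Looijenga-motivic}. Once this equivariant matching is verified stratum by stratum, the theorem follows by assembling the pieces and invoking the Denef--Loeser limit formula relating $Z_f(T)$ to $[\psi_f]$.
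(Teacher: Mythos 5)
The paper does not prove this theorem; it is quoted as a black box from Denef--Loeser and Looijenga, so there is no in-paper proof to compare against. Your sketch does correctly identify the \emph{strategy} of the original proofs: stratify the jet spaces computing the motivic zeta function of $f\oplus g$ by the orders of vanishing $(n_1,n_2)$ of $f\circ\gamma$ and $g\circ\delta$, recognize the Fermat-curve convolution in the leading-coefficient fibration, and then pass to the rational limit.

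However, there is a genuine gap in your stratification. You divide the jets in $\X_n(f\oplus g)$ into the ``diagonal'' locus $n_1=n_2=n$ (where $a+b=1$) and the ``mixed'' loci $n_1\neq n_2$ (forcing $\min(n_1,n_2)=n$). You have omitted the \emph{cancellation stratum} $n_1=n_2=m<n$, where the leading coefficients satisfy $a+b=0$ and the coefficients of $f\circ\gamma+g\circ\delta$ continue to cancel through orders $m,m+1,\dots,n-1$ before producing a unit at order $n$. This stratum has nonzero motivic measure and is precisely what makes the Thom--Sebastiani theorem delicate: handling it cleanly is the central technical content of \cite{Denef-Loeser-Thom}, where a careful change of variables and the introduction of modified zeta functions are used to avoid an explicit enumeration of the cancellation orders. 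Without this stratum your bookkeeping cannot close, because the sum of the diagonal and off-diagonal pieces you describe does not reproduce $[\psi_{f\oplus g}]$.

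A secondary imprecision: you say the off-diagonal terms ``cancel against $i^*[X_0\times Y_0]$.'' Tracing the expansion $[X_0\times Y_0]-[\psi_{f\oplus g}] = ([X_0]-[\psi_f])\star([Y_0]-[\psi_g])$, the off-diagonal jet contributions should instead limit to the cross terms $[\psi_f\times Y_0]$ and $[X_0\times\psi_g]$ (one factor is an arc of exact contact $n$, the other is a long jet landing in the central fiber); the term $[X_0]\star[Y_0]$ is the ordinary product and appears on its own. These signs and terms need to be kept straight for the final identity to fall out, and the cancellation stratum must be folded into this accounting. If you intend to reconstruct the Denef--Loeser argument, the modified zeta function and its ``elementary'' comparison lemma are the right scaffolding for doing so.
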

\begin{remark}\label{rem: 1-[2pt,mu2] is a sqrt of L}
Consider the functions $f (x)=x^{2}$ and $g (y)=y^{2}$. Restricting to
the origins in $\C $ and $\C ^{2}$, Theorem~\ref{thm_thom_seb} reads
\[
(-\phi _{x^{2}}) \star (-\phi _{y^{2}}) = -\phi _{x^{2}+y^{2}} \in \M
^{\muhat }_{\C }.
\]
Direct computation (using for example~\eqref{eqn:
Denef-Loeser defn of motive of nearby cycles}) yields
\begin{align*}
-\phi _{x^{2}+y^{2}}&= \LL &-\phi _{x^{2}} &=-\phi
_{y^{2}}=1-[2\point ,\mu _{2}]
\end{align*}
where $[2\point ,\mu _{2}]$ is the space of 2 points, with the $\mu
_{2}$-action which swaps the points. We see that rather than adjoining
$\LL ^{\half }$ formally to $\M ^{\muhat }_{\C }$, we could have taken
\begin{equation}\label{eqn: sqrt (L) = 1-[2pt,mu2])}
\LL ^{\half } = 1-[2\point ,\mu _{2}]
\end{equation}
(cf. \cite[Remark 19]{Kontsevich-Soibelman}). Indeed, imposing the
above equation as a relation in $\M ^{\muhat }_{\C }$ has some
desirable consequences such as making the relative virtual motive of a
smooth variety canonical at each point; see Remark~\ref{rem: the case
when Z= (df=0) is smooth}.\footnote{We thank Sheldon Katz for
discussions on this issue.}
\end{remark}

\subsection{Torus-equivariant families}

We wish to study regular functions $f\colon X\to\C$ on smooth
varieties~$X$ with the following equivariance property\footnote{We
thank Patrick Brosnan and J\"org Sch\"urmann for very helpful
correspondence on this subject.}\label{subsec:triv}.  We assume
there exists an action of a connected complex torus $T$ on $X$ such
that $f$ is $T$-equivariant with respect to a primitive character
$\chi :T\to \C ^{*}$. That is, for all $t\in T$ and $x\in X$, we have
$f (tx)=\chi (t)f (x)$.

Assuming the existence of such a $T$-action, the family defined by $f$
is trivial away from the central fiber. Indeed, since $\chi $ is
primitive, there exists a 1-parameter subgroup $\C ^{*}\subset T$ such
that $\chi $ is an isomorphism restricted to $\C ^{*}$.  Let
$X_{1}=f^{-1} (1)$, then the map $X_{1}\times \C ^{*}\to X-X_{0}$
given by $(x,\lambda )\mapsto \lambda \cdot x$ has inverse
\[
x\mapsto \left(\frac{1}{f (x)}\cdot x,f (x) \right)
\]
and thus defines an isomorphism $X_{1}\times \C ^{*}\cong
X-X_{0}$.

\begin{proposition}\label{prop: if f is proper and C* equivariant then
vanishing cycle is X1-X0} Assume that the regular function $f\colon
X\to\C$ on a smooth variety~$X$ has a $T$-action as above, and assume
that $f$ is proper.  Then the absolute motivic vanishing cycle 
$[\phi_f]$ of~$f$ lies in the subring
$\M_\C\subset\M_\C^\muhat$. Moreover, this class can be computed as
the motivic difference
\[  [\phi_f]=[X_1]-[X_0]\in\M_\C
\]
of the general and central fibers of~$f$.
\end{proposition}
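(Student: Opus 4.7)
The plan is to combine an equivariant embedded resolution of the pair $(X, X_0)$ with the Denef--Loeser formula for the motivic nearby cycle, using the $T$-equivariance to kill the monodromy. By functorial (hence equivariant) resolution of singularities, I would choose a $T$-equivariant log resolution $\pi \colon Y \to X$ of $(X, X_0)$, so that $(f \circ \pi)^{-1}(0) = \sum_{i \in J} N_i E_i$ is a simple normal crossings divisor with $T$-invariant smooth components $E_i$ and $\pi$ an isomorphism over $X \setminus X_0$. The Denef--Loeser formula (recalled in Appendix~\ref{appendix: nearby fiber proof}) then expresses $[\psi_f]$ as a weighted sum
\[
[\psi_f] = \sum_{\emptyset \neq I \subseteq J} (\LL - 1)^{|I|-1} [\tilde E_I^{\circ}]_{\muhat},
\]
where $\tilde E_I^{\circ} \to E_I^{\circ}$ is a canonical unramified cyclic cover of degree $N_I := \gcd_{i \in I} N_i$ carrying a $\muhat$-action.

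The second step is to use the $T$-equivariance to trivialize each such cover. Restricting to the $1$-parameter subgroup $\C^{*} \subset T$ on which $\chi$ becomes the identity, each $E_i$ is $\C^{*}$-invariant, and because $\chi$ is primitive the $\C^{*}$-weight on the normal line bundle to $E_i$ is precisely $N_i$. Along $E_I^{\circ}$ the $\C^{*}$-action therefore produces a canonical $N_I$-th root of the local defining function of $f \circ \pi$, giving a $\muhat$-equivariant splitting $\tilde E_I^{\circ} \cong \mu_{N_I} \times E_I^{\circ}$ with trivial $\muhat$-action on the second factor. In particular $[\psi_f]$ lies in $\M_\C \subset \M_\C^{\muhat}$. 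To identify the resulting expression with $[X_1]$, note that the $T$-equivariant isomorphism $X \setminus X_0 \cong X_1 \times \C^{*}$ lifts through $\pi$ to $Y \setminus (f \circ \pi)^{-1}(0) \cong Y_1 \times \C^{*}$, where $Y_1 := (f \circ \pi)^{-1}(1)$. The normal crossings stratification of $(f \circ \pi)^{-1}(0)$ induces, via the $\C^{*}$-flow, a matching stratification of $Y_1$ whose pieces are finite covers of the $E_I^{\circ}$ of degrees exactly those appearing in the Denef--Loeser sum; a cut-and-paste computation in $\M_\C$ then gives $[X_1] = [Y_1] = \sum_I (\LL - 1)^{|I|-1} [\tilde E_I^{\circ}]$, whence $[\psi_f] = [X_1]$ and $[\phi_f] = [X_1] - [X_0]$.

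The hardest step will be the second: verifying that the $T$-equivariance actually trivializes the $\muhat$-cover in the precise sense required by the arc-space construction of $[\psi_f]$, and that the stratification of $Y_1$ induced by the $\C^{*}$-flow matches the strata appearing in the Denef--Loeser formula with the correct multiplicities. The properness assumption on $f$ is used to ensure the resolution can be constructed globally and that the various stratifications behave well under proper pushforward to $X_0$ and then to the point.
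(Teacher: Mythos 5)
Your route is genuinely different from the paper's own proof of this proposition, but it is the paper's route for a \emph{different} (and harder) result. The paper proves the proper case conceptually: using the $T$-trivialization $X\setminus X_0\cong X_1\times\C^*$, it resolves the birational correspondence $X\dashrightarrow X_1\times\C$ by a smooth variety $Z$ that is proper over both $X$ and $X_1\times\C$, and then applies Bittner's compatibility of $\psi$ with proper pushforward to get $[\psi_f]=[\psi_p]=[X_1]$, where $p$ is the trivial family $X_1\times\C\to\C$; properness of $f$ enters precisely to make $Z$ proper over $\C$. What you propose --- an equivariant log resolution, the Denef--Loeser formula, and then the torus action to kill the monodromy and compute the sum --- is instead the strategy the paper reserves for Appendix~\ref{appendix: nearby fiber proof}, where it proves the strictly more general circle-compact version (Proposition~\ref{prop: if f is C* equiv and circle compact, then vanishing cycle is X1-X0}). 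Note in particular that if this route went through you would not need properness at all; your final remark that properness is ``used to ensure the resolution can be constructed globally'' is not where it is actually needed in any version of the argument.

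As written, however, the argument has substantive gaps. (a) The claim that ``the $\C^*$-weight on the normal bundle to $E_i$ is precisely $N_i$'' is false: equivariance of $\tilde f=f\circ\pi$ forces only the weighted relation $\sum_{i\in I(F)} b_i N_i=l>0$ at a fixed component $F\subset\bigcap_{i\in I(F)}E_i$, where $b_i=\mathrm{wt}(N_{E_i/Y}|_F)$; the individual $b_i$ are not determined, and the appendix needs (and proves) only that some $b_i>0$. (b) You do not actually need a geometric $\muhat$-equivariant splitting $\tilde E_I^\circ\cong\mu_{m_I}\times E_I^\circ$; the appendix's Lemma~\ref{lemma cyclic subgroup} gives the weaker but sufficient motivic identity $[\tilde E_I^\circ,\mu_{m_I}]=[E_I^\circ]$, because $\mu_{m_I}$ sits inside a connected torus $\tilde T$ acting on $\tilde E_I^\circ$, and the relation~\eqref{eqn: vector bundle with G action is trivial in equivariant K-group} then applies after a \BiBi stratification. (c) Most seriously, the claimed ``matching stratification of $Y_1$ whose pieces are finite covers of the $E_I^\circ$'' does not exist. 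Already for $f=xy$ on $\C^2$ one has $[X_1]=\LL-1$ while $[E_1^\circ]+[E_2^\circ]+[E_{12}^\circ]=2(\LL-1)+1$; the identity $[X_1]=\sum_{I\neq\emptyset}(\LL-1)^{|I|-1}[\tilde E_I^\circ]$ involves signs and is equivalent, after inclusion--exclusion, to the vanishing $\sum_A(-\LL)^{|A|}[E_A]=0$. That cancellation is the real content, and the appendix establishes it by a \BiBi decomposition of each closed stratum $E_A$ together with a careful bookkeeping of the normal weights at fixed components; nothing in your ``cut-and-paste'' sketch gets at this.
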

\proof Using the trivialization of the family discussed above, there
is a diagram
\[\begin{diagram}[height=1.8em,width=1.8em,nohug] X && \rDashto && X_1\times\C\\ & \rdTo_{ f\,\,} && \ldTo_{p} &\\  && \C&& \end{diagram}\]
with the birational map being an isomorphism over $\C^*$; here~$X_t$
denotes the fiber of~$f$ over $t\in\C$, and~$p$
denotes the projection to the second factor.

The fiber product~$W$ of $f$ and $p$ is proper over $\C$. Let $\bar Z$ 
be the irreducible component of the closure of the graph $\Gamma_g$ which maps
dominantly to $\C$; $\bar Z$ is proper over the fiber product $W$ 
so proper and birational over $X$ and $X_1\times\C$. 
Let $Z$ be a desingularization of $\bar Z$. We get a diagram
\[\begin{diagram}[height=1.8em,width=1.8em,nohug] && Z &&\\ & \ldTo^{g} && \rdTo^{h} \\ X && \rDashto && X_1\times\C\\ & \rdTo_{f\,\,} && \ldTo_{p} &\\  && \C&& \end{diagram} \]
with $g, h$ proper maps. 

Denote the composite $f\circ g=p_2\circ h$ by~$k$. 
On the central fibers, we get a diagram
\[ \begin{diagram}[height=1.8em,width=1.8em,nohug] && Z_{0} &&\\ & \ldTo^{g_{0}} && \rdTo^{h_{0}} \\  X_{0} &&  && X_1\\ & \rdTo_{f_{0}} && \ldTo_{p_{0}} &\\ && \{0 \}&& \end{diagram} \]
since the central fiber of the family~$p$ is $X_1$; 
of course the central fibers are no longer birational necessarily.

By \cite[Rem~2.7]{Bittner-motivic}, for the motivic relative nearby
cycles,
\[ [\psi_{f}]_{X_0} = g_{0!}[\psi_{k}]_{Z_0}\in\M_{X_0}^\muhat, 
\]
and 
\[ [\psi_{p}]_{X_1} = h_{0!}[\psi_{k}]_{Z_0}\in\M_{X_1}^\muhat.
\]
Thus, the absolute motivic nearby cycle of $f$ is given by
\[ 
[\psi_{f}] = f_{0!} g_{0!}[\psi_{k}]_{Z_0}= k_{0!}[\psi_{k}]_{Z_0} =
p_{0!} [\psi_{p}]_{X_1}\in\M_\C^\muhat.
\]
But $p$ is an algebraically trivial proper family over $\C$, so its
motivic nearby cycle is the class of its central fiber with trivial
monodromy.  So the absolute motivic nearby cycle of~$f$ is
\[ 
[\psi_{f}] = p_{0!} [ X_1]_{X_1} = [ X_1] \in\M_\C\subset \M_\C^\muhat.
\]
Finally by definition,
\[ [\phi_f]=[\psi_f]-[X_0], 
\]
with $X_0$ carrying the trivial $\muhat$-action. The proof is complete.
\endproof

In our examples, our~$f$ will not be proper. To weaken this
assumption, we say that an action of $\C^*$ on a variety~$V$ is
\emph{circle compact}, if the fixed point set $V^{\C^{*}}$ is compact
and moreover, for all $v\in V$, the limit $\lim_{\lambda\to
0}\lambda \cdot y$ exists. We use the following variant of
Proposition~\ref{prop: if f is proper and C* equivariant then
vanishing cycle is X1-X0}.

\begin{proposition}\label{prop: if f is C* equiv and circle compact, then vanishing cycle is X1-X0}
Let $f:X\to \C $ be a regular function on a smooth quasi-projective
complex variety. Suppose that $T$ is a connected complex torus with a
linearized action on $X$ such that $f$ is $T$-equivariant with respect
to a primitive character $\chi $, i.e. $f (tx)=\chi (t)f (x)$ for all
$t\in T$ $x\in X$. Moreover, suppose that there exists $\C ^{*}\subset
T$ such that the induced $\C ^{*}$-action on $X$ is circle
compact. Then the absolute motivic vanishing cycle $[\phi_f]$ of~$f$
lies in the subring $\M_\C\subset\M_\C^\muhat$, and it
can be expressed as the motivic difference
\[  
[\phi_f]=[X_1]-[X_0]\in\M_\C
\]
of the general and central fibers of~$f$.
\end{proposition}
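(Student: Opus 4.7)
The plan is to reduce to the proper case, Proposition~\ref{prop: if f is proper and C* equivariant then vanishing cycle is X1-X0}, by embedding $(X,f)$ into a $T$-equivariant proper model and then accounting for the resulting boundary using the circle-compact $\C^*$-action.

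First I would apply Sumihiro's theorem, which applies because $X$ is quasi-projective with a linearized $T$-action, together with $T$-equivariant resolution of singularities to obtain a smooth projective $T$-equivariant compactification $X\hookrightarrow\bar X$. Since the rational map $\bar X \dashrightarrow \PP^1$ extending $f$ has $T$-invariant indeterminacy locus, further $T$-equivariant blowups produce a $T$-equivariant morphism $\bar f : \bar X \to \PP^1$. Setting $\tilde X := \bar f^{-1}(\C)$ and $\tilde f := \bar f|_{\tilde X}$ yields a smooth quasi-projective $T$-variety $\tilde X$ containing $X$ as a $T$-invariant open subvariety, equipped with a proper $T$-equivariant morphism $\tilde f : \tilde X \to \C$ that extends $f$ and is still equivariant with respect to the primitive character $\chi$. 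Proposition~\ref{prop: if f is proper and C* equivariant then vanishing cycle is X1-X0} applied to $\tilde f$ then gives
\[
[\phi_{\tilde f}] = [\tilde X_1] - [\tilde X_0] \in \M_\C.
\]

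Next I would compare $[\phi_{\tilde f}]$ with $[\phi_f]$ along the $T$-invariant closed boundary $B := \tilde X \setminus X$, setting $B_i := B \cap \tilde X_i$ so that $[\tilde X_i] = [X_i] + [B_i]$ for $i=0,1$. Using additivity of the relative motivic nearby cycle along the open-closed decomposition $\tilde X = X \sqcup B$---obtained by evaluating both sides on a common $T$-equivariant log resolution of $\tilde f$ adapted to $B \cup \tilde f^{-1}(0)$---the identity above reduces the claim $[\phi_f] = [X_1] - [X_0]$ to the boundary identity $[\phi_{\tilde f|_B}] = [B_1] - [B_0]$, interpreted equivariantly after pushforward to the point.

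The main obstacle is precisely this boundary identity, since $B$ itself need not be smooth and the induced $\C^*$-action on $B$ need not be circle-compact, so the previous proposition does not apply to it directly. The way around this is to choose the compactification more carefully, built from the $T$-equivariant trivialization $X\setminus X_0 \cong X_1\times\C^*$ established before Proposition~\ref{prop: if f is proper and C* equivariant then vanishing cycle is X1-X0} together with a projective $T$-equivariant completion $\bar X_1$ of $X_1$: then $B$ is a $T$-invariant divisor fitting into the trivial family $\bar X_1\times\C\to\C$, and the same Bittner pushforward manipulation used in the proof of the proper case, now applied to $\tilde f|_B$ after equivariant resolution and relative to this trivial target, yields the required boundary identity. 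Combining all the identities then produces $[\phi_f] = [X_1] - [X_0]$ as required.
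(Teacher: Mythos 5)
Your proposal takes a genuinely different route from the paper: you try to reduce to Proposition~\ref{prop: if f is proper and C* equivariant then vanishing cycle is X1-X0} by constructing a $T$-equivariant proper model and controlling the boundary contribution, whereas the paper abandons the proper-case strategy entirely and instead computes $[\psi_f]$ directly from the Denef--Loeser embedded-resolution formula~\eqref{eqn: Denef-Loeser defn of motive of nearby cycles}, using the circle-compact action to produce a \BiBi stratification of each stratum $E_A$ and showing the alternating sum~\eqref{eqn: thm reformulation 2} telescopes to zero. The paper explicitly remarks that it does not have a conceptual (reduction-style) proof of this proposition.

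There is a genuine gap at the boundary step. The quantity you call ``$[\phi_{\tilde f|_B}]$'' is not defined: $B$ need not be smooth, and more fundamentally the motivic nearby cycle is not an invariant that can be formed by restricting $\tilde f$ to a closed subvariety --- it is computed relative to the ambient smooth variety, so $[\psi_{\tilde f}]_{\tilde X_0}$ restricted to $B_0$ has no a~priori relation to any intrinsic nearby cycle of $B$. What your reduction actually requires (after restricting $[\psi_{\tilde f}]_{\tilde X_0}$ to the open stratum $X_0$, which is legitimate, and using $[\psi_{\tilde f}]=[\tilde X_1]$) is the identity $\Psi_B=[B_1]$, where $\Psi_B$ denotes the pushforward of $[\psi_{\tilde f}]_{\tilde X_0}|_{B_0}$; but neither the ``additivity'' you invoke nor the Bittner manipulation from the proper case establishes this. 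The proper-case argument compares two birational models of the total space and only recovers the \emph{absolute} class $[\psi_{\tilde f}]$; it does not give a stratified refinement along $\tilde X_0$ that one could restrict to $B_0$.

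The specific construction you propose to rescue this also does not work as stated. Gluing $X$ and $\bar X_1\times\C$ along $X_1\times\C^*$ produces a non-separated space: for a punctured sequence tending into $X_0$, the limit in $X$ and the limit in $\bar X_1\times\C$ are distinct, because the two compactifications of $X_1\times\C^*$ disagree over $0\in\C$. Repairing this by taking a graph closure and resolving (exactly as in the proper-case proof) destroys the product description of $B$ over the central fiber, which was the ingredient your boundary identity was supposed to rest on. Finally, and most tellingly, circle-compactness does not visibly enter your argument, yet the proposition is false without it: for example $f(x)=x$ on $X=\C^*$ with the scaling $\C^*$-action gives $[\phi_f]=0$ but $[X_1]-[X_0]=1$. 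Any correct proof must use the existence of $\lambda\to 0$ limits inside $X$ in an essential way, which is precisely what the paper's \BiBi argument does.
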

We do not have a conceptual proof of this Proposition as we did for
Proposition~\ref{prop: if f is proper and C* equivariant then
vanishing cycle is X1-X0}. Instead, in Appendix~\ref{appendix: nearby
fiber proof}, we prove this directly using Denef and Loeser's motivic
integration formula for $[\phi _{f}]$. The key point is that the
circle compact $\C ^{*} $-action gives rise to a \BiBi stratification
of $X$. The conditions that $X$ is quasi-projective and the $T$-action
is linear can probably weakened; they are added for convenience in the
proof and because they should hold in most cases of interest. The
condition that the fixed point set of the $\C ^{*}$-action is compact
can be dropped; we only use the existence of $\lambda \to 0 $ limits.

We will also use the following enhancement of the proposition.
\begin{proposition}\label{prop: relative and effective version of vanishing cycle = X1 - X0 proposition}
Let $f:X\to \C $ be a $T$-equivariant regular function satisfying the
assumptions of Proposition~\ref{prop: if f is C* equiv and circle
compact, then vanishing cycle is X1-X0}. Let $Z=\{df=0 \}$ be the
degeneracy locus of $f$ and let $Z_{\aff }\subset X_{\aff }$ be the
affinizations of $Z$ and $X$. Suppose that $X_{0}=f^{-1} (0)$ is
reduced. Then $[\phi_f]_{Z_{\aff }}$, the motivic
vanishing cycle of~$f$ relative to $Z_{\aff }$, lies in the subring
$\M_{Z_{\aff }}\subset\M_{Z_{\aff }}^\muhat$.
\end{proposition}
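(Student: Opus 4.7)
The plan is to refine the proof of Proposition~\ref{prop: if f is C* equiv and circle compact, then vanishing cycle is X1-X0} (carried out in Appendix~\ref{appendix: nearby fiber proof} via Denef--Loeser's motivic integration formula) to the relative setting over $Z_{\aff}$, and to show that the $\hat\mu$-action on the relative class trivializes after pushforward through the affinization map $\pi: Z \to Z_{\aff}$.

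First, I would choose a $T$-equivariant embedded resolution $\rho: Y \to X$ of $X_{0}$, with simple normal crossings total transform $\rho^{-1}(X_{0}) = \sum_{i} N_{i} E_{i}$ whose irreducible components $E_{i}$ are $T$-invariant; equivariant Hironaka applies since $T$ is connected. Denef--Loeser's formula, in the relative form recalled in the appendix, gives
\[
[\psi_{f}]_{X_{0}} \;=\; \sum_{\emptyset \neq I} (1-\LL)^{|I|-1}\,[\widetilde{E}_{I}^{\circ} \to X_{0}]_{X_{0}}^{\hat\mu} \;\in\; \M_{X_{0}}^{\hat\mu},
\]
where $E_{I}^{\circ} = \bigcap_{i \in I} E_{i} \setminus \bigcup_{j \notin I} E_{j}$ and $\widetilde{E}_{I}^{\circ} \to E_{I}^{\circ}$ is a cyclic $\mu_{m_{I}}$-cover, with $m_{I} = \gcd\{N_{i} : i \in I\}$, built by extraction of an $m_{I}$-th root of a local equation of $f \circ \rho$. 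The $\hat\mu$-action on each term factors through the deck action of $\mu_{m_{I}}$ on $\widetilde{E}_{I}^{\circ}$.

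The crux is the observation that each cover $\widetilde{E}_{I}^{\circ} \to E_{I}^{\circ}$ is canonically $T$-equivariant. Indeed, the $T$-action lifts to $Y$, and since $f \circ \rho$ transforms by the primitive character $\chi$, an $m_{I}$-th root of the local equation transforms by a fixed $m_{I}$-th root $\chi^{1/m_{I}}$; this identifies $\widetilde{E}_{I}^{\circ}$ as the fiber product $E_{I}^{\circ} \times_{T} \widetilde{T}$, where $\widetilde{T} \to T$ is the cyclic $m_{I}$-cover defined by $\chi^{1/m_{I}}$. In particular, the deck group $\mu_{m_{I}}$ is realized as the subgroup $\mu_{m_{I}} \subset \widetilde{T}$. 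The hypothesis that $X_{0}$ is reduced forces $N_{i} = 1$ on every strict transform, so all nontrivial covers $\widetilde{E}_{I}^{\circ}$ occur over strata of the exceptional locus, which maps into $\Sing(X_{0}) = Z$. Restricting the Denef--Loeser sum to $Z$ and pushing forward along $\pi$, the circle compactness hypothesis together with affineness of $Z_{\aff}$ forces the $\chi$-isotypic direction of $\mathcal{O}_{Z}$ to die in $\mathcal{O}_{Z_{\aff}}$; consequently, the $T$-orbit that carries the $\mu_{m_{I}}$-deck action collapses in the pushforward, and each $\pi_{!}[\widetilde{E}_{I}^{\circ}]$ becomes $m_{I}$ copies of $\pi_{!}[E_{I}^{\circ}]$ with trivial $\hat\mu$-action. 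Subtracting $[X_{0}]_{Z_{\aff}}$, which tautologically lies in $\M_{Z_{\aff}}$, yields $[\phi_{f}]_{Z_{\aff}} \in \M_{Z_{\aff}}$ as claimed.

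The main obstacle is the collapse-after-pushforward step. The cleanest argument passes stratum-by-stratum through the Bia{\l}ynicki-Birula decomposition of $Y$ associated to a lift of the circle-compact $\C^{*}$-subaction (as in the appendix proof of Proposition~\ref{prop: if f is C* equiv and circle compact, then vanishing cycle is X1-X0}). This stratifies $Y$ into affine cells fibered over fixed-point components; on each cell the cover $\widetilde{E}_{I}^{\circ}$ admits an explicit $T$-equivariant trivialization as a product with $\mu_{m_{I}} \subset \widetilde{T}$, making the trivialization of the $\hat\mu$-action after $\pi_{!}$ direct to verify. The reducedness of $X_{0}$ is needed precisely here, as it rules out nontrivial cyclic covers appearing over the smooth part of $X_{0}$ (which need not map into $Z_{\aff}$ in an orbit-collapsing manner).
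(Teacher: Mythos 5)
Your high-level strategy matches the paper's: take a $T$-equivariant embedded resolution, observe that each \'etale cover $\widetilde E_I^o\to E_I^o$ is $T$-equivariant with the deck group $\mu_{m_I}$ realized as a subgroup of a connected torus $\widetilde T$ covering $T$, use reducedness of $X_0$ to force $N_i=1$ on every strict-transform component so that all nontrivial covers live over the exceptional locus (which lands in $\Sing(X_0)\subset Z$), and then invoke a Bia{\l}ynicki-Birula decomposition. But the mechanism you propose for killing the $\hat\mu$-action is wrong. You claim the cover ``collapses'' so that $\pi_![\widetilde E_I^o]$ becomes ``$m_I$ copies of $\pi_![E_I^o]$'' and that $\widetilde E_I^o$ ``admits a $T$-equivariant trivialization as a product with $\mu_{m_I}$.'' The cover $\widetilde E_I^o\to E_I^o$ is connected in general, not a trivial $m_I$-sheeted cover, and even for a free $\mu_m$-action there is no relation $[W]=m[W/\mu]$ in the Grothendieck ring (consider $\C^*\xrightarrow{z\mapsto z^m}\C^*$). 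What one actually needs, and what Lemma~\ref{lem: no monodromy} in Appendix~B establishes, is the equality of a single class $[\widetilde E_I^o,\mu_{m_I}]=[E_I^o]$ in $\M^{\hat\mu}$. The proof of that (Lemma~\ref{lemma cyclic subgroup}) is not a ``collapse to a disjoint union'': it embeds $\mu_{m_I}$ into a $\C^*\subset\widetilde T$, BB-stratifies $\widetilde E_I^o$ into pieces of the form $(V\setminus\{0\})\times F$ with $V$ a $\mu$-representation, and then uses the defining relation \eqref{eqn: vector bundle with G action is trivial in equivariant K-group} of $\M^{\hat\mu}$---that $G$-equivariant vector bundles have the same class as trivial ones---to conclude $[V\setminus\{0\},\mu]=[V\setminus\{0\}]$, finishing with \cite[Lemma~5.1]{Looijenga-motivic}. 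You never invoke this relation, and without it the trivialization has no foothold.

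Separately, your justification for why the argument works \emph{relative} to $Z_{\aff}$ is not valid. The sentence ``the circle compactness hypothesis together with affineness of $Z_{\aff}$ forces the $\chi$-isotypic direction of $\mathcal O_Z$ to die in $\mathcal O_{Z_{\aff}}$'' is not a meaningful statement: by definition of affinization, $\Gamma(Z,\mathcal O_Z)=\Gamma(Z_{\aff},\mathcal O_{Z_{\aff}})$, so no sections ``die.'' The paper instead makes the concrete geometric observation that every component $E_i$ that is not a strict transform of $X_0$ (i.e., every component over which a nontrivial cover can occur) has its composite $E_i\to Y\to X\to X_{\aff}$ contract to a point, so that for those $E_I$ the relative class over $X_{\aff}$ (hence over $Z_{\aff}$) is supported over a single point and the absolute statement of Lemma~\ref{lemma cyclic subgroup} suffices. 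Your writeup omits this reduction and substitutes a gesture at an isotypic decomposition that does not carry the argument.
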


This result will also be proved in Appendix~\ref{appendix: nearby
fiber proof}.

\subsection{The virtual motive of a degeneracy locus}

\begin{definition}\label{defn: virtual motive}
Let $f\colon X\to\C$ be a regular function on a smooth variety~$X$,
and let
\[Z=\{df=0\}\subset X\]
be its degeneracy locus. We define the \emph{relative virtual
motive} of~$Z$ to be
\[ 
[Z]_\relvir = -\LL ^{-\frac{\dim X}{2}}[\phi_f]_Z\in\M^\muhat_Z,
\]
and the \emph{absolute virtual motive} of $Z$ to be 
\[ [Z]_\vir = -\LL ^{-\frac{\dim X}{2}}[\phi_f]\in\M^\muhat_\C,
\]
the pushforward of the relative virtual motive $[Z]_\relvir$ 
to the absolute motivic ring~$\M^\muhat_\C$.
\end{definition}

\begin{remark}\label{rem: the case when Z= (df=0) is smooth}
As a degenerate but important example, consider $f=0$. Then we have
$X_0=X$ and $[\psi_f]_{X_0}=0$, so the virtual motives of a smooth
variety $X$ with $f=0$ are given by
\begin{equation}\label{eqn: relvir for smooth X}
[X]_\relvir = \LL ^{-\frac{\dim X}{2}}[X]_X\in \M _{X}\subset \M
^{\muhat }_{X}
\end{equation}
and
\begin{equation}\label{eqn: vir motive for smooth X}
[X]_\vir = \LL ^{-\frac{\dim X}{2}}[X] \in \M _{\C }\subset \M
^{\muhat }_{\C }.
\end{equation}
If one imposes~\eqref{eqn: sqrt (L) = 1-[2pt,mu2])} as a
relation in $\M ^{\muhat }_{\C }$, then it is not hard to show that
whenever $Z$ is smooth, $[Z]_{\relvir }$ agrees with
equation~\eqref{eqn: relvir for smooth X} at each point, that
is for each $P\in Z$, $[Z]_{\relvir }|_{P} = \LL ^{-\frac{\dim
Z}{2}}$.
\end{remark}

\begin{proposition} 
$\,$
\begin{enumerate}
\item At a point $P\in Z$, the fiberwise Euler characteristic of the
relative virtual motive $[Z]_\relvir\in\M^\muhat_Z$, evaluated
at the specialization $\LL ^\half=-1$, is equal to the value at
$P\in Z$ of the constructible function
$\nu_Z$ of~\cite{Behrend-micro}.
\item The Euler characteristic of the absolute virtual motive 
$[Z]_\vir\in\M_\C^\muhat$ is the virtual Euler characteristic 
$\chi_\vir(Z)\in\Z$ of~\cite{Behrend-micro}:
\[
\chi ([Z]_{\vir }) = \chi _{\vir } (X) = \sum _{k\in \Z } k\,\chi \left(\nu ^{-1}_{Z} (k)
\right).
\]
\end{enumerate}
\label{prop_spec_to_kai}
\end{proposition}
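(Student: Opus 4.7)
The plan is to deduce both statements from a single pointwise computation, using Behrend's Milnor fiber formula from \cite{Behrend-micro}. Recall that when $Z=\{df=0\}$ is the critical locus of a regular function $f$ on a smooth variety $X$ of dimension $d$, Behrend's constructible function satisfies
\[
\nu_Z(P) = (-1)^{d}\bigl(1 - \chi(F_{f,P})\bigr)
\]
for all $P\in Z$, where $F_{f,P}$ denotes the Milnor fiber of $f$ at $P$. This will be the main input; everything else is bookkeeping.

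For part (1), I would first note that at a point $P\in X_0$ the restriction $[\psi_f]_{X_0}|_P \in \M^\muhat_\point$ is the motivic Milnor fiber of $f$ at $P$, and its image under $\chi$ is the topological Euler characteristic $\chi(F_{f,P})$. This is a standard consequence of the Denef--Loeser formula in terms of an embedded resolution (recalled explicitly in Appendix~\ref{appendix: nearby fiber proof}), together with the fact that the Euler characteristic specialization forgets the $\muhat$-action. Since $[\phi_f]_{X_0} = [\psi_f]_{X_0} - [X_0]_{X_0}$ with trivial $\muhat$-action on $X_0$, restricting to $P$ gives
\[
\chi([\phi_f]|_P) = \chi(F_{f,P}) - 1.
\]
Under $\LL^\half=-1$ we have $\LL^{-d/2}\mapsto (-1)^d$, so the prefactor $-\LL^{-d/2}$ specializes to $-(-1)^{d}$, and hence
\[
\chi([Z]_\relvir|_P) = -(-1)^{d}\bigl(\chi(F_{f,P})-1\bigr) = (-1)^{d}\bigl(1-\chi(F_{f,P})\bigr) = \nu_Z(P),
\]
which is exactly part (1).

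For part (2), I would push forward to a point. By construction $[Z]_\vir = \pi_![Z]_\relvir$ along $\pi\colon Z\to\point$, and the Euler characteristic specialization commutes with proper pushforward. For any relative motive $[M]_Z\in\M^\muhat_Z$, this compatibility amounts to integration of the fiberwise Euler characteristic against Euler characteristic measure:
\[
\chi(\pi_![M]_Z) \;=\; \sum_{k\in\Z} k\cdot \chi\bigl(\{P\in Z\,:\,\chi([M]_Z|_P)=k\}\bigr).
\]
Applying this to $[Z]_\relvir$ and using part (1) to identify the fiberwise Euler characteristic with $\nu_Z$, we obtain
\[
\chi([Z]_\vir) \;=\; \sum_{k\in\Z} k\cdot \chi\bigl(\nu_Z^{-1}(k)\bigr) \;=\; \chi_\vir(Z),
\]
matching the definition given in the introduction.

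The main obstacle is simply citing and applying the correct formulation of the Milnor fiber formula for $\nu_Z$ from \cite{Behrend-micro}. Once this is in hand, the remainder consists of tracking the sign of the prefactor $-\LL^{-d/2}$ under $\LL^\half=-1$, noting that $[X_0]_{X_0}$ contributes trivially under $\muhat$, and invoking the standard compatibility of the motivic Euler characteristic with proper pushforward.
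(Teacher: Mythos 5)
Your proof is correct and follows essentially the same route as the paper's: both invoke Behrend's Milnor-fiber formula $\nu_Z(P)=(-1)^{\dim X}(1-\chi(F_P))$, identify $\chi([\phi_f]|_P)=\chi(F_P)-1$ via Denef--Loeser, track the sign of $-\LL^{-\dim X/2}$ under $\LL^{1/2}=-1$, and obtain (2) from (1) by pushforward. The paper simply states ``(2) clearly follows from (1),'' whereas you spell out the compatibility of $\chi$ with $\pi_!$ as integration against Euler characteristic measure, but this is the same argument.
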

\proof By~\cite[Eq~(4)]{Behrend-micro}, for $Z=\{df=0\}\subset X$, the
value of the function~$\nu_Z$ at the point~$P$ is
\[ \nu_Z(P)=(-1)^{\dim X}(1-\chi(F_P)),
\]
where $F_P$ is the Milnor fiber of~$f$ at~$P$. On the other hand, the
pointwise Euler characteristic of $[\phi_f]$ at $P$ is the Euler
characteristic of the reduced cohomology of the Milnor fiber $F_P$
\cite[Thm.~3.5.5]{Denef-Loeser-geometry}, equal to
$\chi(F_P)-1$. The factor $-\LL ^{\dim X/2}$ at $\LL ^\half=-1$
contributes the factor $-(-1)^{\dim X}$. This proves~(1).  (2) clearly
follows from~(1).  \endproof

\begin{remark}
If $Z=\{df=0 \}$ is a moduli space of sheaves on a \CY
threefold, then the associated \DT invariant is given by $\chi _{\vir
} (Z)$. So by the above proposition, $[Z]_{\vir }$ is a motivic
refinement of the \DT invariant and hence can be regarded as a
\emph{motivic \DT invariant}. The function $f$ in this context is
called a \emph{global Chern-Simons functional} or a
\emph{super-potential.}
\end{remark}

\begin{remark} Unlike the ordinary motivic class of $Z$, the virtual
motive is sensitive to both the singularities and the scheme structure
of~$Z$ since in particular, the constructible function $\nu _{Z}$
is. However, unlike the function $\nu _{Z}$, we expect the virtual
motive of $Z$ to depend on its presentation as a degeneracy locus
$Z=\{df=0 \}$ and not just its scheme structure. We will not include
the pair $(X,f)$ in the notation but it will be assumed that whenever
we write $[Z]_\vir$, it is to be understood with a particular choice
of~$(X,f)$. When $Z$ is smooth, a canonical choice is provided by
$(X,f)= (Z,0)$.
\end{remark}

\begin{remark} Let us comment on our use of the term {\em virtual},
which has acquired two different meanings in closely related
subjects. On the one hand, there are what are sometimes called {\em
virtual invariants}, invariants of spaces which are additive under the
scissor relation; these invariants are generally called {\em motivic}
in this paper. Examples include the {\em virtual Hodge polynomial} and
the {\em virtual Poincar\'e polynomial}, for which we use the terms
E-polynomial and weight polynomial. On the other hand, there is the
philosophy of {\em virtual smoothness} and the technology of {\em
virtual cycles} for spaces such as Hilbert schemes of a threefold,
which have excess dimension compared to what one would expect from
deformation-obstruction theory. We use the term {\em virtual}
exclusively in this second sense in this paper.
\end{remark}

\section{The Hilbert scheme of points on $\C^3$}

\subsection{Generalities on Hilbert schemes of points}

For a smooth and quasi-projective variety $X$ of dimension~$d$, let 
\[\Sym^n(X)=X^n/S_n\] denote
the $n$-th symmetric product of $X$. For a partition $\alpha$ of $n$, 
let \[\Sym^n_\alpha(X)\subset \Sym^n(X)\]
denote the locally closed subset of $\Sym^n(X)$ 
of $n$-tuples of points with multiplicities 
given by~$\alpha$. This gives a stratification
\[ \Sym^n(X) = \coprod_{\alpha\vdash n} \Sym_\alpha^n(X).
\] 
Assume that the number of parts in~$\alpha$ is~$n(\alpha)$ and that $\alpha$ 
contains~$\alpha_i$ parts of length~$i$. Let 
$G_\alpha =\prod_i S_{\alpha_i}$ be the corresponding 
product of symmetric groups. Then 
there exists an open set $T_\alpha\subset X^{n(\alpha)}$ such that
\[ \Sym_\alpha^n(X) = T_\alpha/G_\alpha
\]
is a free quotient. 

There is a similar story for the Hilbert scheme. The Hilbert scheme 
$\Hilb^n(X)$ is stratified 
\[ \Hilb^n(X) = \coprod_{\alpha\vdash n} \Hilb_\alpha^n(X)
\] 
into locally closed strata $\Hilb_\alpha^n(X)$, the preimages of 
$\Sym_\alpha^n(X)$ under the Hilbert--Chow morphism. On the deepest stratum 
with only one part, 
\[\Hilb^n_{(n)}(X)\to \Sym^n_{(n)}(X) \cong X\]
is a known to be a Zariski locally trivial fibration with 
fiber $\Hilb^n(\C^d)_0$, the punctual Hilbert scheme of affine $d$-space 
at the origin; see e.g.~\cite[Corollary 4.9]{Behrend-Fantechi08}. For affine 
space, we have a product
\[\Hilb^n_{(n)}(\C^d)\cong \C^d\times\Hilb^n(\C^d)_0.\]
For an arbitrary partition $\alpha$, by e.g.~\cite[Lemma 4.10]{Behrend-Fantechi08},
\[\Hilb^n_\alpha(X) = V_\alpha/G_\alpha
\]
is a free quotient, with
\[
V_\alpha = \prod_i \left(\Hilb_{(i)}^{i}(X)\right)^{\alpha_i}\setminus\tilde\Delta,
\]
where $\tilde\Delta$ denotes the locus of clusters with intersecting support. The product of
Hilbert--Chow morphisms gives a map
\[ V_\alpha\to \prod_iX^{\alpha_i}\setminus \Delta,\]
where as before, $\Delta$ is the big diagonal in a product of copies of~$X$. 
This map is a Zariski locally trivial fibration with fiber 
$\prod_i(\Hilb^i(\C^d)_0)^{\alpha_i}$.

\subsection{The Hilbert scheme of $\C^3$ as critical locus} \label{subsec: the hilb scheme as a crit locus}

Let $T$ be the three-dimensional space of linear functions on $\C^3$,
so that \[\C^3=\Spec\Sym^\bullet T.\] Fix an isomorphism $\vol: \wedge^3
T\cong \C$; this corresponds to choosing a holomorphic volume form
(\CY form) on $\C^3$. We start by recalling the
de\-scrip\-tion of the Hilbert scheme as a degeneracy locus from
\cite[Proof of Thm.~1.3.1]{Szendroi-GT}.

\begin{proposition} The pair $(T,\vol)$ defines an embedding of the Hilbert 
scheme $\Hilb^n(\C^3)$ into a smooth quasi-projective variety $M_n$, which in 
turn is equipped with a regular function $f_n\colon M_n\to\C$, such that
\begin{equation} \Hilb^n(\C^3) = \{ df_n=0\}\subset M_n\label{eq_emb}\end{equation}
is the scheme-theoretic degeneracy locus of the function $f_n$ on $M_n$.
\label{prop:sup}
\end{proposition}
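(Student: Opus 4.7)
The plan is to realize $M_n$ as the moduli space of framed cyclic modules over the free algebra $\C\langle x_1,x_2,x_3\rangle$ of rank $n$, equipped with the cubic Chern--Simons super-potential determined by the volume form. The critical points will then automatically correspond to cyclic modules whose three endomorphisms pairwise commute, which is the standard description of $\Hilb^n(\C^3)$.

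Fix $V=\C^n$. Consider the affine space
$$\tilde M_n = \bigl(T^\vee\otimes\End(V)\bigr)\oplus V \;\cong\; \End(V)^{\oplus 3}\oplus V,$$
parametrising triples of endomorphisms $(A_1,A_2,A_3)$ together with a vector $v\in V$. Let $U_n\subset \tilde M_n$ be the open subvariety of \emph{cyclic} tuples, those for which $v$ generates $V$ under the action of the free algebra $\C\langle A_1,A_2,A_3\rangle$. The group $\GL(V)$ acts by simultaneous conjugation on the $A_i$ and linearly on $v$, and this action is free on $U_n$: any stabiliser element commutes with each $A_i$ and fixes $v$, hence acts trivially on the subspace generated by $v$ under the $A_i$, which equals $V$. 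Standard quotient theory then yields a smooth quasi-projective variety $M_n = U_n/\GL(V)$ of dimension $2n^2+n$.

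The volume form $\vol\colon \wedge^3 T\xrightarrow{\sim}\C$ defines a cubic function on $T^\vee\otimes\End(V)$, which in chosen coordinates reads
$$\tilde f_n(A_1,A_2,A_3,v) = \Tr\bigl(A_1[A_2,A_3]\bigr).$$
It is $\GL(V)$-invariant by cyclicity of the trace and independent of $v$, so it descends to a regular function $f_n\colon M_n\to\C$. A short calculation using trace identities gives
$$d\tilde f_n = \Tr\bigl([A_2,A_3]\,dA_1 + [A_3,A_1]\,dA_2 + [A_1,A_2]\,dA_3\bigr),$$
so the critical ideal is generated by the matrix entries of the three commutators $[A_i,A_j]$. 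Hence $\{d\tilde f_n=0\}\cap U_n$ is, scheme-theoretically, the locus of cyclic triples of pairwise commuting endomorphisms together with a cyclic vector, and its $\GL(V)$-quotient is $\{df_n=0\}\subset M_n$.

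The main point requiring genuine work is the scheme-theoretic identification of this critical locus with $\Hilb^n(\C^3)$. Set-theoretically it is immediate: a cyclic tuple $(A_1,A_2,A_3,v)$ with pairwise commuting $A_i$ is exactly the data of a cyclic quotient $\C[x_1,x_2,x_3]\twoheadrightarrow V$, and conversely. The subtlety is that one must show the ideal generated by the commutator entries corepresents the Hilbert functor, not merely that its closed points match. One establishes this by constructing the universal cyclic quotient on $\{df_n=0\}$ and invoking the universal property of the Hilbert scheme; this is the content of \cite[Proof of Thm.~1.3.1]{Szendroi-GT} and is the principal obstacle in the proof.
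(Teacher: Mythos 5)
Your proposal follows the paper's proof essentially step for step: same ambient affine space of triples with a framing vector, same open cyclic locus $U_n$, same super-potential $\Tr A_1[A_2,A_3]$ via $\vol$, same differential computation giving the commutator equations, and the same final appeal to an external source (you cite Szendr\H oi, the paper cites Nakajima, both making the same point) for the scheme-theoretic identification of the critical locus with the moduli functor of $n$ points.

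The one place where your argument is thinner than the paper's is the construction of the quotient $M_n = U_n/\GL(V)$. Freeness of the $\GL(V)$-action on $U_n$ does not by itself hand you a smooth quasi-projective quotient ``by standard quotient theory''; a free action of a reductive group on a quasi-affine variety can have a badly behaved (e.g.\ non-separated) orbit space. The paper closes this by working in the GIT framework explicitly: it linearizes the $\GL(V_n)$-action on $\Hom(T\otimes V_n,V_n)\times V_n$ by the character $\chi(g)=\det(g)$ and invokes \cite[Lemma 1.2.1]{Szendroi-GT} to identify $U_n$ as precisely the $\chi$-stable locus, so that $M_n = U_n/\GL(V_n)$ is a smooth quasi-projective GIT quotient. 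You should make this GIT step explicit; the freeness observation you give (a stabilizer fixes $v$ and commutes with the $A_i$, hence fixes the span generated by $v$, which is all of $V$) is correct and is indeed the content that makes stability equal cyclicity, but it needs to be packaged as a stability statement rather than left as ``standard quotient theory.''
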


\begin{proof} A point $[Z]\in \Hilb^n(\C^3)$ corresponds to an
embedded $0$-dimen\-sio\-nal subscheme $Z\hookrightarrow\C^3$ of length
$n$, in other words to a quotient $\O_{\C^3}\to \O_Z$ with $H^0(\O_Z)$
of dimension $n$.  Fixing an $n$-dimensional complex vector
space~$V_n$, the data defining a cluster consists of a linear map
$T\otimes V_n\to V_n$, subject to the condition that the induced
action of the tensor algebra of $T$ factors through an action of the
symmetric algebra $\Sym^\bullet T$, and a vector $1\in V_n$ which
generates $V_n$ under the action.

Let
\[ U_n\subset \Hom(T\otimes V_n, V_n) \times V_n\]
denote the space of maps with cyclic vector, the open subset where the
linear span of all vectors obtained by repeated applications of the
endomorphisms to the chosen vector $v\in V_n$ is the whole $V_n$. 

Let $\chi :\GL (V_{n})\to \C ^{*}$ be the character given by $\chi
(g)=\det (g)$.  As proved in~\cite[Lemma 1.2.1]{Szendroi-GT}, the open
subset~$U_n$ is precisely the subset of stable points for the action
of $\GL (V_{n})$ linearized by $\chi$. In particular, the action of
$\GL(V_n)$ on $U_n$~is free, and the quotient
\[ M_n = \Hom(T\otimes V_n, V_n) \times V_n/\!/_\chi \GL(V_n)= U_n/\GL(V_n)\]
is a smooth quasi-projective GIT quotient. 

Finally consider the map 
\[ \phi \mapsto  \Tr\left(\wedge^3\phi\right),\]
where $\wedge^3\phi:\bigwedge^3 T\times V_n\to V_n$ and we use the
isomorphism $\vol$ before taking the trace on~$V_n$.  It is clear that
this map descends to a regular map $f_n\colon M_n\to\C$.  The
equations $\{df_n=0\}$ are just the equations which say that the
action factors through the symmetric algebra; this is easy to see from
the explicit description of Remark~\ref{rem_expl} below. Finally, as
proved by \cite{Nakajima-Hilbert} (in dimension 2, but the proof
generalizes), the scheme cut out by these equations is precisely the
moduli scheme representing the functor of~$n$ points on~$\C^3$. Thus,
as a scheme,
\[ \Hilb^n(\C^3)=\{df_n=0\} \subset M_n.
\]
\end{proof}

\begin{remark} \rm Fixing a basis of~$V_n$, the commutative algebra
$\C[x,y,z]$ acts on $V_n$ by a triple of matrices $A,B,C$. The variety
$M_n$ is the space of triples with generating vector, where the
matrices do not necessarily commute, modulo the action of $GL
(V_{n})$. The map $f_n$ on triples of matrices is given by
\[ (A,B,C)\mapsto {\rm Tr} [A,B]C.\]
Written explicitly in terms of the matrix entries, 
\[  {\rm Tr} [A,B]C = \sum_{i,k} \sum_j \left(A_{ij} B_{jk}- B_{ij} A_{jk} \right) C_{ki},\]
and so
\[ \partial_{C_{ki}} {\rm Tr} [A,B]C = \sum_j \left(A_{ij} B_{jk}-B_{ij} A_{jk}\right) = 0
\]
for all $i,k$ indeed means that $A$ and $B$ commute. 
\label{rem_expl}\end{remark}

\begin{remark}
This description of $\Hilb ^{n} (\C ^{3})$ can also be written in the
language of quivers. Consider the quiver consisting of two nodes, with a
single arrow from the first node to the second (corresponding to $v$) and
three additional loops on the second node (corresponding to $A$, $B$, and
$C$), with relations coming from the super-potential $W=A[B,C]$. 
Then the space $M_{n}$ can be identified with stable representations 
of this bound quiver, with dimension vector $(1,n)$, and specific 
choice of stability parameter which matches the GIT stability condition
described above. 
\end{remark}

\subsection{Some properties of the family}

\begin{lemma} There exists a linearized $T=(\C^*)^{3}$-action on $M_n$
such that $f_n\colon M_n \to \C$ is equivariant with respect to the
primitive character $\chi :T\to \C ^{*}$ given by $\chi
(t_{1},t_{2},t_{3})=t_{1}t_{2}t_{3}$. Moreover, the action of the
diagonal 1-parameter subgroup is circle compact.
\end{lemma}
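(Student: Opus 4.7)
The plan is to define the $T=(\C^*)^3$-action on the prequotient $\Hom(T\otimes V_n, V_n)\times V_n$ by letting $T$ act on the three-dimensional space of linear forms via its three standard characters and trivially on $V_n$. In the coordinates $(A,B,C,v)$ this reads
\[ (t_1,t_2,t_3)\cdot(A,B,C,v) = (t_1 A,\, t_2 B,\, t_3 C,\, v). \]
This commutes with the $\GL(V_n)$-action, preserves the cyclicity condition cutting out $U_n$ (rescaling the matrices leaves the span reachable from $v$ unchanged), and fixes the linearizing character $\det$ of $\GL(V_n)$; it therefore descends to a linearized action on $M_n$. Equivariance $f_n(t_1 A, t_2 B, t_3 C) = t_1 t_2 t_3\, f_n(A,B,C)$ follows immediately from $f_n = \Tr([A,B]C)$. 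The character $\chi(t_1,t_2,t_3)=t_1 t_2 t_3$ is primitive since $(1,1,1)\in\Z^3$ is not divisible.

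The diagonal subgroup $\C^*\subset T$ acts on $M_n$ by $\lambda\cdot[A,B,C,v] = [\lambda A,\lambda B,\lambda C, v]$. For existence of $\lim_{\lambda\to 0}$, build the cyclic filtration
\[ F_{-1}=0,\quad F_0=\C v,\quad F_k = F_{k-1}+A F_{k-1}+B F_{k-1}+C F_{k-1} \text{ for } k\geq 1, \]
which stabilizes at $V_n$ after finitely many steps. Choose a splitting $V_n = \bigoplus_{k\geq 0} V^{(k)}$ with $F_k = \bigoplus_{j\leq k} V^{(j)}$, and let $\rho\colon\C^*\to\GL(V_n)$ act by weight $k$ on $V^{(k)}$. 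Since each of $A,B,C$ takes $F_k$ into $F_{k+1}$, its decomposition into $\rho$-weight pieces contains only weights $\leq 1$. Conjugation by $\rho(\lambda)^{-1}$ scales the weight-$j$ piece by $\lambda^{-j}$, so the representative $\rho(\lambda)^{-1}\cdot(\lambda A,\lambda B,\lambda C,v)$ of the class $\lambda\cdot[A,B,C,v]$ has only components of nonnegative $\rho$-weight. Sending $\lambda\to 0$ kills everything except the top weight-$1$ pieces, producing an explicit limiting representative $(A^{(1)},B^{(1)},C^{(1)},v)\in U_n$; cyclicity of this associated-graded triple is inherited from the filtration by induction.

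For compactness of the $\C^*$-fixed locus, any fixed class carries a $\Z$-grading $V_n = \bigoplus V^{(k)}$ with $v\in V^{(0)}$ (after an overall shift) in which $A,B,C$ are homogeneous of degree $+1$. Stratifying by the dimension sequence $(d_0=1,d_1,\ldots)$ with $\sum d_k=n$ gives only finitely many strata; each stratum is the GIT quotient of the cyclic locus in $\prod_k \Hom(\C^{d_k},\C^{d_{k+1}})^3$ by $\prod_k \GL(d_k)$ with the $\det$-linearization, and each is projective by the standard Quot-scheme presentation of moduli of cyclic graded quotients of a free module over the tensor algebra on three generators. The fixed locus is therefore a finite disjoint union of projective varieties, hence compact. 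The most technical point in a full writeup is this last projectivity statement; the ungraded analog is the classical noncommutative Hilbert scheme of $\C\langle x,y,z\rangle$, which is projective by the same Quot-scheme argument, and the graded refinement proceeds in parallel.
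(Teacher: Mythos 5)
Your $T$-action $(t_1A,t_2B,t_3C,v)$ differs from the paper's $(t_1A,t_2B,t_3C,t_1t_2t_3v)$ only by a central rescaling of $v$, which is absorbed by the free $\GL(V_n)$-action, so both induce the same action on $M_n$; that part and the equivariance of $f_n$ are fine and essentially the paper's argument.

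For circle compactness you take a genuinely different, more explicit route. The paper instead uses the proper GIT morphism $\pi_n\colon M_n\to M_n^0$ to the affine quotient: the diagonal $\C^*$ contracts all of $M_n^0$ to the cone point, so properness of $\pi_n$ gives both existence of $\lambda\to 0$ limits and completeness of the fixed locus in two lines. Your filtration argument for limit existence is correct and gives more geometric information (an explicit associated-graded representative of the limit), so it has its own merits. However, your compactness argument has a genuine flaw: you assert that ``the ungraded analog is the classical noncommutative Hilbert scheme of $\C\langle x,y,z\rangle$, which is projective by the same Quot-scheme argument, and the graded refinement proceeds in parallel.'' The ungraded noncommutative Hilbert scheme \emph{is} $M_n$ itself and is only quasi-projective, not projective (already $M_1\cong\C^3$). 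The Quot-scheme boundedness that makes the graded stratum projective is a specifically \emph{graded} phenomenon: the fixed Hilbert function $(1,d_1,\dots)$ forces the defining graded ideal to contain everything in large degree, so one parametrizes by finitely many Grassmannian conditions. Equivalently, in the graded setting $A,B,C$ are degree-raising (hence nilpotent), so the affine quotient of the stratum is a point and any GIT quotient is automatically projective; this does not ``proceed in parallel'' from the ungraded case, which lacks both features. So your conclusion is right but the stated justification for the key projectivity step is wrong as written, and the paper's route via $\pi_n\colon M_n\to M_n^0$ avoids the issue entirely.
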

\begin{proof} In the notation of Remark~\ref{rem_expl}, 
consider the $T$-action
\[ 
(t_{1},t_{2},t_{3})\circ (A,B,C,v) =(t_{1}
A,t_{2}B,t_{3}C,t_{1}t_{2}t_{3}v)
\]
on the space 
\[U_n\subset\Hom(V_n,V_n)^{\times 3}\times V_n\] 
of maps with cyclic vector.  The map $(A,B,C,v)\mapsto {\rm Tr}
[A,B]C$ is $T$-equivariant with respect to the character $\chi
(t_{1},t_{2},t_{3})=t_{1}t_{2}t_{3}$. Moreover, the $T$-action on
$U_n$ commutes with the $\GL(V_n)$-action acting freely on~$U_n$, so
descends to the quotient $M_n$. The $T$-action on~$U_{n}$ lifts to the
linearization and hence defines a linearized action of~$T$ on~$M_{n}$.

Consider $\C^*$-action on $\Hom(V_n,V_n)^{\times 3}\times V_n$ induced
by the diagonal subgroup in $T$.  Let \[M^0_n=\Hom(V_n,V_n)^{\times
3}\times V_n /\!/_0 \GL(V_n)\] be the affine quotient, the GIT
quotient at zero stability. Then by general GIT, there is a natural
proper map $\pi_n\colon M_n\to M^0_n$, which is $\C ^{*}$-equivariant.
On the other hand, it is clear that the only $\C^*$-fixed point in
$M^0_n$ is the image of the origin in $\Hom(V_n,V_n)^{\times 3}\times
V_n$, and all $\C^*$-orbits in $M^0_n$ have this point in their
closure as $\lambda\to 0$.  By the properness of $\pi_n$, the
$\C^*$-fixed points in $M_n$ form a complete subvariety, and all
limits as $\lambda\to 0$ exist. Thus the diagonal $\C^*$-action on
$M_n$ is circle compact.
\end{proof}

The function $f_n:M_{n}\to \C $ is not proper, so we cannot apply
Proposition~\ref{prop: if f is proper and C* equivariant then
vanishing cycle is X1-X0}, but as a corollary to the above lemma, we
may apply Proposition~\ref{prop: if f is C* equiv and circle compact,
then vanishing cycle is X1-X0} instead.

\begin{corollary}\label{cor: motivic_difference}
For each $n$, the absolute motivic vanishing cycle of the
family~$f_n\colon M_n \to \C$ can be computed as the motivic
difference
\[  
[\phi_{f_n}]=[f_n^{-1}(1)]-[f_n^{-1}(0)]\in\M_\C\subset\M_\C^\muhat.
\]
\end{corollary}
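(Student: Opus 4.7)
The plan is to obtain the corollary as a direct application of Proposition~\ref{prop: if f is C* equiv and circle compact, then vanishing cycle is X1-X0}. That proposition concludes exactly the desired equality $[\phi_f] = [X_1] - [X_0] \in \M_\C$ whenever $f\colon X \to \C$ is a regular function on a smooth quasi-projective variety that is equivariant with respect to a linearized action of a connected torus $T$ by a primitive character, provided some $\C^*\subset T$ acts in a circle-compact manner. The entire task therefore reduces to verifying these hypotheses for the pair $(M_n, f_n)$.

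First, I would note that $M_n$ is smooth and quasi-projective by construction: Proposition~\ref{prop:sup} realizes $M_n$ as the GIT quotient $U_n/\GL(V_n)$ of the open stable locus $U_n$ (on which $\GL(V_n)$ acts freely) inside an affine space. Thus the first hypothesis of Proposition~\ref{prop: if f is C* equiv and circle compact, then vanishing cycle is X1-X0} holds. Second, the torus equivariance and circle compactness are exactly the two conclusions of the preceding lemma: it supplies a linearized $T = (\C^*)^3$-action on $M_n$ with respect to which $f_n$ is equivariant under the character $\chi(t_1,t_2,t_3) = t_1 t_2 t_3$, and it asserts that the diagonal $\C^* \subset T$ acts on $M_n$ in a circle-compact fashion. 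The character $\chi$ is primitive since it is surjective onto $\C^*$ (alternatively, the corresponding element of the character lattice $\Z^3$ is $(1,1,1)$, which is primitive).

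With the hypotheses verified, Proposition~\ref{prop: if f is C* equiv and circle compact, then vanishing cycle is X1-X0} applies and immediately yields both that $[\phi_{f_n}]$ lies in the subring $\M_\C \subset \M_\C^{\hat\mu}$ and that it equals $[f_n^{-1}(1)] - [f_n^{-1}(0)]$. There is no nontrivial obstacle to overcome here; the entire content of the corollary is packaged in the preceding lemma (which does the work of exhibiting the torus action and establishing circle compactness via the properness of the map $\pi_n\colon M_n \to M_n^0$ to the affine quotient) and in Proposition~\ref{prop: if f is C* equiv and circle compact, then vanishing cycle is X1-X0} itself (whose proof, deferred to the appendix, handles the nontrivial motivic-integration argument via a Bia{\l}ynicki-Birula stratification).
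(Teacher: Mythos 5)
Your proposal is correct and follows exactly the paper's route: the corollary is obtained by verifying that $(M_n, f_n)$ satisfies the hypotheses of Proposition~\ref{prop: if f is C* equiv and circle compact, then vanishing cycle is X1-X0} (smooth quasi-projective $M_n$, the $T=(\C^*)^3$-action with primitive character $\chi$, and circle compactness of the diagonal $\C^*$, all supplied by the preceding lemma and Proposition~\ref{prop:sup}) and then applying that proposition.
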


\subsection{The virtual motive of the Hilbert scheme} 
\label{subsec: virtual motives for C3}  As a consequence
of Proposition~\ref{prop:sup}, the singular space $\Hilb^n(\C^3)$
acquires relative and absolute virtual motives
$[\Hilb^n(\C^3)]_\relvir$ and $[\Hilb^n(\C^3)]_\vir$.  Let us stress
that, a priori, these classes depend on the chosen linear \CY
structure on~$\C^3$.

Define the motivic \DT  partition function of $\C^3$ to be
\[
Z_{\C^3}(t) = \sum_{n=0}^\infty [\Hilb^n(\C^3)]_\vir \,t^n
\in\M_\C[[t]].
\] 
By Proposition~\ref{prop_spec_to_kai},
\[ 
W\left([\Hilb^n(\C^3)]_\vir, q^\half=-1\right)\in\Z
\]
is the \DT invariant, the signed number of 3-dimen\-sio\-nal partitions
of~$n$. Hence the Euler characteristic specialization
\[ 
\chi Z_{\C^3}(t)= M(-t)
\]
is the signed MacMahon function.

Using the stratification
\[ \Hilb^n(\C^3) = \coprod_{\alpha\vdash n} \Hilb_\alpha^n(\C^3),
\]
the relative virtual motive
\[ [\Hilb^n(\C^3)]_\relvir\in\M^\muhat_{\Hilb^n(\C^3)}
\]
can be restricted to define the relative virtual motives 
\[[\Hilb_\alpha^n(\C^3)]_\relvir\in \M^\muhat_{\Hilb_\alpha^n(\C^3)}\]
for the strata. We additionally define the relative virtual motive of
the punctual Hilbert scheme
\[
[\Hilb ^{n} (\C ^{3})_{0}]_{\relvir }\in \M ^{\muhat }_{\Hilb ^{n} (\C
^{3})_{0}}
\]
by restricting $[\Hilb ^{n}_{(n)} (\C ^{3})]_{\relvir }$ to
\[
\{0 \}\times \Hilb ^{n} (\C ^{3})_{0}\subset \C ^{3}\times \Hilb ^{n}
(\C ^{3})_{0}\cong \Hilb ^{n}_{(n)} (\C ^{3}).
\]
Associated to each relative virtual motive, we have the absolute motives
\[
[\Hilb_\alpha^n(\C^3)]_\vir\,,\,\,[\Hilb ^{n} (\C^{3})_{0}]_\vir\in\M^\muhat_\C,
\] 
and the absolute motives satisfy 
\[ 
[\Hilb^n(\C^3)]_\vir = \sum_{\alpha\vdash n}
[\Hilb_\alpha^n(\C^3)]_\vir.
\]
We now collect some properties of these virtual motives. 
\begin{proposition}\begin{enumerate}
\item The absolute virtual motives $[\Hilb_\alpha^n(\C^3)]_\vir$
and $[\Hilb^n(\C^3)_0]_\vir$ live in the subring $\M_\C\subset\M_\C^\muhat$.
\item On the closed stratum, 
\[[\Hilb^n_{(n)}(\C^3)]_\vir = \LL^3\cdot[\Hilb^n(\C^3)_0]_\vir\in\MC.\] 
\item More generally, for a general stratum, 
\[ [\Hilb^n_\alpha(\C^3)]_\vir = \pi_{G_\alpha}\left(\left[\prod_i (\C^3)^{\alpha_i}\setminus \Delta\right]\cdot \prod_i\left[\Hilb^{i}(\C^3)_0^{\alpha_i}\right]_\vir\right),\]
where $\pi_{G_\alpha}$ denotes the quotient map~\eqref{eq: quotient map on rings}.
\end{enumerate}
\label{prop_C3properties}\end{proposition}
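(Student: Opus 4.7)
The plan is to combine the torus-equivariant vanishing cycle formula of Proposition~\ref{prop: relative and effective version of vanishing cycle = X1 - X0 proposition} with the translation action of $\C^3$ and a Thom--Sebastiani argument for disjoint clusters.

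For (1), I would invoke Proposition~\ref{prop: relative and effective version of vanishing cycle = X1 - X0 proposition}: Lemma~2.12 provides a circle-compact $T$-equivariant linearization for which $f_n$ transforms by a primitive character, and the central fiber $f_n^{-1}(0)$ can be checked to be reduced from the explicit equations of Remark~\ref{rem_expl}. The affinization of $\Hilb^n(\C^3)$ is $\Sym^n(\C^3)$ via the Hilbert--Chow morphism $\pi$, so the proposition gives that the relative motivic vanishing cycle pushed to the affinization lies in $\M_{\Sym^n(\C^3)}$. Each stratum $\Hilb^n_\alpha(\C^3) = \pi^{-1}(\Sym^n_\alpha(\C^3))$ is the preimage of a locally closed stratum; since $\pi$ is proper, base change yields that $\pi_! [\phi_{f_n}]_{\Hilb^n_\alpha(\C^3)} = [\phi_{f_n}]_{\Sym^n(\C^3)}|_{\Sym^n_\alpha(\C^3)}$ lies in the non-equivariant ring. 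Pushing forward to a point gives $[\Hilb^n_\alpha(\C^3)]_\vir \in \M_\C$, and the punctual case follows by restricting further to the fiber over $0 \in \Sym^n_{(n)}(\C^3)$.

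For (2), I would exploit the translation action of $\C^3$ on $M_n$. In the matrix model of Remark~\ref{rem_expl}, a translation by $(a,b,c) \in \C^3$ acts as $(A,B,C,v) \mapsto (A-aI, B-bI, C-cI, v)$; since the trace of a commutator vanishes, $\Tr[A-aI, B-bI](C-cI) = \Tr[A,B]C$, so $f_n$ is invariant. The closed stratum $\Hilb^n_{(n)}(\C^3) \cong \C^3 \times \Hilb^n(\C^3)_0$ is therefore a trivial $\C^3$-bundle on which $f_n$ is the pullback of its restriction to $\{0\} \times \Hilb^n(\C^3)_0$, giving
\[
[\phi_{f_n}]|_{\Hilb^n_{(n)}(\C^3)} = [\C^3] \cdot [\phi_{f_n}]|_{\{0\} \times \Hilb^n(\C^3)_0}.
\]
Multiplying by $-\LL^{-\dim M_n/2}$ and using $[\C^3] = \LL^3$ yields the formula in (2).

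For (3), the strategy is to decompose $(M_n, f_n)$ locally over $\Hilb^n_\alpha(\C^3)$. A cluster of type $\alpha$ is a disjoint union of subclusters supported at distinct points; in the matrix model of Remark~\ref{rem_expl}, such a configuration is simultaneously block-diagonalizable after choice of an \'etale slice into blocks of sizes $i$ with multiplicity $\alpha_i$, and the superpotential $\Tr[A,B]C$ splits as the direct sum $\bigoplus_i f_i^{\oplus \alpha_i}$. The motivic Thom--Sebastiani theorem (Theorem~\ref{thm_thom_seb}) then expresses the relative motivic vanishing cycle of $f_n$ as a convolution of the factors, which by (1) coincides with the ordinary product in $\M$ since all classes carry trivial $\muhat$-action. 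The free $G_\alpha$-action permutes identical blocks, and taking the $\pi_{G_\alpha}$-quotient combined with (2) applied to each punctual factor yields the displayed formula. The main obstacle is rigorously establishing the local product decomposition of the GIT quotient $M_n$ together with $f_n$ over the stratum, since $M_n$ is defined as a global quotient; this requires an \'etale slice through each configuration of disjoint clusters that is simultaneously compatible with the GIT structure and reflects the block decomposition of the superpotential.
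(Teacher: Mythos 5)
Your arguments for parts (1) and (2) follow the paper's reasoning closely: translation invariance of $f_n$ gives the $\LL^3$ factor over the closed stratum, and Proposition~\ref{prop: relative and effective version of vanishing cycle = X1 - X0 proposition} applied over the affinization $\Sym^n(\C^3)$ (with the reducedness of $f_n^{-1}(0)$) kills the monodromy on each stratum. (Small slip: the circle-compact equivariant structure is supplied by the unnumbered Lemma just before Corollary~\ref{cor: motivic_difference}, not ``Lemma~2.12''.) The paper derives the scaling relation in (2) first, a priori only in $\M^\muhat_\C$, and then feeds it back through (1) to conclude it lands in $\M_\C$; you reorder these two points, which is harmless.

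For part (3) you have the right strategy — Thom--Sebastiani applied to a block decomposition of the superpotential on a suitable cover of the stratum — but the step you flag at the end as an ``obstacle'' is in fact the crux, and your proposal leaves it unresolved. The paper supplies exactly the missing \'etale cover by invoking \cite[Lemma~4.10]{Behrend-Fantechi08}: the space
\[
V_\alpha \;=\; \prod_i \bigl(\Hilb^i_{(i)}(\C^3)\bigr)^{\alpha_i}\setminus\tilde\Delta \;\longrightarrow\; \Hilb^n_\alpha(\C^3)
\]
is a Galois $G_\alpha$-cover of the stratum whose points are \emph{ordered} tuples of clusters with disjoint support, and in the matrix model such a tuple is realized precisely by a direct sum $(A,B,C,v)=\bigoplus_j (A_j,B_j,C_j,v_j)$, for which $\Tr A[B,C]=\sum_j\Tr A_j[B_j,C_j]$ holds \emph{on the nose}, not merely after passing to a Luna slice of the GIT quotient. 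This makes the application of Theorem~\ref{thm_thom_seb} clean. You should also note that part (1) is what lets you replace the $\star$-product in Thom--Sebastiani by the ordinary product, and that Lemma~\ref{lemma: symm grp acts} is needed to make sense of the $G_\alpha$-action on $\prod_i [\Hilb^i(\C^3)_0]_\vir^{\alpha_i}$ before applying $\pi_{G_\alpha}$ — that class is a formal difference of varieties, not a variety, so ``permuting identical blocks'' requires this lemma to be meaningful.
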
 

\proof We start by proving (1) and (2) together. On the one hand, consider the closed stratum 
\[\Hilb^n_{(n)}(\C^3)\cong \C^3\times\Hilb^n(\C^3)_0,\] 
with projections $p_i$ to the factors.
By the invariance of the construction under the translation
action of $\C^3$ on itself, the relative virtual motive is
\[ [\Hilb^n_{(n)}(\C^3)]_\relvir= p_2^*[\Hilb^n(\C^3)_0]_\relvir.
\]
Taking absolute motives, 
\begin{equation} [\Hilb^n_{(n)}(\C^3)]_\vir = \LL^3\cdot[\Hilb^n(\C^3)_0]_\vir,
\label{eq deepest stratum}\end{equation}
with both sides living a priori in ~$\M_\C^\muhat$.

On the other hand, as it is well known, the affinization of the Hilbert scheme $\Hilb^n(\C^3)$ is 
the symmetric product. The conditions of
Proposition~\ref{prop: relative and effective version of vanishing cycle = X1 - X0 proposition}, 
hold, since the cubic hypersurface given by the function $f_n$ is reduced.
Applying Proposition~\ref{prop: relative and effective version of vanishing cycle = X1 - X0 proposition}, 
we see that the relative virtual motives on the strata of the Hilbert scheme have 
trivial $\muhat$-action over the corresponding strata in the symmetric product. Hence
the absolute motives $[\Hilb^n_{\alpha}(\C^3)]_\vir$ also carry trivial $\muhat$-action. 
The same statement for the punctual Hilbert scheme then follows from~\eqref{eq deepest stratum},
with~\eqref{eq deepest stratum} holding in fact in $\MC$. 

To prove (3), consider the diagram
\[\begin{diagram}[height=1.8em,width=1.8em,nohug]
V_\alpha && \rInto && W_\alpha && \rInto && \displaystyle\prod_i  \Hilb^{i}(\C^3)^{\alpha_i}\\
\dTo &&  && \dTo \\ 
\\
\Hilb^n_\alpha(\C^3) && \rInto && U_\alpha && \rInto && \Hilb^n(\C^3)\\
&&&&
\end{diagram}\] from~\cite[Lemma 4.10]{Behrend-Fantechi08}. Here
$W_\alpha$ is the locus of points in the product $\prod_i
\Hilb^{i}(\C^3)^{\alpha_i}$ which parametrizes
subschemes with disjoint support. The first vertical map is Galois
whereas the second one is \'etale. The first inclusion in each row is
closed whereas the second one is open. 

Consider the construction of the Hilbert scheme, as a space of commuting matrices with 
cyclic vector, in a neighborhood of~$U_\alpha$ in the space of
matrices. Pulling back to the cover~$V_\alpha$, we see that a point of
$V_\alpha$ is represented by tuples of commuting matrices $X_j, Y_j,
Z_j$ acting on some linear spaces $V_j$, with generating vectors
$v_j$. The covering map is simply obtained by direct sum: $V=\oplus
V_j$ acted on by $X=\oplus X_j$ and $Y$, $Z$ defined similarly.  The
vector $v=\oplus v_j$ is cyclic for $X,Y,Z$ exactly because the
eigenvalues of the $X_j, Y_j, Z_j$ do not all coincide for
different~$j$; this is the disjoint support property of points
of~$V_\alpha$.  

On the other hand, clearly
\[ \Tr X[Y,Z] = \sum_j \Tr X_j[Y_j, Z_j]
\]
for block-diagonal matrices. Thus, the Thom--Sebastiani Theorem~\ref{thm_thom_seb}
implies that the pullback relative motive 
\[q_\alpha^*[\Hilb_\alpha^n(\C^3)]_\relvir\in \M^\muhat_{V_\alpha}\]
is equal to the restriction to $V_\alpha$ of the $\star$-products of
the relative virtual motives of the punctual Hilbert schemes
$\Hilb_{(i)}^{i}(\C^3)$. Taking the associated absolute motives, using the locally trivial
fibration on $V_\alpha$ along with (2), we get
\[ q_\alpha^*[\Hilb_\alpha^n(\C^3)]_\vir=\left[\prod_i (\C^3)^{\alpha_i}\setminus \Delta\right]\cdot \prod_i\left[\Hilb^{i}(\C^3)_0^{\alpha_i}\right]_\vir\in\MC.
\] 
Here, using (1), the $\star$-product became the ordinary product.
By Lemma \ref{lemma: symm grp acts}, the $G_\alpha$-action extends to this
class, and (3) follows.
\endproof

\subsection{Relationship to the Kontsevich--Soibelman definition}

Let $E$ be an object in an ind-constructible \CY
$A_\infty$-category~$\CC$ (see~\cite{Kontsevich-Soibelman} for the
definitions of all these terms). Kontsevich and Soibelman associate to
$E$ a motivic weight $w(E)$. This weight lives in a certain motivic
ring $\bar \M^\muhat_\C$, which is a completion of the ring
$\M_\C^\muhat$ used above, quotiented by the equivalence relation,
explained in~\cite[Section 4.5]{Kontsevich-Soibelman}, which
essentially says that two motivic classes are equivalent if all
cohomological realizations of these classes coincide. They claim
moreover that, given a moduli space $S$ of objects of $\CC$, their
definition gives an element in a piecewise-relative motivic ring
$\bar\M_S^\muhat$.

\cite[Definition 17]{Kontsevich-Soibelman} appears closely related to
our definition of the virtual motive of a degeneracy locus above. It
relies on a local description of the moduli space $S$ as the zeros of
a formal functional $W$ cooked up from the $A_\infty$-structure on
$\CC$.  The definition is essentially like ours, using the motivic
vanishing cycle of the (local) function $W$, twisted by half the
dimension of $S$, as well as certain additional factors arising from a
choice of what they call orientation data on $\CC$.

The relevant category for our discussion is an $A_\infty$-enhancement
of some framed version of the derived category of sheaves on $\C^3$,
more precisely the subcategory thereof generated by the structure
sheaf of $\C^3$ and structure sheaves of points. Ideal sheaves of
points in this category have as their moduli space the Hilbert
scheme. It would be interesting to construct directly some
$A_\infty$-structure on this category, as well as a consistent set of
orientation data (compare~\cite{Davison}). 
Should this be possible, we expect that our motivic
invariants, perhaps up to some universal constants, agree with the
Kontsevich--Soibelman definition; compare \cite[end of Section
7.1]{Kontsevich-Soibelman}. For the numerical invariants of the
Hilbert scheme, see also the discussion in \cite[Section
6.5]{Kontsevich-Soibelman}.

\subsection{Computing the motivic partition function of $\C^3$} 
$\,$

\smallskip

The core result of this paper is the computation of 
\[
Z_{\C ^{3}} (t) = \sum _{n=0}^{\infty } [\Hilb ^{n} (\C ^{3})]_{\vir
}\, t^{n},
\]
the motivic \DT partition function of $\C ^{3}$.

\begin{theorem} The motivic partition function $Z_{\C ^{3}} (t)$ lies
in $\M _{\C }[[t]] \subset \M^{\muhat } _{\C }[[t]] $ and is given by
\begin{equation}
Z_{\C^3}(t) = \prod_{m=1}^{\infty } \prod _{k=0}^{m-1}
\left(1-\LL ^{k+2-m/2}t^{m} \right)^{-1}.
\label{eq_hilb_C3}
\end{equation}
\label{thm_C3}
\end{theorem}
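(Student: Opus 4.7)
The plan is to reduce the computation to the Feit--Fine formula (Proposition~\ref{prop: Feit-Fine}) via a stratification of the ambient matrix space. By Corollary~\ref{cor: motivic_difference} and $\dim M_n = 2n^2+n$, we have
\[
[\Hilb^n(\C^3)]_\vir = -\LL^{-n^2-n/2}\cdot \frac{D_n}{[\GL_n]},
\]
where $D_n = [g_n^{-1}(1)\cap U_n] - [g_n^{-1}(0)\cap U_n]$ and $g_n(A,B,C)=\Tr(A[B,C])$ extends $f_n$ to $W_n\times V_n$ with $W_n:=\Hom(V_n,V_n)^3$. On the bare ambient space $W_n$ itself, an elementary case analysis on whether $[B,C]=0$ (using that $g_n$ is then a linear form in $A$) yields the input identity
\[
[g_n^{-1}(1)] - [g_n^{-1}(0)] = -\LL^{n^2}[C_n] \qquad \text{in }\MC.
\]

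The key step is to stratify $W_n\times V_n$ (not just $U_n$) by the dimension $r$ of the subrepresentation $\langle v\rangle_{A,B,C}$ generated by $v$; the open cyclic locus $U_n$ is the top stratum $r=n$. For a fixed $V'\subset V_n$ of dimension $r$, the rank-$r$ stratum fibers as $U_r\times W_{n-r}\times\Hom(V_{n-r},V_r)^3$, recording a cyclic representation on $V'$, an arbitrary representation on $V_n/V'$, and free extension data. Because $A,B,C$ preserve $V'$ they are block upper triangular, and a direct trace computation reveals that $g_n$ pulls back to $g_r + g_{n-r}$ with no dependence on the extension blocks. Using that both $g_r$ on $U_r$ and $g_{n-r}$ on $W_{n-r}$ carry $\C^{*}$-actions with primitive character (restricted from the $T$-action of~\S\ref{subsec:triv}), a Thom--Sebastiani-style convolution gives
\[
[(g_r+g_{n-r})^{-1}(1)] - [(g_r+g_{n-r})^{-1}(0)] = ([Y_r^0]-[Y_r])\cdot([X_{n-r}]-[X_{n-r}^0]),
\]
where $Y_r^0,Y_r$ (resp.\ $X_s^0,X_s$) denote the central and general fibers of $g_r$ on $U_r$ (resp.\ of $g_s$ on $W_s$).

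Summing the stratum contributions, using $[X_s]-[X_s^0]=-\LL^{s^2}[C_s]$ together with the ambient identity $[g_n^{-1}(1)]-[g_n^{-1}(0)] = -\LL^{n^2+n}[C_n]$ on $W_n\times V_n$, and then dividing by $[\GL_n]$ and simplifying via the standard identity $\Lbinom{n}{r}/[\GL_n]=\LL^{-r(n-r)}/([\GL_r][\GL_{n-r}])$, leads to the recursion
\[
\sum_{r=0}^n \LL^{n^2-r^2}\,\tilde c_{n-r}\,\tilde D_r = -\LL^{n^2+n}\,\tilde c_n,\qquad \tilde D_r := D_r/[\GL_r].
\]
In terms of the generating function $\bar D(t):=\sum_{n\geq 0} \LL^{-n^2}\tilde D_n\,t^n$, this is equivalent to the convolution identity $\bar D(t)\cdot C(t)=-C(\LL t)$, hence $\bar D(t)=-C(\LL t)/C(t)$ using Proposition~\ref{prop: Feit-Fine}.

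Unpacking the $\LL^{-n^2-n/2}$ normalization gives
\[
Z_{\C^3}(t) = -\bar D(\LL^{-\half}t) = \frac{C(\LL^{\half}t)}{C(\LL^{-\half}t)},
\]
and a shift $j\mapsto j-m$ of the inner Feit--Fine index in the denominator telescopes the ratio so that only factors with $-m\leq j<0$ survive; re-indexing these by $k:=-1-j$ yields exactly $\prod_{k=0}^{m-1}(1-\LL^{k+2-m/2}t^m)^{-1}$, matching~\eqref{eq_hilb_C3}. The main obstacle is the stratification-plus-convolution step: one must verify that the off-diagonal extension block truly drops out of the trace, and carefully marshal the $\C^{*}$-equivariance on both the cyclic factor $U_r$ and the unframed factor $W_{n-r}$ so that the pointwise convolution collapses to a clean product of motivic differences rather than an awkward sum over $s\in\C$.
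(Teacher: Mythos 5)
Your proposal is correct and follows essentially the same route as the paper's proof: stratify the framed matrix space by the dimension of the $(A,B,C)$-span of~$v$, observe that the superpotential splits as a sum over the diagonal blocks with no dependence on the extension block, exploit the $\C^{*}$-triviality over nonzero values so that the motivic difference of fibers multiplies across the sum (your ``Thom--Sebastiani-style'' product is exactly what the paper derives term-by-term; your asymmetric placement of the signs $[Y_r^0]-[Y_r]$ versus $[X_{n-r}]-[X_{n-r}^0]$ correctly absorbs the minus sign in $\Delta(f\oplus g)=-\Delta(f)\Delta(g)$), and then close via the Feit--Fine generating function and the twisted quotient $C(\LL^{1/2}t)/C(\LL^{-1/2}t)$, telescoping the $j$-index to get the finite inner product. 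The only differences are cosmetic: you stage the computation on the bare matrix space $W_n$ and adjoin the cyclic vector where needed, you state the product factorization of motivic differences as a standalone principle rather than computing it out, and your description of the index shift in the telescope (shifting the denominator vs.\ the numerator) is a bit garbled though the resulting re-indexing $k=-1-j$ and final exponent $k+2-m/2$ are correct.
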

\proof Recall that \[\Hilb^n(\C^{3})=\{df_n=0 \},\] where $f_n$~is the
function
\[
f_n (A,B,C,v) = \Tr A[B,C]
\] 
defined on the smooth variety
\[
M_n=U_n/\GL(V_{n}),
\]
where $V_n$ is an $n$-dimensional vector space, and 
\[ U_n\subset \Hom (V_{n},V_{n})^{3}\times V_{n}
\]
is the open set of points $(A,B,C,v)$ satisfying the stability condition that
monomials in $A,B,C$ applied to $v$ generate $V_{n}$.

By Corollary~\ref{cor: motivic_difference}, to compute the virtual
motive, we need to compute the motivic difference of the fibers
$f_n^{-1}(1)$ and $f_n^{-1}(0)$.

Let 
\[
Y_{n} = \{(A,B,C,v): \Tr A[B,C]=0 \}\subset \Hom (V_{n},V_{n})^{3}\times
V_{n},
\]
and let
\[
Z_{n} = \{(A,B,C,v): \Tr A[B,C]=1 \}\subset \Hom (V_{n},V_{n})^{3}\times
V_{n}.
\]
The isomorphism
\[
\Hom (V_{n},V_{n})^{3} \times V_{n}\backslash Y_{n} \cong \C ^{*}\times Z_{n}
\] 
given by 
\[
(A,B,C,v)\mapsto \left(\Tr A[B,C], (\Tr A[B,C])^{-1}A,B,C \right)
\]
yields the motivic relation
\[
[Y_{n}] + (\LL -1) [Z_{n}] = [\Hom (V_{n},V_{n})\times
V_{n}] = \LL ^{3n^{2}+n},
\]
equivalently
\[
(1-\LL )\left([Y_{n}]-[Z_{n}] \right) = \LL ^{3n^{2}+n} - \LL  [Y_{n}].
\]

The space $Y_{n}$ stratifies as a union
\[
Y_{n}=Y_{n}' \sqcup Y_{n}''
\]
where $Y_{n}'$ consists of the locus where $B$ and $C$ commute and
$Y_{n}''$ is its complement. Projections onto the $B$ and $C$ factors
induce maps
\[
Y_{n}'\to C_{n},\quad Y_{n}''\to \{\C^{2n^{2}}  \setminus C_{n} \}
\]
where $C_{n}\subset \C^{2n^{2}}$ is the commuting variety. The first
map splits as a product $Y_{n}'\cong \C^{n^{2}+n}\times C_{n}$ and
the second map is a Zariski trivial fibration with fibers isomorphic
to $\C^{n^{2}-1+n}$. Indeed, for fixed $B$ and $C$ with $[B,C]\neq
0$, the condition $\Tr A[B,C]=0$ is a single non-trivial linear
condition on the matrices $A$. Moreover, the fibration is Zariski
trivial over the open cover whose sets are given by the condition that
some given matrix entry of $[B,C]$ is non-zero.
Thus the above stratification yields the equation of motives
\[
[Y_{n}] = \LL ^{n^{2}+n}[C_{n}] +\LL ^{n^{2}-1+n}\left(\LL ^{2n^{2}}-[C_{n}]
\right).
\]

Substituting into the previous equation and canceling terms we obtain
\begin{align*}
(1-\LL )\left([Y_{n}]-[Z_{n}] \right) = -\LL ^{n^{2}+n} (\LL [C_{n}]-[C_{n}]).
\end{align*}
Writing 
\[w_{n} = [Y_{n}]-[Z_{n}],\]
we get the basic equation
\begin{equation}\label{eqn: Wn=L^n(n+1)Cn}
w_{n} = \LL ^{n (n+1)}[C_{n}].
\end{equation}

We now need to incorporate the stability condition. We call the
smallest subspace of $V_{n}$ containing $v$ and invariant under the
action of $A$, $B$, and $C$ the \emph{$(A,B,C)$-span of $v$}. Let
\[
X^{k}_{n} = \{(A,B,C,v):\text{the $(A,B,C)$-span of $v$ has dimension
$k$} \}
\]
and let
\begin{align*}
Y_{n}^{k}& = Y_{n}\cap X^{k}_{n},\\
Z_{n}^{k}& =Z_{n}\cap X_{n}^{k}.
\end{align*}

We compute the motive of $Y_{n}^{k}$ as follows. There is a Zariski
locally trivial fibration
\[
Y^{k}_{n}\to Gr (k,n)
\]
given by sending $(A,B,C,v)$ to the $(A,B,C)$-span of $v$.

To compute the motive of the fiber of this map, we choose a basis of
$V_{n}$ so that the first $k$ vectors are in the $(A,B,C)$-span of
$v$. In this basis, $(A,B,C,v)$ in a fixed fiber all have the form
\[
A=\left(\begin{matrix} A_{0}&A'\\0&A_{1}  \end{matrix} \right)\quad 
B=\left(\begin{matrix} B_{0}&B'\\0&B_{1}  \end{matrix} \right)\quad 
C=\left(\begin{matrix} C_{0}&C'\\0&C_{1}  \end{matrix} \right)\quad 
v=\left(\begin{matrix} v_{0}\\0  \end{matrix} \right)\quad 
\]
where $(A_{0},B_{0},C_{0})$ are $k\times k$ matrices, $(A',B',C')$ are
$k\times (n-k)$ matrices, $(A_{1},B_{1},C_{1})$ are $(n-k)\times
(n-k)$ matrices, and $v_{0}$ is a $k$-vector. 

Thus a fiber of $Y^{k}_{n}\to Gr (k,n)$  is given by the locus of
\[
\{(A_{0},B_{0},C_{0},v_{0}),(A_{1},B_{1},C_{1}),(A',B',C')\}
\]
satisfying
\[
\Tr A[B,C] = \Tr A_{0}[B_{0},C_{0}]+\Tr A_{1}[B_{1},C_{1}]=0.
\]
This space splits into a factor $\C^{3 (n-k)k}$, corresponding to the
triple $(A',B',C')$, and a remaining factor which stratifies into a union of 
\[
\left\{\Tr A_{0}[B_{0},C_{0}]=\Tr A_{1}[B_{1},C_{1}]=0 \right\}
\]
and
\[
\left\{\Tr A_{0}[B_{0},C_{0}]=-\Tr A_{1}[B_{1},C_{1}]\neq 0 \right\}.
\]
Projection on the $(A_{0},B_{0},C_{0},v_{0})$ and
$(A_{1},B_{1},C_{1})$ factors induces a product structure on the above
strata so that the corresponding motives are given by
\[
[Y^{k}_{k}]\cdot [Y_{n-k}]\LL ^{- (n-k)}
\]
and
\[
(\LL -1)[Z^{k}_{k}][Z_{n-k}]\LL ^{- (n-k)}
\]
respectively. Putting this all together yields
\begin{eqnarray*}
[Y_{n}^{k}]& = & \LL ^{3 (n-k)k}\Lbinom{n}{k}\left([Y^{k}_{k}]\cdot [Y_{n-k}]\cdot
\LL ^{- (n-k)}  \right.\\
& & \ \ \ \ \ \ \ \ \ \ \ \ \ \ \ \ \ \ +\left.(\LL -1)\cdot [Z^{k}_{k}]\cdot [Z_{n-k}]\cdot \LL ^{- (n-k)}
\right).
\end{eqnarray*}
A similar analysis yields
\begin{multline*}
[Z^{k}_{n}]  =  \LL ^{3 (n-k)k}\Lbinom{n}{k}\left([Y^{k}_{k}]\cdot [Z_{n-k}]\cdot
\LL ^{- (n-k)} \right. +\\
 (\LL -2) \cdot[Z^{k}_{k}]\cdot [Z_{n-k}]\cdot \LL ^{- (n-k)}  +\left.[Z^{k}_{k}]\cdot [Y_{n-k}]\cdot \LL ^{- (n-k)} \right).
\end{multline*}
We are interested in the difference
\begin{align*}
w^{k}_{n}&=[Y_{n}^{k}]-[Z^{k}_{n}]\\
&=\LL ^{(3k-1) (n-k)}\Lbinom{n}{k}\left(w_{n-k}[Y^{k}_{k}] -
w_{n-k}[Z^{k}_{k}] \right)\\
&=\LL ^{(n-k) (n+2k)}\Lbinom{n}{k}\,[C_{n-k}]\,w^{k}_{k},
\end{align*}
where we used~\eqref{eqn: Wn=L^n(n+1)Cn} for the last equality.

Observing that $Y_{n}=\sqcup_{k=0}^{n}Y_{n}^{k}$ and $Z_{n}=\sqcup
_{k=0}^{n}Z^{k}_{n}$, we get
\[
w^{n}_{n} = w_{n} - \sum _{k=0}^{n-1}w^{k}_{n},
\] 
into which we substitute our equations for $w_{n}$ and $w^{k}_{n}$ to
derive the following recursion for $w^{n}_{n}$:
\begin{equation}\label{eqn: recursion for Wnn}
w^{n}_{n}=\LL ^{n (n+1)}[C_{n}] -\sum _{k=0}^{n-1} \Lbinom{n}{k} \LL ^{(n-k)
(n+2k)}[C_{n-k}]w^{k}_{k}.
\end{equation}

We can now compute the virtual motive of the Hilbert scheme. By
Proposition~\ref{prop: if f is C* equiv and circle compact, then
vanishing cycle is X1-X0}, we get
\begin{align*}
[\phi_{f_n}] &= - [f_n^{-1}(0)]+[f_n^{-1}(1)] \\
&=- \frac{[Y_{n}^{n}]}{[\GL _{n} (\C )]} +\frac{[X^{n}_{n}]}{[\GL _{n} (\C )]}\\
&= -\frac{{w}^{n}_{n}}{\LL ^{\binom{n}{2}}\Lfact{n}}.
\end{align*}
The dimension of $M_n$ is $2n^{2}+n$, so we find
\begin{align*}
[\Hilb^n(\C^{3})]_{\vir} =& -\LL ^{-n^{2}-n/2}[\phi _{f_{n}}]\\
=&\LL  ^{-\frac{3n^{2}}{2}} \frac{w^{n}_{n}}{\Lfact{n}}.
\end{align*}

Working in the ring  $\M_\C[(1-\LL ^n)^{-1}\colon {n\geq 1}]$, 
we divide~\eqref{eqn: recursion for Wnn} by
$\LL ^{3n^{2}/2}\Lfact{n}$ and rearrange to obtain
\[
\tilde{c}_{n}\, \LL ^{n/2} = \sum _{k=0}^{n} \tilde{c}_{n-k}\,[\Hilb ^{k} (\C ^{3})]_{\vir }\, \LL ^{-(n-k)/2},
\]
where 
\[
\tilde{c}_{n} = \LL ^{-\binom{n}{2}}\frac{C_{n}}{\Lfact{n}}
\]
is the renormalized motive~\eqref{eq:tildecn} of
the space~$C_n$ of commuting pairs of matrices. Multiplying by $t^{n}$
and summing, we get
\[
C(t\LL ^{1/2}) = Z_{\C ^{3}}(t) C(t\LL ^{-1/2}),
\]
with $C(t)$ as in~\eqref{eq_Cseries}. Thus using
Proposition~\ref{prop: Feit-Fine} (cf. Remark~\ref{rem: coeffs of the
feit-fine formula are rational fncs in L}) we obtain
\begin{align}
Z_{\C ^{3}}(t)&= \frac{C (t\LL ^{1/2})}{C (t\LL ^{-1/2})}\label{eq:twisted}\\
&=\prod _{m=1}^{\infty }\prod _{j=0}^{\infty }\frac{\left(1-\LL
^{1-j+m/2
}t^{m} \right)^{-1}}{\left(1-\LL ^{1-j-m/2 }t^{m} \right)^{-1}}\nonumber \\
&=\prod _{m=1}^{\infty }\prod _{j=0}^{m-1} \left(1-\LL ^{1-j+m/2}t^{m}
\right)^{-1}\nonumber\\
&=\prod _{m=1}^{\infty }\prod _{k=0}^{m-1} \left(1-\LL ^{2+k-m/2}t^{m}
\right)^{-1}\nonumber
\end{align}
which completes the proof of Theorem~\ref{thm_C3}.  \endproof

\begin{remark} Some formulae in the above proof appear
also in recent work of
Reineke~\cite{Reineke} and Kontsevich--Soibelman~\cite{KS_new}. In
particular, the twisted quotient \eqref{eq:twisted} appears
in~\cite[Prop.3.3]{Reineke}. The twisted quotient is applied later
in~\cite[Section 4]{Reineke} to a generating series of stacky
quotients, analogously to our series~$C$ defined
in~\eqref{eq:tildecn}. Reineke's setup is more general, dealing with
arbitrary quivers, but also more special, since there are no
relations. 
\end{remark}

\begin{remark} The result of Theorem~\ref{thm_C3} shows 
in particular that the absolute virtual motives of $\Hilb^n(\C^3)$ are 
independent of the chosen linear \CY structure on~$\C^3$. 
\end{remark} 

\begin{remark}
The first non-trivial example is the case of four points, with $\Hilb
^{4} (\C ^{3})$ irreducible and reduced but singular. The E-polynomial
realization of the virtual motive on $\Hilb ^{4} (\C ^{3})$ was
computed earlier by~\cite{Dimca-Szendroi}. The result, up to the
different normalization used there, coincides with the $t=4$ term of
the result above.
\end{remark}

\begin{remark} 
The Euler characteristic specialization of our formula is obtained by setting $\LL ^{\half }=-1$. This immediately leads to 
\[
\chi Z_{\C ^{3}} (t) = \prod _{m=1}^{\infty } (1- (-t)^{m})^{-m} = M
(-t),
\]
where $M (t)$ is the MacMahon function enumerating 3D partitions. 
The standard proof of 
\[ \chi Z_{\C^3}(t)= M(-t)\]
is by torus localization~\cite{MNOP1, Behrend-Fantechi08}. Our
argument gives a new proof of this result, which is independent of the
combinatorics of 3-dimensional partitions. Indeed, by combining the
two arguments, we obtain a new (albeit non-elementary) proof of
MacMahon's formula. It is of course conceivable that
Theorem~\ref{thm_C3} also has a proof by torus localization (perhaps
after the E-polynomial specialization). But as the computations
of~\cite{Dimca-Szendroi} show, this has to be nontrivial, since the
fixed point contributions are not pure weight.
\end{remark}

\section{The Hilbert scheme of points of a general threefold}
\label{sec: the Hilb scheme of a general threefold}

\subsection{The virtual motive of the Hilbert scheme}
\label{subsec: virtual motives of the Hilb scheme of a general threefold} Let $X$ be a
smooth and quasi-projective threefold. Recall the stratification
of $\Hilb^n(X)$ by strata $\Hilb^n_\alpha(X)$ indexed by partitions $\alpha$
of $n$. Proposition~\ref{prop_C3properties} dictates the following recipe for 
associating a virtual motive to the Hilbert scheme and its strata. 

\begin{definition}\label{defn: virtual motive of Hilb (X))}
We define virtual motives 
\[[\Hilb^n_\alpha(X)]_\vir\in\M_\C \mbox{ \ and \ } [\Hilb^n(X)]_\vir\in\M_\C\]
as follows. 
\begin{enumerate}
\item On the deepest stratum,
\[ [\Hilb_{(n)}^n(X)]_\vir = [X]\cdot [\Hilb^n(\C^3)_0]_\vir,
\label{genprop2}\]
where $[\Hilb^n(\C^3)_0]_\vir$ is as defined in~\S\ref{subsec: virtual motives for C3}.
\item More generally, on all strata, 
\[ [\Hilb^n_\alpha(X)]_\vir = \pi_{G_\alpha}\left(\left[\prod_iX^{\alpha_i}\setminus \Delta\right]\cdot \prod_i\left[\Hilb^{i}(\C^3)_0^{\alpha_i}\right]_\vir\right),
\label{genprop3}\]
where the motivic classes
$\left[\prod_iX^{\alpha_i}\setminus \Delta\right]$ and
$\prod_i\left[\Hilb^{i}(\C^d)_0)\right]^{\alpha_i}_\vir$ carry 
$G_\alpha$-actions, and $\pi_{G_\alpha}$ denotes the 
quotient map~\eqref{eq: quotient map on rings}. 
\item Finally
\[ [\Hilb^n(X)]_\vir =\sum_\alpha[\Hilb^n_\alpha(X)]_\vir.
\]
\end{enumerate}
\end{definition}

Of course by Proposition~\ref{prop_C3properties}, this definition
reconstructs the virtual motives of $\Hilb^n(\C^3)$ from those of the 
punctual Hilbert scheme $\Hilb^n(\C^3)_0$ consistently with its original definition.

\subsection{The partition function of the Hilbert scheme}\label{subsec: partition function of Hilb scheme}
Let 
\[ 
Z_X(t) = \sum_{n=0}^{\infty } [\Hilb^n(X)]_\vir t^n\in\M_\C[[t]]
\]
be the motivic degree zero \DT partition function of a smooth
quasi-projective threefold~$X$.  
We will derive expressions for this series and its
specializations from Theorem~\ref{thm_C3}.  We use the $\Exp$ map and
power structure on the \motivicring introduced in \S\ref{subsec: power
str} throughout this section.

Let 
\[
Z_{\C ^{3},0} (t) = \sum _{n=0}^{\infty }\,[\Hilb ^{n} (\C
^{3})_{0}]_{\vir }\, t^{n}
\]
be the generating series of virtual motives of the punctual Hilbert schemes of 
$\C^3$ at the origin. The following statement is the virtual
motivic analogue of Cheah's \cite[Main Theorem]{Cheah}.
 
\begin{proposition} We have
\[
Z_{X} (t) = Z_{\C ^{3},0} (t)^{[X]}.
\]
\label{proposition: universal relation}
\end{proposition}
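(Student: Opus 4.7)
The plan is to simply match the power-structure expansion of $Z_{\C^3,0}(t)^{[X]}$ term-by-term with the definition of $[\Hilb^n(X)]_\vir$. No new motivic calculation is needed: the virtual motives have been set up in Definition~\ref{defn: virtual motive of Hilb (X))} precisely so that this combinatorial identification goes through.

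First, I would set $A(t) = Z_{\C^3,0}(t) = 1 + \sum_{i\geq 1} A_i t^i$ with $A_i = [\Hilb^i(\C^3)_0]_\vir \in \M_\C$ (noting $A_0 = 1$ since $\Hilb^0$ is a point). Applying the defining formula~\eqref{def power structure} for the power structure gives
\[
Z_{\C^3,0}(t)^{[X]} = 1 + \sum_{\alpha} \pi_{G_\alpha}\!\left[\left(\prod_i X^{\alpha_i} \setminus \Delta\right) \cdot \prod_i A_i^{\alpha_i}\right] t^{|\alpha|},
\]
where the sum runs over all partitions $\alpha$ with $\alpha_i$ parts of size $i$ and $G_\alpha = \prod_i S_{\alpha_i}$. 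Grouping by $|\alpha| = n$, the coefficient of $t^n$ is a sum over $\alpha \vdash n$ of expressions which, by Definition~\ref{defn: virtual motive of Hilb (X))}(2), are exactly $[\Hilb^n_\alpha(X)]_\vir$. Summing over $\alpha \vdash n$ and invoking Definition~\ref{defn: virtual motive of Hilb (X))}(3) recovers $[\Hilb^n(X)]_\vir$, which is the coefficient of $t^n$ in $Z_X(t)$.

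The one subtlety I would have to address carefully is the compatibility of the two $G_\alpha$-equivariant structures being compared. In the power-structure formula, the class $\prod_i A_i^{\alpha_i}$ is made $G_\alpha$-equivariant via Lemma~\ref{lemma: symm grp acts} by permuting tensor factors according to the labelling scheme in the proof of that lemma. In Definition~\ref{defn: virtual motive of Hilb (X))}(2), the factor $\prod_i [\Hilb^{i}(\C^3)_0^{\alpha_i}]_\vir$ is made $G_\alpha$-equivariant by permuting the $\alpha_i$ copies of $\Hilb^i(\C^3)_0$ for each $i$. Since $[\Hilb^i(\C^3)_0]_\vir$ is itself a class in $\M_\C$ (by Proposition~\ref{prop_C3properties}(1)), the two permutation actions on $A_i^{\alpha_i}$ agree, and likewise the action on $\prod_i X^{\alpha_i}\setminus \Delta$ is the standard permutation action in both settings.

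The main obstacle, therefore, is not any hard calculation but the bookkeeping verifying that the symmetric group actions, and hence the quotient maps $\pi_{G_\alpha}$ of~\eqref{eq: quotient map on rings}, produce identical classes in $\M_\C$ on both sides of the claimed equality. Once this compatibility is established, the proposition follows immediately by comparing coefficients of $t^n$ on both sides, and no appeal to Theorem~\ref{thm_C3} or to the explicit formula for $Z_{\C^3,0}(t)$ is required — the statement is purely structural, reflecting the stratification of $\Hilb^n(X)$ together with the Zariski-local triviality of each $\Hilb^n_\alpha(X) \to \Sym^n_\alpha(X)$.
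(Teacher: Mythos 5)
Your proposal is essentially identical to the paper's own proof: both expand one side via the power-structure formula~\eqref{def power structure} and identify the coefficient of $t^n$ term-by-term with $\sum_{\alpha\vdash n}[\Hilb^n_\alpha(X)]_\vir$ using parts (2) and (3) of Definition~\ref{defn: virtual motive of Hilb (X))}. The paper merely runs the chain of equalities from $Z_X(t)$ to $Z_{\C^3,0}(t)^{[X]}$ rather than the reverse, and leaves the compatibility of the $G_\alpha$-actions implicit, a point you correctly flag as the only real bookkeeping to check.
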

\begin{proof} 
We have
\begin{eqnarray*} Z_X(t) & = & 1 + \sum_{\alpha} [\Hilb^n_\alpha(X)]_\vir t^{|\alpha|}\\
& = &  1 + \sum_{\alpha}\pi_{G_\alpha}\left(\left[\prod_iX^{\alpha_i}\setminus \Delta\right]\cdot \prod_i\left[\Hilb^{i}(\C^d)_0)\right]_\vir^{\alpha_i}\right) t^{|\alpha|}\\
& = & \left(1+\sum_{n\geq 1}[\Hilb^n(\C^d)_0]_\vir t^n\right)^{[X]}.
\end{eqnarray*}
Here, first we use Definition~\ref{defn: virtual motive of Hilb (X))}(3), 
then Definition~\ref{defn: virtual motive of Hilb (X))}(2), 
and finally the power structure formula~\eqref{def power structure}.
\end{proof}

\begin{theorem}\label{thm_general_formula}
Let $X$ be a smooth and quasi-projective threefold. Then
\begin{equation}\label{eq_general_formula}
Z_X(-t)= \Exp\left(\frac{-t[X]_{\vir } }{(1+\LL ^{\half }t) (1+\LL ^{-\half }t)}\right).
\end{equation}
\end{theorem}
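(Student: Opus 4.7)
The plan is to reduce the theorem to a statement about $\C^3$ via Proposition~\ref{proposition: universal relation}, and then to verify the resulting identity by expanding Theorem~\ref{thm_C3} directly as a motivic exponential. First I would apply the universal relation $Z_X(t) = Z_{\C^3,0}(t)^{[X]}$ with $t\mapsto -t$ (a valid substitution since the power structure is defined coefficientwise by~\eqref{def power structure}), combined with the standard identity $\Exp(F)^{A} = \Exp(AF)$ for $A\in\MC$ (which follows from $(B^m)^n=B^{mn}$ applied to $B=(1-t^n)^{-1}$) and with $[X]_\vir = \LL^{-3/2}[X]$, to see that the theorem is equivalent to the punctual-Hilbert identity
\[
Z_{\C^3,0}(-t) \;=\; \Exp\!\left(\frac{-\LL^{-3/2}\,t}{(1+\LL^{\half}t)(1+\LL^{-\half}t)}\right).
\]

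Specialising the universal relation to $X = \C^3$ yields $Z_{\C^3}(-t) = Z_{\C^3,0}(-t)^{\LL^3}$. Since $\Exp\colon t\MC[[t]] \to 1+t\MC[[t]]$ is a bijection and $\LL$ is invertible in $\MC$, writing $Z_{\C^3,0}(-t) = \Exp(G(t))$ determines $G$ uniquely from $Z_{\C^3}(-t) = \Exp(\LL^3 G(t))$ by dividing the exponent by~$\LL^3$. Thus the theorem follows once the identity
\[
Z_{\C^3}(-t) \;=\; \Exp\!\left(\frac{-\LL^{3/2}\,t}{(1+\LL^{\half}t)(1+\LL^{-\half}t)}\right)
\]
is established.

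To verify this last identity, I would substitute $t\mapsto -t$ in Theorem~\ref{thm_C3} to obtain
\[
Z_{\C^3}(-t) \;=\; \prod_{m\geq 1}\prod_{k=0}^{m-1}\bigl(1 - (-1)^m\,\LL^{k+2-m/2}\,t^{m}\bigr)^{-1},
\]
and expand the target exponent as a geometric double sum, giving coefficient $B_m = (-1)^m\sum_{k=0}^{m-1}\LL^{k+2-m/2}$ at $t^m$, so the target equals $\prod_m (1-t^m)^{-B_m}$. The match is then factor by factor: for even $m$ the exponent is an integer power of $\LL$ and one has $(1-t^m)^{-\LL^p} = (1-\LL^p t^m)^{-1}$ directly from the base case of the power structure; for odd $m$ the exponent is $-\LL^p$ with half-integer $p=k+2-m/2$, and the extension rule $(1-t^m)^{-(-\LL^{\half})^n[X]} = (1-(-\LL^{\half})^n t^m)^{-[X]}$ applied with $n=2p$ (odd) and $[X]=1$ gives $(1-t^m)^{\LL^p} = (1+\LL^p t^m)^{-1} = \bigl(1 - (-1)^m \LL^p t^m\bigr)^{-1}$, exactly as required. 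The main point to watch is this extension-rule computation with half-integer exponents, which automatically delivers the correct signs; otherwise the identification is a routine manipulation of formal power series.
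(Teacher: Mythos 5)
Your proposal is correct and follows essentially the same route as the paper's proof: the universal relation $Z_X(t) = Z_{\C^3,0}(t)^{[X]}$ together with $\Exp(F)^A = \Exp(AF)$ reduces everything to rewriting the product formula of Theorem~\ref{thm_C3} as a motivic exponential, which is exactly what the paper does. The only difference is cosmetic: the paper works forward ($Z_{\C^3}\Rightarrow Z_{\C^3,0}\Rightarrow Z_X$), while you argue in the reverse direction, reducing the general statement first to the punctual identity and then to the $\C^3$ identity via the bijectivity of $\Exp$ and the invertibility of $\LL^3$; both directions rest on the same three facts and the same final computation, including the sign bookkeeping via $(-\LL^{\half})^n$ in the half-integer case.
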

\begin{proof} 
We begin by writing the formula from Theorem~\ref{thm_C3} using the
power structure on $\M _{\C } $ and the $\Exp $ function defined in
\S\ref{subsec: power str}.
\begin{align*}
Z_{\C ^{3}} (-t)&=\prod _{m=1}^{\infty }\prod _{k=0}^{m-1} \left(1-\LL ^{2+k-\frac{m}{2}} (-t)^{m} \right)^{-1}\\
&=\prod _{m=1}^{\infty }\prod _{k=0}^{m-1} \left(1-\left(-\LL ^{\half } \right) ^{4+2k-m} t^{m} \right)^{-1}\\
&=\prod _{m=1}^{\infty }\left(1-t^{m} \right)^{-\sum _{k=0}^{m-1}\left(-\LL ^{\half } \right) ^{4+2k-m }}\\
&=\Exp \left(\sum _{m=1}^{\infty }t^{m}\sum _{k=0}^{m-1}\left(-\LL ^{\half } \right)^{4+2k-m} \right)\\
&=\Exp \left(\sum _{m=1}^{\infty }t^{m}\left(-\LL ^{\half } \right)^{4-m} \cdot\frac{1-\left(-\LL ^{\half } \right) ^{2m}}{1-\LL } \right)\\
&=\Exp \left( \frac{\LL ^{2}}{1-\LL }\sum _{m=1}^{\infty } \left(-\LL ^{-\frac{1}{2}}t \right)^{m}-\left(-\LL ^{\frac{1}{2}}t \right)^{m}\right)\\
&=\Exp \left( \frac{\LL ^{\frac{3}{2}}}{\LL ^{-\frac{1}{2}}-\LL ^{\frac{1}{2}}}\cdot \left(\frac{-\LL ^{-\frac{1}{2}}t}{1+\LL ^{-\frac{1}{2}}t}-
\frac{-\LL ^{\frac{1}{2}}t}{1+\LL ^{\frac{1}{2}}t} \right)\right)\\
&=\Exp \left(\frac{-\LL ^{\frac{3}{2}}t}{\left(1+\LL ^{-\frac{1}{2}}t \right)\left(1+\LL ^{\frac{1}{2}}t \right)} \right).
\end{align*}
Replacing $t$ by $-t$ in Proposition~\ref{proposition: universal
relation}, and letting $X=\C ^{3}$, we find that
\[
Z_{\C ^{3},0} (-t) = \Exp \left(\frac{-\LL ^{-\frac{3}{2}}t}{\left(1+\LL ^{-\frac{1}{2}}t \right)\left(1+\LL ^{\frac{1}{2}}t \right) } \right).
\]
Another application of Proposition~\ref{proposition: universal relation} 
concludes the proof.
\end{proof}

Our formula fits nicely with the corresponding formulas for surfaces,
curves, and points. G\"ottsche's formula~\cite{Gottsche-formula,
Gottsche-motive} for a smooth quasi-projective surface $S$, rewritten
in motivic exponential form in~\cite[Statement
4]{Gusein-Zade-Luengo-Melle-Hernandez-power}, reads
\begin{equation}  \label{eq_surf}
\sum_{n=0}^\infty [\Hilb^n(S)]\,t^{n} =  \Exp \left(\frac{[S]t}{1-\LL
t}\right)
\end{equation}
and for a smooth curve $C$ we have
\[
\sum _{n=0}^{\infty } [\Hilb ^{n} (C)] t^{n} = \sum _{n=0}^{\infty } [\Sym ^{n} (C)] t^{n} = \Exp \left([C]t \right).
\]
Finally, for completeness, consider $P$, a collection of $N$
points. Then
\[
\sum _{n=0}^{\infty } [\Hilb ^{n} (P)] t^{n} = \sum
_{n=0}^{N}\binom{N}{n}t^{n} = \frac{(1-t^{2})^{N}}{(1-t)^{N}} = \Exp
\left([P]t (1-t) \right).
\]
Since $\Hilb ^{n} (X)$ is smooth and of the expected dimension when
the dimension of $X$ is 0, 1, or 2, the virtual motives are given 
by~\eqref{eqn: vir motive for smooth X}: 
\[
[\Hilb ^{n} (X)]_{\vir } = \LL ^{-\frac{n\dim X}{2}}[\Hilb ^{n} (X)].
\]
The series
\[
Z_{X} (t) = \sum _{n=0}^{\infty } [\Hilb ^{n} (X)]_{\vir }\, t^{n}
\]
is thus well defined for any $X$ of dimension 0, 1, 2, or 3. For $\dim
X \geq 4$, $Z_{X} (t)$ is defined to order $t^{3}$ since $\Hilb ^{n}
(X)$ is smooth for $n\leq 3$ in all dimensions. In order to write
$Z_{X} (t)$ as a motivic exponential, we must introduce a sign.  Let
$T= (-1)^{d}t$ where $d=\dim X$.  Then for $d$ equal to 0, 1, or 2, we
have
\begin{align*}
Z_{X} (T)&= \sum _{n=0}^{\infty } [\Hilb ^{n} (X)]\, \LL
^{-\frac{dn}{2}}T^{n}\\
&=\sum _{n=0}^{\infty } [\Hilb ^{n} (X)] \left(\left(-\LL ^{\half }
\right)^{-d} t\right)^{n}.
\end{align*}
Applying the substitution rule~\eqref{eqn: substitution rule for Exp}
to the above formulas and including the $d=3$ case from
Theorem~\ref{thm_general_formula}, we find
\[
Z_{X} (T) = \Exp \left(T[X]_{\vir }\, G_{d} (T) \right),
\]
where
\[
G_{d}(T) =\begin{cases}
1-T&d=0\\
1&d=1\\
(1-T)^{-1}&d=2\\
\left(1-\LL ^{\half }T \right)^{-1}\left(1-\LL ^{-\half }T \right)^{-1} &d=3
\end{cases}
\]

The above can be written uniformly as
\[
G_{d} (T) =\Exp \left(T[\PP ^{d-2}]_{\vir } \right),
\]
where we have defined $[\PP^{N}]_{\vir}$ for negative $N$ via the
equation
\begin{equation}\label{eqn: defn of neg dim proj space}
[\PP ^{N}]_{\vir} = \LL ^{-\frac{N}{2}}\cdot \frac{\LL ^{N+1}-1}{\LL -1}.
\end{equation}
In particular we have $[\PP ^{-1}]_{\vir }=0$ and $[\PP ^{-2}]_{\vir
}=-1$.

\begin{corollary}\label{cor: pithy formula for Z in all dims}
The motivic partition function of the Hilbert scheme of points on a
smooth variety $X$ of dimension $d$ equal to 0, 1, 2, or 3 is given by\footnote{We thank Lothar G\"ottsche, Ezra Getzler and
Sven Meinhardt for discussions which led us to this formulation.}

\[
\sum _{n=0}^{\infty }[\Hilb ^{n} (X)]_{\vir }\,\, T^n = \Exp
\Big(T\,[X]_{\vir }\, \Exp \left(T\,[\PP ^{d-2}]_{\vir } \right)\Big).
\]
where $T= (-1)^{d}t$.
\end{corollary}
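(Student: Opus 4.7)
The plan is to reduce the corollary to a case-by-case verification, since the preceding discussion has already assembled the individual partition functions $Z_X(T)$ for $d = 0,1,2,3$ into the uniform shape
\[
Z_X(T) = \Exp\!\bigl(T\,[X]_{\vir}\, G_d(T)\bigr),
\]
with $G_d(T)$ being the explicit piecewise expression displayed just above the corollary. What remains is to recognise $G_d(T)$ as a motivic exponential, namely to show
\[
G_d(T) = \Exp\!\bigl(T\,[\PP^{d-2}]_{\vir}\bigr)
\]
for each $d \in \{0,1,2,3\}$. Granting this identity, the corollary follows by substitution.

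First I would unpack $[\PP^{d-2}]_{\vir}$ using the definition \eqref{eqn: defn of neg dim proj space}, obtaining
\[
[\PP^{-2}]_{\vir} = -1,\quad [\PP^{-1}]_{\vir} = 0,\quad [\PP^{0}]_{\vir} = 1,\quad [\PP^{1}]_{\vir} = \LL^{\half}+\LL^{-\half}.
\]
The cases $d=0,1,2$ are then entirely elementary: from the definition of $\Exp$ one has $\Exp(-T) = (1-T)$, $\Exp(0) = 1$, and $\Exp(T) = (1-T)^{-1}$, matching $G_0$, $G_1$, $G_2$ respectively.

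The only non-trivial step is $d = 3$. Here I would invoke the extension of the power structure to exponents in $\MC$ described in \S\ref{subsec: power str}, specifically the rule
\[
(1-t)^{-(-\LL^{\half})^n[X]} = \bigl(1-(-\LL^{\half})^n t\bigr)^{-[X]}.
\]
Taking $[X] = -1$ and $n = \pm 1$ gives $\Exp(\LL^{\pm\half}\,T) = (1-T)^{-\LL^{\pm\half}} = (1+\LL^{\pm\half}T)^{-1}$. Multiplying,
\[
\Exp\!\bigl((\LL^{\half}+\LL^{-\half})T\bigr) = \bigl(1+\LL^{\half}T\bigr)^{-1}\bigl(1+\LL^{-\half}T\bigr)^{-1} = G_3(T),
\]
as required.

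The main technical obstacle is keeping careful track of signs when handling $\Exp$ applied to half-integer powers of $\LL$; this is precisely the point at which the sign convention $\sigma_n(-\LL^{\half}) = (-\LL^{\half})^n$ (Remark~\ref{rem: lambda ring homo}) enters and must be applied consistently. Once this is verified, no further computation is needed: substituting $G_d(T) = \Exp(T[\PP^{d-2}]_{\vir})$ into the earlier uniform expression for $Z_X(T)$ yields the stated formula.
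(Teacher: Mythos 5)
Your overall strategy — show $G_d(T) = \Exp\bigl(T\,[\PP^{d-2}]_{\vir}\bigr)$ case by case and substitute back — is exactly how the corollary follows from the displayed identity $Z_X(T)=\Exp(T\,[X]_{\vir}\,G_d(T))$, and your computations for $d=0,1,2$ are fine. But the $d=3$ case, which is the one that actually matters for this paper, has a genuine gap.

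The first problem is conceptual: you treat $\Exp$ as if its special variable were $T$, writing $\Exp(\LL^{\pm\half}T)=(1-T)^{-\LL^{\pm\half}}$. This is precisely the pitfall flagged in the footnote to the introduction (``one cannot simply substitute $t$ for $T$''): throughout \S\ref{subsec: partition function of Hilb scheme} the operation $\Exp$ is taken with respect to the original variable $t$, and $T=(-1)^d t$ is only a convenient shorthand. For $d=3$ one has $T=-t$, so the correct reading is
\[
\Exp\bigl(\LL^{\half}T\bigr)=\Exp\bigl(-\LL^{\half}t\bigr)=(1-t)^{\LL^{\half}},
\]
\emph{not} $(1-T)^{-\LL^{\half}}$. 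Since $t\mapsto -t$ is \emph{not} of the form $t\mapsto(-\LL^{\half})^n t$, the substitution rule~\eqref{eqn: substitution rule for Exp} does not let you replace $t$ by $T$ inside $\Exp$ for odd $d$; conflating $\Exp_t$ with $\Exp_T$ would in fact make the statement false. (That the $d=1$ case nevertheless ``works'' in your write-up is an accident of $[\PP^{-1}]_{\vir}=0$.)

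The second problem is a computational slip compounding the first. Even granting $(1-T)^{-\LL^{\half}}$ as the object to be evaluated, the power-structure rule applied with $n=1$, $[X]=-1$ gives
\[
(1-T)^{-\LL^{\half}}
=(1-T)^{-(-\LL^{\half})(-1)}
=\bigl(1-(-\LL^{\half})T\bigr)^{-(-1)}
=1+\LL^{\half}T,
\]
not $(1+\LL^{\half}T)^{-1}$. Your final equality $\bigl(1+\LL^{\half}T\bigr)^{-1}\bigl(1+\LL^{-\half}T\bigr)^{-1}=G_3(T)$ is also false, since $G_3(T)=(1-\LL^{\half}T)^{-1}(1-\LL^{-\half}T)^{-1}$; so the argument is internally inconsistent as well as sign-wrong. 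The correct chain is
\[
\Exp\bigl(\LL^{\half}T\bigr)
=(1-t)^{\LL^{\half}}
=\bigl(1-(-\LL^{\half})t\bigr)^{-1}
=\bigl(1+\LL^{\half}t\bigr)^{-1}
=\bigl(1-\LL^{\half}T\bigr)^{-1},
\]
where the middle step uses the power-structure rule with $n=1$, $[X]=\mathrm{pt}$ applied to the exponent $\LL^{\half}=-(-\LL^{\half})\cdot 1$. Running the same computation with $\LL^{-\half}$ and multiplying does give $G_3(T)$. So the corollary is correct, but your proof of the $d=3$ identity needs to be redone with the $\Exp_t$/$T=-t$ distinction kept straight; the sign convention you invoke (Remark~\ref{rem: lambda ring homo}) is indeed exactly what must be tracked, but you have not applied it correctly.
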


\begin{remark}\label{rem: the one formula is valid for n<4} 
We do not know of a reasonable general definition for the virtual
motive $[\Hilb ^{n} (X)]_{\vir }$ when the dimension of $X$ is greater
than 3. However, it is well known that the Hilbert scheme $\Hilb^n(X)$
of $n\leq 3$ points is smooth in all dimensions and so the virtual
motive is given by $\LL ^{-\frac{nd}{2}}[\Hilb ^{n} (X)]$ in these
cases (cf.~\eqref{eqn: vir motive for smooth X}). Remarkably, the
formula in Corollary~\ref{cor: pithy formula for Z in all dims}
correctly computes the virtual motive for $n\leq 3$ in all
dimensions. This can be verified directly using the motivic class of
the punctual Hilbert scheme for $n\leq 3$ \cite[\S4]{Cheah}:
\[
\sum _{n=0}^{3} [\Hilb ^{n} (\C ^{d})_{0}]\, t^{n} = 1+t+\Lbinom{d}{1} t^{2}+\Lbinom{d+1}{2}t^{3}.
\]
\end{remark}
\begin{remark}\label{rem: Euler char specialization of the one formula is MacMahon's guess for dim d partitions}
Using the torus action on $\Hilb ^{n} (\C ^{d})$, it is easy to see
that $\chi \left(\Hilb ^{n} (\C ^{d}) \right)$ counts subschemes given
by monomial ideals. Equivalently, $\chi \left(\Hilb ^{n} (\C ^{d})
\right)$ is equal to the number of dimension $d$ partitions of
$n$. Thus naively, one expects $\chi Z_{\C ^{d}} (T)$ to be the
generating function for $d$ dimensional partitions of $n$, counted with
the sign $(-1)^{nd}$. Indeed, this is the case when $d\leq 3$ or when
$n\leq 3$. Up to the sign $(-1)^{nd}$, the Euler characteristic
specialization of our general formula yields exactly MacMahon's guess
for the generating function of dimension $d$ partitions:
\begin{align*}
\chi Z_{\C ^{d}} (T) &=\Exp \left((-1)^{d}t\,\chi [\C ^{d}]_{\vir }\, \Exp \left((-1)^{d}t\,\chi [\PP ^{d-2}]_{\vir } \right) \right)\\
&= \Exp (t\Exp ((d-1)t))\\
&=\Exp \left(\frac{t}{(1-t)^{d-1}} \right)\\
&=\prod _{m=1}^{\infty } \left(1-t^{m} \right)^{-\binom{m+d-3}{d-2}}
\end{align*}
\end{remark}
However, it is now known that MacMahon's guess is not correct,
although it does appear to be asymptotically correct in dimension
four~\cite{Mustonen-Rajesh}.

\subsection{Weight polynomial and deformed MacMahon}
When the dimension of $X$ is 1 or 2, the weight polynomial
specialization of $Z_{X} (t)$ gives rise to MacDonald's and
G\"ottsche's formulas for the Poincar\'e polynomials of the Hilbert
schemes. When the dimension of $X$ is 3, the weight polynomial
specialization leads to the following analogous formula, involving the
refined MacMahon functions discussed in Appendix~\ref{appendix: q-def
of MacMahon}.

\begin{theorem}\label{thm: formula for virtual weight polys} 
Let $X$ be a smooth projective threefold and let $b_{d}$ be the Betti
number of $X$ of degree $d$. Then the generating function of the
virtual weight polynomials of the Hilbert schemes of points of~$X$ is
given by
\begin{equation}\label{eq_prod_Mac}
WZ_X(t) = \prod_{d=0}^{6} M_{\frac{d-3}{2}}\left(-t,-q^{\frac{1}{2}} \right)^{(-1)^{d} b_d}\in\Z[q^{\pm\half}][[t]],
\end{equation} 
where 
\[
M_{\delta}(t,q^{\frac{1}{2}}) = \prod _{m=1}^{\infty }\prod
_{k=0}^{m-1}\left(1-q^{\delta +\frac{1}{2}+k-\frac{m}{2}}\,t^{m}
\right)^{-1}
\]
are the refined MacMahon functions discussed in
Appendix~\ref{appendix: q-def of MacMahon}.
\end{theorem}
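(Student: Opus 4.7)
The plan is to apply the weight polynomial homomorphism $W\colon\MC\to\Z[q^{\pm\frac{1}{2}}]$ to the power-structure identity $Z_X(t)=Z_{\C^3,0}(t)^{[X]}$ of Proposition~\ref{proposition: universal relation}, and then to distribute the resulting exponent according to the Hodge-theoretic decomposition of~$W[X]$. Since $W$ is a ring homomorphism compatible with the pre-$\lambda$ structure on~$\MC$, it respects the power structure, yielding
\[
WZ_X(t)=[WZ_{\C^3,0}(t)]^{W[X]}.
\]
For a smooth projective threefold~$X$, purity of the Hodge structure on $H^\ast(X)$ gives $W([X];q^{\frac{1}{2}})=\sum_{d=0}^6 b_d q^{d/2}$, the classical Poincar\'e polynomial evaluated at~$q^{\frac{1}{2}}$.

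First I would extract $WZ_{\C^3,0}(t)$ explicitly. Applying $W$ to Theorem~\ref{thm_C3} gives $WZ_{\C^3}(t)=M_{\frac{3}{2}}(t,q^{\frac{1}{2}})$. Proposition~\ref{proposition: universal relation} applied to $X=\C^3$ gives the identity $Z_{\C^3}(t)=Z_{\C^3,0}(t)^{\LL^3}$, hence $WZ_{\C^3}(t)=[WZ_{\C^3,0}(t)]^{q^3}$. Since $q^3=W(\LL^3)$ has integer exponent, the substitution rule $(1-Ct^m)^{-q^3}=(1-q^3Ct^m)^{-1}$ of the power structure applies factor by factor, and inverting the relation identifies $WZ_{\C^3,0}(t)=M_{-\frac{3}{2}}(t,q^{\frac{1}{2}})$.

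The heart of the proof is the computation of $M_{-\frac{3}{2}}(t,q^{\frac{1}{2}})^{q^{d/2}}$ for each~$d$, together with multiplicativity of the power structure in the exponent:
\[
WZ_X(t)=\prod_{d=0}^{6}\bigl[M_{-\frac{3}{2}}(t,q^{\frac{1}{2}})^{q^{d/2}}\bigr]^{b_d}.
\]
The key identity is the general substitution rule
\[
(1-Ct^m)^{-q^{d/2}}=\bigl(1-(-1)^d q^{d/2}Ct^m\bigr)^{-(-1)^d},
\]
derived from the extension rule $(1-t)^{-(-\LL^{\frac{1}{2}})^n[Y]}=(1-(-\LL^{\frac{1}{2}})^n t)^{-[Y]}$ by writing $q^{d/2}=(-1)^d(-q^{\frac{1}{2}})^d$ and invoking the pre-$\lambda$ convention $\sigma_n(-\LL^{\frac{1}{2}})=(-\LL^{\frac{1}{2}})^n$ of Remark~\ref{rem: lambda ring homo}. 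Applied factorwise to $M_{-\frac{3}{2}}(t,q^{\frac{1}{2}})$, this gives
\[
M_{-\frac{3}{2}}(t,q^{\frac{1}{2}})^{q^{d/2}}=\prod_{m,k}\bigl(1-(-1)^d q^{(d-2)/2+k-m/2}t^m\bigr)^{-(-1)^d}.
\]

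Finally, expanding the definition of $M_{\frac{d-3}{2}}(-t,-q^{\frac{1}{2}})$ and tracking the signs gives $M_{\frac{d-3}{2}}(-t,-q^{\frac{1}{2}})=\prod_{m,k}(1-(-1)^d q^{(d-2)/2+k-m/2}t^m)^{-1}$, with the outer sign~$(-1)^d$ arising as $(-1)^{2\delta+1}$ at $\delta=\frac{d-3}{2}$. Hence the displayed product is $M_{\frac{d-3}{2}}(-t,-q^{\frac{1}{2}})^{(-1)^d}$, and multiplying over~$d$ yields the claimed formula. The principal technical obstacle is the half-integer exponent case (odd~$d$): here $(1-t)^{-q^{d/2}}=1+q^{d/2}t$ is a finite polynomial rather than an infinite series, and it is the compensating $(-1)^d$ factors in both base and exponent of the substitution rule that reconcile this anomalous behavior with the uniform $M_\delta$ formula on the right-hand side of the theorem.
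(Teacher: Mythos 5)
Your proof is correct and follows the same strategy as the paper: apply the weight polynomial homomorphism to a power-structure identity for $Z_X$, use compatibility of $W$ with power structures, decompose $W([X])=\sum_d b_d q^{d/2}$ by cohomological degree, and match each factor with a refined MacMahon function. You are, if anything, a bit more careful than the published proof: the paper's displayed line $Z_X(t)=Z_{\C^3}(t)^{[X]}$ and the ensuing exponent $(-\LL^{1/2})^{2k+2-m}$ are each off (the correct base is $Z_{\C^3,0}(t)$, i.e.\ the exponent should be $2k-2-m$), whereas your route through $WZ_{\C^3,0}(t)=M_{-3/2}(t,q^{1/2})$ and the explicit rule $(1-Ct^m)^{-q^{d/2}}=(1-(-1)^dq^{d/2}Ct^m)^{-(-1)^d}$ lands cleanly on the stated product.
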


\proof Recall that the weight polynomial specialization 
\[
W:\M _{\C
}\to \Z\left[q^{\pm \frac{1}{2}} \right]
\]
is obtained from the $E$
polynomial specialization
\[
E:\M _{\C } \to  \Z \left[x,y,(xy)^{-\half } \right]
\]
by setting $x=y=-q^{\frac{1}{2}}$ and $(xy)^{\half }=q^{\half }$.  It
follows from \cite[Prop~4]{Gusein-Zade-Luengo-Melle-Hernandez-Hilb}
that the $W$-specialization is a ring homomorphism which
respects power structures where the power structure on $\Z
\left[q^{\pm \half }\right]$ satisfies
\[
(1-t^{m})^{-\sum _{i}a_{i}\left(-q^{\half } \right)^{i}} = \prod _{i}
\left(1- \left(-q^{\half } \right)^{i}t^{m} \right)^{-a_{i}}.
\]

From Theorems~\ref{thm_general_formula}  we deduce that 
\[
Z_{X} (t) = Z_{\C ^{3}} (t)^{\LL ^{\frac{3}{2}}[X]_{\vir }} = Z_{\C ^{3}} (t)^{[X]}.
\]
It then follows from \ref{thm_C3} that
\[
Z_{X} (t) = \prod _{m=1}^{\infty }\prod _{k=0}^{m-1}\left(1-\left(-\LL ^{\half } \right)^{2k+2-m} (-t)^{m} \right)^{-[X]}.
\]
Applying the homomorphism $W$ to $Z_{X}$, using the compatibility
of the power structures, we get
\begin{align*}
WZ_{X} (t) &= \prod _{m=1}^{\infty }\prod _{k=0}^{m-1}\left(1-\left(-q ^{\half } \right)^{2k+2-m} (-t)^{m} \right)^{-\sum _{d=0}^{6} (-1)^{d} b_{d}\left(-q^{\half } \right)^{d}}\\
&=\prod _{d=0}^{6}\prod _{m=1}^{\infty }\prod _{k=0}^{m-1}\left(1-\left(-q ^{\half } \right)^{2k+2-m+d} (-t)^{m} \right)^{-(-1)^{d} b_{d}}\\
&=\prod _{d=0}^{6} M_{\frac{d-3}{2}} \left(-t,-q^{\half } \right)^{(-1)^{d}b_{d}}.
\end{align*}
\endproof

\begin{remark}\label{rem:CY_or_not_CY}
The Euler characteristic specialization is easily determined from the
formula in Theorem~\ref{thm: formula for virtual weight polys} by setting $-q^{\half }=1$, namely
\begin{equation} \label{eq_Eu_Hilb}
\chi Z_X(t) = M(-t)^{\chi(X)}.
\end{equation}
By Proposition~\ref{prop_spec_to_kai}, this is the partition function
of the ordinary degree zero \DT invariants in the case when $X$ is
Calabi--Yau. Formula~\eqref{eq_Eu_Hilb} is a result, for any smooth
quasi-projective threefold, of Behrend and
Fantechi~\cite{Behrend-Fantechi08}.

Note that a variant of this formula, for a smooth projective threefold
$X$, was originally conjectured by Maulik, Nekrasov, Okounkov and
Pandharipande~\cite{MNOP1}. This involves the integral (degree) of the
degree zero virtual cycle on the Hilbert scheme, and says that for a
projective threefold~$X$,
\begin{equation}\label{eq_MNOP}
\sum_{n=0}^\infty \deg[\Hilb^n(X)]^\vir t^n = M(-t)^{\int_X c_3-c_1c_2},
\end{equation}
with $c_i$ being the Chern classes of $X$.  This was proved by
\cite{Levine-Pandharipande, Li-zeroDT}. The two formulae become
identical in the projective \CY case, since then the perfect
obstruction theory on the Hilbert schemes is
symmetric~\cite{Behrend-Fantechi08}, and so by the main result
of~\cite{Behrend-micro}, the degree of the virtual zero-cycle is equal
to its virtual Euler characteristic. Note that our work has nothing to
say about formula~\eqref{eq_MNOP} in the non-\CY case.
\end{remark}

\subsection{Categorified \DT invariants}\label{subsec: categorified DT
invs} Our definition of the virtual motive of the Hilbert scheme
$\Hilb^n(X)$ of a smooth quasi-projective threefold $X$, obtained by
building it up from pieces on strata, is certainly not ideal. Our
original aim in this project was in fact to build a categorification
of \DT theory on the Hilbert scheme of a (Calabi--Yau) threefold~$X$,
defining an object of some category with a cohomological functor to
(multi)graded vector spaces, whose Euler characteristic gives the
degree zero \DT invariant of~$X$. Finding a ring with an Euler
characteristic homomorphism is only a further shadow of such a
categorification.

One particular candidate where such a categorification could live
would be the category $\MHM(\Hilb^n(X))$ of mixed Hodge
modules~\cite{Saito-MHP, Saito-MHM} on the Hilbert scheme. This
certainly works for affine space~$\C^3$, since the global description
of the Hilbert scheme $\Hilb^n(\C^3)$ as a degeneracy locus gives rise
to a mixed Hodge module of vanishing cycles, with all the right
properties~\cite{Dimca-Szendroi}.  However, when trying to globalize
this construction, we ran into glueing issues which we couldn't
resolve, arising from the fact that the description of $\Hilb^n(\C^3)$
as a degeneracy locus uses a linear \CY structure on~$\C^3$
and is therefore not completely canonical.

In some particular cases, we were able to construct the mixed Hodge
modules categorifying \DT theory of the Hilbert scheme.  Since we
currently have no application for categorification as opposed to a
refined invariant taking values in the \motivicring, and since our
results are partial, we only sketch the constructions.

\begin{itemize}
\item {\em Low number of points.}\ For $n\leq 3$, the Hilbert scheme
$\Hilb^n(X)$ is smooth and there is nothing to do. The next case $n=4$
is already interesting. It is known that for a threefold $X$, the
space $\Hilb^4(X)$ is irreducible and reduced, singular along a copy
of $X$ which is the locus of squares of maximal ideals of points.  As
proved in~\cite{Dimca-Szendroi}, for $X=\C^3$ the mixed Hodge module
of vanishing cycles of $f_4$ on $\Hilb^4(\C^3)$ admits a very natural
geometric description: it has a three-step non-split filtration with
quotients being (shifted copies of) the constant sheaf on the (smooth)
singular locus, the intersection cohomology (IC) sheaf of the whole
irreducible space $\Hilb^4(\C^3)$, and once more the constant sheaf on
the singular locus. It also follows from results of [ibid.]~that the
relevant extension groups are one-dimensional, and so this mixed Hodge
module is unique. Turning to a general (simply-connected)~$X$, we
again have the IC sheaf on the space $\Hilb^4(X)$ and the constant
sheaf on its singular locus, and a compatible extension of these mixed
Hodge modules exists and is unique. This provides the required
categorification.  We expect that such an explicit construction is
possible for some higher values of~$n$ than $4$ but certainly not in
general.
\item {\em Abelian threefolds.}\ Let $X$ be an abelian threefold (or
some other quotient of $\C^3$ by a group of translations). Then we can
cover $X$ by local analytic patches with transition maps which are in
the affine linear group of~$\C^3$. The local (analytic) vanishing
cycle sheaves on the Hilbert schemes of patches can be glued using the
affine linear transition maps to a global (analytic) mixed Hodge
module on $\Hilb^n(X)$.
\item {\em Local toric threefolds.}\ Finally, it should be possible to
construct the gluing directly for some local toric threefolds. We
checked the case of local~$\PP^1$ explicitly, in which case the mixed
Hodge modules on all Hilbert schemes exist. However, we already failed
for local~$\PP^2$.
\end{itemize}

Compare also the discussion surrounding~\cite[Question 5.5]{Joyce-Song},
and see also the recent work~\cite{KS_new}.

\section*{Acknowledgements} We would like to thank D. Abramovich,
T. Bridgeland, P. Brosnan, A. Dimca, B. Fantechi, E. Getzler,
L. G\"ottsche, I. Grojnowski, D. Joyce, T. Hausel, F. Heinloth,
S. Katz, M. Kontsevich, S. Kov\'acs, E. Looijenga, S. Meinhardt,
G. Moore, A. Morrison, J. Nicaise, R. Pandharipande, A. Rechnitzer,
R. Thomas, M. Saito, J. Sch\"urmann, Y. Soibelman and D. van Straten
for interest in our work, comments, conversations and helpful
correspondence. Some of the ideas of the paper were conceived during
our stay at MSRI, Berkeley, during the Jumbo Algebraic Geometry
Program in Spring 2009; we would like to thank for the warm
hospitality and excellent working conditions there. JB thanks the
Miller Institute and the Killiam Trust for support during his
sabbatical stay in Berkeley. BS's research was partially supported by
OTKA grant K61116.

\appendix \section{$q$-deformations of the MacMahon
function}\label{appendix: q-def of MacMahon}

Let $\CP$ denote the set of all finite 3-dimensional partitions. For a
partition $\alpha\in\CP$, let $w(\alpha)$ denote the number of boxes
in~$\alpha$. The combinatorial generating series
\[ M(t) = \sum_{\alpha\in\CP} t^{w(\alpha)}
\]
was determined in closed form by MacMahon~\cite{MacMahon} to be
\[ M(t)= \prod_{m=1}^\infty (1-t^m)^{-m}.
\]
Motivated by work of Okounkov and
Reshetikhin~\cite{Okounkov-Reshe-skew}, in a recent
paper~\cite{Iqbal-Kozcaz-Vafa}, Iqbal--Koz\c{c}az--Vafa discussed a
family of $q$-deformations of this formula. Think of a 3-dimensional
partition~$\alpha\in\CP$ as a subset of the positive octant lattice
$\N^3$, and break the symmetry by choosing one of the coordinate
directions.  Define $w_-(\alpha), w_0(\alpha)$ and $w_+(\alpha)$,
respectively, as the number of boxes (lattice points) in
$\alpha\cap\{x-y<0\}$, $\alpha\cap\{x-y=0\}$ and
$\alpha\cap\{x-y>0\}$. For a half-integer $\delta\in\half\Z$, consider
the generating series
\[ M_\delta(t_1, t_2) = \sum_{\alpha\in\CP} t_1^{w_-(\alpha) + \left(\frac{1}{2}+\delta\right)w_0(\alpha)}t_2^{w_+(\alpha) + \left(\frac{1}{2}-\delta\right)w_0(\alpha)}.
\]
Clearly $M_\delta(t,t)=M(t)$ for all $\delta$. 

\begin{theorem} 
{\rm (Okounkov--Reshetikhin~\cite[Thm. 2]{Okounkov-Reshe-skew})}
The series $M_\delta(t_1, t_2)$ admits the product form
\[
M_\delta(t_1, t_2)=\prod_{i,j=1}^\infty \left(1-t_1^{i-\frac{1}{2}+\delta}t_2^{j-\frac{1}{2}-\delta}\right)^{-1}.
\]  
\end{theorem}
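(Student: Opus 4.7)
The plan is to prove the identity by the transfer-matrix method using half-vertex operators on bosonic Fock space. The first step is combinatorial: a 3D partition $\alpha\in\CP$ is encoded bijectively by its sequence of diagonal slices $(\lambda^{(n)})_{n\in\Z}$, where $\lambda^{(n)}$ is the 2D partition obtained by intersecting $\alpha$ with the plane $i-j=n$. Only finitely many slices are nonempty, and neighbouring slices interlace, with $\lambda^{(n-1)}\preceq \lambda^{(n)}$ for $n\leq 0$ and $\lambda^{(n)}\succeq \lambda^{(n+1)}$ for $n\geq 0$. Under this encoding $w_-(\alpha)=\sum_{n<0}|\lambda^{(n)}|$, $w_0(\alpha)=|\lambda^{(0)}|$, and $w_+(\alpha)=\sum_{n>0}|\lambda^{(n)}|$, so
\[
M_\delta(t_1,t_2)=\sum_{(\lambda^{(n)})}\prod_{n\in\Z}q_n^{|\lambda^{(n)}|},
\]
where $q_n=t_1$ for $n<0$, $q_n=t_2$ for $n>0$, and $q_0=t_1^{1/2+\delta}t_2^{1/2-\delta}$.

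Next I would work on the bosonic Fock space $F=\bigoplus_\lambda \C|\lambda\rangle$ with the half-vertex operators
\[
\Gamma_+(z)|\lambda\rangle=\sum_{\mu\preceq\lambda}z^{|\lambda|-|\mu|}|\mu\rangle,\qquad \Gamma_-(w)|\mu\rangle=\sum_{\lambda\succeq\mu}w^{|\lambda|-|\mu|}|\lambda\rangle,
\]
which satisfy $\Gamma_+(z)|\emptyset\rangle=|\emptyset\rangle$, $\langle\emptyset|\Gamma_-(w)=\langle\emptyset|$, together with the fundamental commutation relation
\[
\Gamma_+(z)\Gamma_-(w)=(1-zw)^{-1}\,\Gamma_-(w)\Gamma_+(z).
\]
Composing these operators in sequence implements a chain of interlacing transitions, with each $\Gamma_\pm$ weighted by its parameter to the power of the size increment. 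Telescoping the cumulative weights along the chain of slices---and splitting the central weight as $q_0=t_1^{1/2+\delta}\cdot t_2^{1/2-\delta}$, attributing the $t_1$-factor to the left side and the $t_2$-factor to the right---yields the identity
\[
M_\delta(t_1,t_2)=\langle\emptyset|\,\cdots\Gamma_+(u_2)\Gamma_+(u_1)\Gamma_-(v_1)\Gamma_-(v_2)\cdots|\emptyset\rangle,
\]
with $v_i=t_1^{i-1/2+\delta}$ and $u_j=t_2^{j-1/2-\delta}$ for $i,j\geq 1$.

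The final step is normal ordering: applying the commutation relation, each $\Gamma_+(u_j)$ moves rightward past each $\Gamma_-(v_i)$ at the cost of a factor $(1-u_jv_i)^{-1}$. After all swaps, the reordered product places every $\Gamma_-$ on the left of every $\Gamma_+$, and these fix the two vacua from their respective sides, so the surviving matrix element collapses to $\langle\emptyset|\emptyset\rangle=1$. This gives
\[
M_\delta(t_1,t_2)=\prod_{i,j\geq 1}\bigl(1-u_jv_i\bigr)^{-1}=\prod_{i,j\geq 1}\bigl(1-t_1^{i-1/2+\delta}t_2^{j-1/2-\delta}\bigr)^{-1},
\]
as claimed. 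The main technical point will be the bookkeeping in the second step, particularly the symmetric splitting of $q_0$, which is precisely what forces the half-integer shifts $i-\frac{1}{2}\pm\delta$ into the exponents of the final product. Convergence of the infinite products is handled at the level of formal power series in $t_1$ and $t_2$, where each coefficient reduces to a finite sum.
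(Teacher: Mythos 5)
The paper does not prove this result; it is cited verbatim from Okounkov--Reshetikhin. Your argument is exactly the transfer-matrix / vertex-operator proof of that reference: diagonal slicing, a telescoping assignment of the weights $t_1^{i-1/2+\delta}$ and $t_2^{j-1/2-\delta}$ to the $\Gamma_-$ and $\Gamma_+$ factors, and normal ordering via $\Gamma_+(z)\Gamma_-(w)=(1-zw)^{-1}\Gamma_-(w)\Gamma_+(z)$. The bookkeeping is consistent (the exponent of $t_1$ telescopes to $w_-+(\tfrac12+\delta)w_0$ and that of $t_2$ to $w_++(\tfrac12-\delta)w_0$), and the product $\prod_{i,j\ge1}(1-u_jv_i)^{-1}$ is the stated formula, so the proof is correct and matches the cited source; only the informal phrase ``attributing the $t_1$-factor to the left side'' is slightly at odds with the operator ordering you actually wrote, where the $\Gamma_-(v_i)$ carrying $t_1$ sit on the right, though it is clear you mean the $w_-$ half of the 3D partition.
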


In the main body of the paper, we use a different set of
variables. Namely, we set
\[
t_{1} = tq^{\half },\quad t_{2} = tq^{-\half }.
\]
Then the product formula becomes
\[
M_\delta(t,q^\half)=\prod_{m=1}^\infty \prod_{k=0}^{m-1} \left(1-t^mq^{k+\frac{1}{2}-\frac{m}{2}+\delta}\right)^{-1}.
\]
The specialization to the MacMahon function is
$M_\delta(t,q^\half=1)=M(t)$ for all~$\delta$.

\section{The motivic nearby fiber of an equivariant
function}\label{appendix: nearby fiber proof}

In this appendix we prove Proposition~\ref{prop: if f is C* equiv and
circle compact, then vanishing cycle is X1-X0}, which asserts that if a
regular function $f:X\to \C $ on a smooth variety is
equivariant with respect to a torus action satisfying certain
assumptions, then Denef-Loeser's motivic nearby fiber $[\psi _{f}]$
is simply equal to the motivic class of the geometric fiber $[f^{-1}
(1)]$. To make this appendix self-contained, we recall the definitions
and restate the result below.

Let 
\[
f\colon X\to\C
\]
be a regular function on a smooth quasi-projective variety~$X$, and
let $X_0=f^{-1}(0)$ be the central fiber.  Denef and Loeser define
$[\psi _{f}]\in \M ^{\muhat }_{\C }$, the motivic nearby cycle
of $f$ using arc spaces and the motivic zeta function
\cite{Denef-Loeser-Igusa,Looijenga-motivic}. Using motivic
integration, they give an explicit formula for $[\psi _{f}]$ in terms
of any embedded resolution which we now recall.

Let $h:Y\to X$ be an embedded resolution of $X_{0}$, namely~$Y$ is
non-singular and
\[
\tilde{f}=h\circ f:Y\to \C
\]
has central fiber $Y_0$ which is a normal crossing divisor with
non-singular components $\{E_{j}\, :\, j\in J\}$. For $I\subset J$,
let
\[
E_{I} = \bigcap _{i\in I} E_{i}
\]
and let
\[
E_{I}^{o} = E_{I} - \bigcup _{j\in I^{c}} \left(E_{j}\cap E_{I} \right).
\]
By convention, $E_{\emptyset }=Y$ and $E^{o}_{\emptyset }=Y-Y_{0}$.

Let $N_{i}$ be the multiplicity of $E_{i}$ in the divisor
$\tilde{f}^{-1} (0)$. Letting \[m_{I} = gcd (N_{i})_{i\in I},\]
there is a natural etale cyclic $\mu_{m_I}$-cover
\[
\tilde{E}^{o}_{I}\to E^{o}_{I}.
\]
The formula of Denef and Loeser for the (absolute) motivic nearby
cycles of~$f$ is given by
\begin{equation}\label{eqn: Denef-Loeser defn of motive of nearby cycles}
[\psi_f]=\sum _{I \neq \emptyset} (1-\LL )^{|I|-1}[\tilde
E^{o}_{I},\mu_{m_{I}}]\quad \in \M _{\C }^{\muhat }
\end{equation}

Since all the $E_{I}^{o}$ appearing in the above sum have natural maps
to $X_{0}$, the above formula determines the \emph{relative} motivic
nearby cycle $[\psi _{f}]_{X_{0}}\in \M ^{\muhat }_{X_{0}}$.  The
relative motivic vanishing cycle is supported on $Z=\{df=0 \}$, the
degeneracy locus of $f$:
\[
[\phi _{f}]_{Z} = [\psi _{f}]_{X_{0}}-[X_{0}]_{X_{0}} \in \M
_{Z}^{\muhat } \subset \M _{X_{0}}^{\muhat }.
\]

Recall that an action of $\C^{*}$ on a variety $V$ is \emph{circle
compact}, if the fixed point set $V^{\C^{*}}$ is compact and moreover,
for all $v\in V$, the limit $\lim_{t\to 0}t \cdot y$ exists.

The following is a restatement of Proposition~\ref{prop: if f is C*
equiv and circle compact, then vanishing cycle is X1-X0} and
Proposition~\ref{prop: relative and effective version of vanishing
cycle = X1 - X0 proposition}.  It is the main result of this appendix.
\begin{theorem}\label{thm: main result of appendix}
Let $f:X\to \C$ be a regular morphism on a smooth quasi-projective
complex variety. Let $Z=\{df=0 \}$ be the degeneracy locus of $f$ and
let $Z_{\aff }\subset X_{\aff }$ be the affinization of $Z$ and $X$
respectively. Assume that there exists an action of a connected
complex torus $T$ on $X$ so that $f$ is $T$-equivariant with respect
to a primitive character $\chi :T\to \C ^{*}$, namely $ f (t\cdot
x)=\chi (t) f (x)$ for all $x\in X$ and $t\in T$. We further assume
that there exists a one parameter subgroup $\C ^{*}\subset T$ such
that the induced action is circle compact.  Then
the motivic nearby cycle class $[\psi_{f}]$ is in $\M
_{\C}\subset \M ^{\muhat }_{\C}$ and is equal to $[X_{1}]=[f^{-1}
(1)]$. Consequently the motivic vanishing cycle class $[\phi _{f}]$ is
given by
\[
[\phi _{f}] = [f^{-1} (1)] - [f^{-1} (0)].
\]
If we further assume that $X_{0}$ is reduced then $[\phi
_{f}]_{Z_{\aff }}$, the motivic vanishing cycle, considered as a
relative class on $Z_{\aff }$, lies in the subring $\M _{Z_{\aff
}}\subset \M _{Z_{\aff }}^{\muhat }$.
\end{theorem}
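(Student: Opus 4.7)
The plan is to deduce both identities directly from Denef--Loeser's explicit formula~\eqref{eqn: Denef-Loeser defn of motive of nearby cycles}, using a $T$-equivariant embedded resolution together with the \BiBi stratification induced by the circle-compact one-parameter subgroup $\C^{*}\subset T$. First I would choose a $T$-equivariant embedded resolution $h\colon Y\to X$ of $X_{0}$, which exists by equivariant resolution of singularities. Each irreducible component $E_{i}$ of $\tilde f^{-1}(0)$, and hence each intersection $E_{I}$ and each open stratum $E_{I}^{o}$, is $T$-invariant since $T$ is connected. Because $h$ is proper and $T$-equivariant, the lifted $\C^{*}$-action on $Y$ is again circle compact: the map $Y^{\C^{*}}\to X^{\C^{*}}$ is proper with compact target, and limits $\lim_{\lambda\to 0}\lambda\cdot y$ exist on $Y$ by properness of $h$. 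The \BiBi stratification of $Y$ then restricts to a stratification of each $E_{I}^{o}$ into $\C^{*}$-equivariantly contracting affine bundles $\pi_{\alpha}\colon(E_{I}^{o})_{\alpha}\to F_{\alpha}\cap E_{I}^{o}$ of some rank $d_{\alpha}$, over components $F_{\alpha}$ of the compact fixed locus $Y^{\C^{*}}$.

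The second step is to trivialize the $\mu_{m_{I}}$-cover on each \BiBi cell. Locally $\tilde E_{I}^{o}\to E_{I}^{o}$ is defined by $z^{m_{I}}=u$ for a unit $u$; since $\tilde f$ is $T$-semi-invariant of character $\chi$ and the $E_{i}$ are $T$-invariant, $u$ is a $T$-eigenfunction, so after passing to a finite isogeny of $T$ (harmless for the class in $\M^{\muhat}_{\C}$) the cover carries a $T$-equivariant structure. Now any $\C^{*}$-equivariant \'etale $\mu_{m_{I}}$-torsor on a $\C^{*}$-equivariantly contracting affine bundle over $F_{\alpha}\cap E_{I}^{o}$ must be pulled back from the base, because $H^{1}_{\text{\'et}}(\AA^{d_{\alpha}},\mu_{m_{I}})=0$ and $\C^{*}$-equivariance forces compatibility of the trivializations with the limit as $\lambda\to 0$. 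Hence $(\tilde E_{I}^{o})_{\alpha}=\pi_{\alpha}^{*}\tilde F_{\alpha}^{I}$, where $\tilde F_{\alpha}^{I}$ denotes the restriction of the cover to $F_{\alpha}\cap E_{I}^{o}$, and the relation~\eqref{eqn: vector bundle with G action is trivial in equivariant K-group}, extended from vector bundles to Zariski-locally-trivial affine bundles by $\muhat$-equivariant Hilbert~90, yields
\[
[(\tilde E_{I}^{o})_{\alpha},\mu_{m_{I}}]=\LL^{d_{\alpha}}\cdot[\tilde F_{\alpha}^{I},\mu_{m_{I}}]\quad\text{in }\M^{\muhat}_{\C}.
\]

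Substituting these identities into~\eqref{eqn: Denef-Loeser defn of motive of nearby cycles} and using that $F_{\alpha}\cap E_{I}^{o}$ is nonempty only for $I=I(\alpha):=\{i:F_{\alpha}\subset E_{i}\}$ expresses $[\psi_{f}]$ as a sum of classes supported on the compact fixed locus $Y^{\C^{*}}$. This sum coincides with the Denef--Loeser formula for $\tilde f$ restricted to a $T$-invariant open neighborhood $U$ of $Y^{\C^{*}}$ on which $\tilde f$ is proper over a disk about $0\in\C$. Proposition~\ref{prop: if f is proper and C* equivariant then vanishing cycle is X1-X0} then applies to $\tilde f|_{U}$ and gives $[\psi_{f}]=[U_{1}]$ with trivial $\muhat$-action; a final \BiBi contraction of $Y_{1}$ onto a neighborhood of $Y^{\C^{*}}$ identifies $[U_{1}]=[Y_{1}]=[X_{1}]$, and $[\phi_{f}]=[X_{1}]-[X_{0}]$ then follows from the definition. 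For the relative statement over $Z_{\aff}$ under the hypothesis that $X_{0}$ is reduced, the strict transform of $X_{0}$ appears in $\tilde f^{-1}(0)$ with multiplicity one, so all $E_{i}$ carrying nontrivial $\mu_{m_{I}}$-covers are purely exceptional and collapse into $Z_{\aff}$ under $Y\to X\to X_{\aff}$; the whole argument runs relative to this affinization and shows that the resulting relative class lies in $\M_{Z_{\aff}}$.

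The main technical obstacle is the trivialization of paragraph~2: verifying that $\C^{*}$-equivariant $\mu_{m_{I}}$-covers of contracting affine bundles are pulled back from the fixed locus compatibly with the $\muhat$-structure, and extending the vector-bundle relation to the affine-bundle setting by $\muhat$-equivariant Hilbert~90. Once this is granted, the combinatorics of the Denef--Loeser formula reduces the problem to the already-proved proper case, and the relative refinement is then a matter of bookkeeping over the affinization.
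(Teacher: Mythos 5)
Your approach starts in the same place as the paper — $T$-equivariant embedded resolution, \BiBi decomposition over the circle-compact $\C^{*}$, localization of the Denef--Loeser formula onto the fixed locus — but the final reduction to Proposition~\ref{prop: if f is proper and C* equivariant then vanishing cycle is X1-X0} does not work. Since $\tilde f$ scales by $\chi$, any $\C^{*}$-invariant Zariski open $U\supset Y^{\C^{*}}$ that meets a point with $\tilde f\neq 0$ contains the whole $\C^{*}$-orbit, so $\tilde f|_{U}$ still maps onto $\C$; ``proper over a disk about $0$'' is not a notion you can invoke inside the Zariski setting needed for the Denef--Loeser formula, and even if one restricts $\tilde f$ to an open set one does not get $[\psi_{\tilde f|_{U}}]=[\psi_{\tilde f}]$ (the formula~\eqref{eqn: Denef-Loeser defn of motive of nearby cycles} changes when you throw away exceptional strata). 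Because of this, the identity $[\psi_f]=[X_1]$ is never actually established. The paper instead proves it \emph{directly}: after the ``no monodromy'' reduction, it rewrites $[\psi_f]=[X_1]$ (via inclusion--exclusion) as the vanishing of $\sum_{A}(-\LL)^{|A|}[E_A]$, BB-decomposes each $E_A$ over the fixed components $F$, and shows that the inner alternating sum over $B\subset I(F)$ is killed because $I_1(F)\neq\emptyset$, which in turn is forced by the positivity of the weight $l$ of $\tilde f$ in local Luna-slice coordinates; no reduction to the proper case is needed.

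There is a second, more subtle gap: your paragraph~2 only shows that $(\tilde E^o_I)_\alpha$ is an affine bundle pulled back from $\tilde F^I_\alpha$, so you get $[(\tilde E^o_I)_\alpha,\mu_{m_I}]=\LL^{d_\alpha}[\tilde F^I_\alpha,\mu_{m_I}]$. This pushes the $\muhat$-action down to the fixed locus but does not make it trivial — $\mu_{m_I}$ can still act by nontrivial deck transformations on $\tilde F^I_\alpha$, so you have not established that $[\psi_f]\in\M_\C$. The paper's Lemma on no monodromy resolves this by building a connected torus $\tilde T$ (a fiber product using primitivity of $\chi$) acting on $\tilde Y$ with $\mu_{m_I}\subset\tilde T$, and then invoking the key Lemma~\ref{lemma cyclic subgroup}: any finite cyclic subgroup of a connected torus acting on a smooth quasi-projective variety gives $[W,\mu]=[W]=[W/\mu]$ in $\M^\muhat_\C$, proved by stratifying $W$ into pieces $(V\setminus\{0\})\times F$ via the \BiBi decomposition of a smooth compactification and using relation~\eqref{eqn: vector bundle with G action is trivial in equivariant K-group}. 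To repair your argument you would essentially need to reproduce this connected-torus trick; without it, the claim that the $\muhat$-action is trivial is unsupported.
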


By equivariant resolution of singularities
\cite[Cor~7.6.3]{Villamayor}, we may assume that $h:Y\to X$, the
embedded resolution of $X_{0}$, is $T$-equivariant.  Namely $Y$ is a
non-singular $T$-variety and
\[
\tilde{f}=h\circ f:Y\to \C
\]
is $T$-equivariant with central fiber $E$ which is a normal crossing
divisor with non-singular components $E_{j}$, $j\in J$. Let
$\C^{*}\subset T$ be the one-parameter subgroup whose action on $X$ is
circle compact. Then the action of $\C^{*} $ on $Y$ is circle compact
(since $h$ is proper) and each $E_{j}$ is invariant (but not
necessarily fixed).

We will make use of the \BiBi decomposition for smooth varieties
\cite{Bialynicki-Birula}. \BiBi proves that if $V$ is a smooth
projective variety with a $\C ^{*} $-action, then there is a locally
closed stratification:
\[
V=\bigcup _{F}Z_{F}
\]
where the union is over the components of the fixed point locus and
$Z_{F}\to F$ is a Zariski locally trivial affine bundle. The rank of
the affine bundle $Z_{F}\to F$ is given by
\[
n (F) = \Index (N_{F/V})
\]
where the index of the normal bundle $N_{F/V}$ is the number of
positive weights of the fiberwise action of $\C^{*}$. The morphisms
$Z_{F}\to F$ are defined by $x\mapsto \lim_{t\to 0}t\cdot x$ and
consequently, the above stratification also exists for smooth
varieties with a circle compact action. As a corollary of the \BiBi
decomposition, we get the following relation in the \motivicring.

\begin{lemma}\label{lem: BB for circle compact actions}
Let $V$ be a smooth quasi-projective variety with a circle compact
$\C^{*}$-action. For each component $F$ of the fixed point locus,
we define the index of $F$, denoted by $n (F)$, to be the number of
positive weights in the action of $\C^{*}$ on $N_{F/V}$. Then in
$\M_{\C}$ we have
\[
[V] = \sum _{F}\, \LL  ^{n (F)}\, [F],
\]
where the sum is over the components of the fixed point locus and 
$\LL =[\mathbb{A}^{1}_{\C}]$ is the Lefschetz motive.
\end{lemma}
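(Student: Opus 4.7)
The plan is to establish the attracting-cell decomposition directly from the hypotheses, and then translate it into motivic language via the scissor relation. Circle compactness gives, for every $v\in V$, a well-defined limit $\lim_{\lambda\to 0}\lambda\cdot v \in V^{\C^*}$; since $V^{\C^*}$ is compact, it has only finitely many connected components $F$. Define
\[
Z_F = \{\, v \in V \mid \lim_{\lambda\to 0}\lambda\cdot v \in F\,\}.
\]
Then set-theoretically $V = \bigsqcup_F Z_F$, and sending $v$ to its limit gives a morphism $Z_F \to F$.

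Next I would promote this to a scheme-theoretic statement: each $Z_F$ is locally closed in $V$, and $Z_F \to F$ is a Zariski locally trivial $\AA^{n(F)}$-bundle. The classical Bialynicki-Birula theorem proves exactly this assertion for smooth projective $V$. To extend to our setting I would equivariantly compactify $V$: by Sumihiro's equivariant embedding theorem $V$ embeds $\C^*$-equivariantly into a projective $\C^*$-variety, which after equivariant resolution of singularities \cite{Villamayor} can be assumed smooth; call this $\bar V$. Apply the projective BB decomposition to $\bar V$ to get cells $\bar Z_{\bar F} \to \bar F$ indexed by fixed components $\bar F$ of $\bar V$. The circle compactness hypothesis on $V$ guarantees that the attractor of a point $v\in V\subset \bar V$ does not escape to the boundary $\bar V \setminus V$; consequently the cells $Z_F$ of $V$ are precisely the intersections $V \cap \bar Z_{\bar F}$ for those $\bar F$ contained in $V$, and are therefore locally closed in $V$ with the inherited Zariski-locally-trivial affine bundle structure over $F=\bar F$. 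The fiber rank at $F$ is the number of positive weights of $\C^*$ on the normal bundle $N_{F/V}$, which agrees with $n(F)$ since this count depends only on the local action of $\C^*$ near $F$ and thus is unchanged by passing from $V$ to $\bar V$.

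Finally, combining the scissor relation
\[
[V] = \sum_F [Z_F] \in K_0(\Var_\C)
\]
with property (1) of Section~\ref{subsec:ring:of:motives}, which asserts $[Z_F] = \LL^{n(F)}[F]$ for the Zariski locally trivial $\AA^{n(F)}$-bundle $Z_F \to F$, yields the claimed identity
\[
[V] = \sum_F \LL^{n(F)}[F] \in \M_\C.
\]

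The main obstacle is the extension of Bialynicki-Birula's decomposition from the projective case to the quasi-projective circle compact case, and in particular the Zariski (as opposed to merely étale or analytic) local triviality of the attracting map $Z_F\to F$. I expect the cleanest route is the equivariant compactification argument sketched above, which reduces everything to the classical projective statement once one checks that circle compactness of the action on $V$ forces the $V$-cells to sit as open subvarieties inside the projective cells of a smooth equivariant compactification. The other routine-but-nontrivial point is extracting the correct rank $n(F)$ from the compactification; this is a purely local computation at $F$ involving only the $\C^*$-representation $N_{F/V}$.
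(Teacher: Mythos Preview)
Your approach is essentially the paper's: quote the projective Bia\l ynicki-Birula theorem, observe that the attracting-cell description persists in the circle compact quasi-projective setting, and then read off the motivic identity from the scissor relation together with the Zariski local triviality of $Z_F\to F$. The paper is in fact terser than you are --- it simply asserts that ``the above stratification also exists for smooth varieties with a circle compact action'' and declares the lemma as a corollary --- so your compactification argument supplies details the paper omits.

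One step in your argument deserves to be made explicit. You write that $Z_F = V\cap \bar Z_{\bar F}$ inherits the Zariski locally trivial affine bundle structure from $\bar Z_{\bar F}$, but an open subset of an $\AA^{n(F)}$-bundle need not itself be an $\AA^{n(F)}$-bundle. What you actually need is $\bar Z_{\bar F}\subset V$ whenever $\bar F\subset V$, so that $Z_F=\bar Z_{\bar F}$ on the nose. This holds because the boundary $D=\bar V\setminus V$ is closed and $\C^{*}$-invariant (you built $\bar V$ as an equivariant compactification), hence any $w\in D$ has $\lim_{\lambda\to 0}\lambda\cdot w\in D$; but points of $\bar Z_{\bar F}$ have limit in $\bar F\subset V$, so none of them can lie in $D$. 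With this one sentence added, your argument is complete and matches the paper's intent.
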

We call this decomposition a \emph{BB decomposition}.

We begin our proof of Theorem~\ref{thm: main result of appendix} with
a ``no monodromy'' result.
\begin{lemma}\label{lem: no monodromy}
The following equation holds in $\M _{\C }^{\muhat }$:
\[
[\tilde{E}^{o}_{I},\mu _{m_{I}}] = [E_{I}^{o}].
\]
Under the further assumption that $X_{0}$ is reduced,
the above equation holds in $\M _{X_{\aff }}^{\muhat }$.
\end{lemma}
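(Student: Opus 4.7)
The plan is to use the $T$-equivariance and primitivity of $\chi$ to show that the Denef--Loeser cover $\tilde E_I^o\to E_I^o$ becomes motivically trivial in $\M^{\muhat}_\C$ via the vector bundle relation. First I would pass, via equivariant resolution \cite[Cor.~7.6.3]{Villamayor}, to a $T$-equivariant embedded resolution $h\colon Y\to X$, so each $E_j$ is $T$-invariant and $\tilde f=f\circ h$ has character~$\chi$. The case $m_I=1$ is immediate, so assume $m_I>1$.

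The numerical engine comes from analysis at $T$-fixed points. At a $T$-fixed $p_0\in E_J^o$, equivariant local coordinates $x_j$ ($j\in J$) with characters $\alpha_j\in X^*(T)$, together with $\tilde f=u\prod_{j\in J}x_j^{N_j}$ and $u(p_0)\in\C^*$ fixed, force the character identity $\chi=\sum_{j\in J}N_j\alpha_j$; by primitivity, $\gcd_{j\in J}N_j=1$. Borel's theorem, applied to each component $F$ of the $\C^*$-fixed locus (complete and $T$-invariant by circle-compactness), produces $T$-fixed points on every $F$, so strata containing such an $F$ already satisfy $m=1$. Moreover, primitivity of $\chi$ lets me choose a one-parameter subgroup $\C^*\subset T$ with $\chi|_{\C^*}$ of weight $1$; the same local identity then gives $w_u\equiv 1\pmod{m_I}$ for the $\C^*$-weight $w_u$ of $u$, and in particular $w_u\ne 0$ whenever $m_I>1$.

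I would then realize $\tilde E_I^o=\{v^{m_I}=u\}$ inside the total space of a $\mu_{m_I}$-equivariant line bundle $L\to E_I^o$ with $L^{m_I}\cong\O_{E_I^o}$ trivialized by $u$; the vector bundle relation gives $[L,\mu_{m_I}]=\LL[E_I^o]$ in $\M^{\muhat}_\C$. The $\mu_{m_I}$-invariant map $\phi\colon L\to L^{m_I}\cong\C\times E_I^o$, $v\mapsto v^{m_I}$, then stratifies $L$ into its zero section (class $[E_I^o]$), the cover $\tilde E_I^o=\phi^{-1}(\{1\}\times E_I^o)$, and a residual Kummer $\mu_{m_I}$-torsor over $(\C^*\setminus\{1\})\times E_I^o$. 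Using the elementary identities $[\C,\mu_{m_I}\text{-canonical}]=\LL$ (vector bundle relation) and $[\C^*,\mu_{m_I}\text{-Kummer}]=\LL-1$ (scissor), the desired $[\tilde E_I^o,\mu_{m_I}]=[E_I^o]$ reduces to showing that this residual Kummer torsor has class $(\LL-2)[E_I^o]$.

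The hard part will be this residual identity, i.e., showing that the class of a Kummer $\mu_{m_I}$-torsor equals that of its base in $\M^{\muhat}_{E_I^o}$ despite possibly nontrivial $L$ (Kummer covers are not Zariski-locally trivial over a general base). My plan is to use a \BiBi decomposition of $E_I^o$ coming from the circle-compact $\C^*$-flow inherited from $Y$; on each attracting cell, $w_u\ne 0$ provides a $\C^*$-slice that trivializes $u$ up to a finite cyclic cover, while deeper strata contribute only through covers with $m=1$ by the fixed-point argument, so an induction on the depth of the stratum assembles the identity. The relative version over $Z_{\aff}$ under the reducedness hypothesis on $X_0$ follows from running the same stratification on the affinization map.
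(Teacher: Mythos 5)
Your proposal takes a genuinely different route from the paper, and it contains a gap that leaves the key step unproved.

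The paper's argument hinges on a construction you never introduce: the torus $\tilde T$ defined as the fiber product of $\chi\colon T\to\C^*$ with the $N$-th power map. Because $\chi$ is primitive, $\tilde T$ is \emph{connected}, it acts on the Denef--Loeser cover $\tilde Y$ (and hence on each $\tilde E_I^o$), and it contains $\mu_{m_I}$ as a subgroup. The lemma then reduces to a general fact (Lemma~\ref{lemma cyclic subgroup} in the paper): a finite cyclic subgroup of a connected torus acting on a smooth quasi-projective variety gives a motivically trivial $\muhat$-structure. That lemma is proved by a \BiBi decomposition of $\tilde E_I^o$ together with the relation~\eqref{eqn: vector bundle with G action is trivial in equivariant K-group} and Looijenga's \cite[Lemma~5.1]{Looijenga-motivic}. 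You identify the right ingredients (circle compactness, primitivity, \BiBi cells, the vector bundle relation) but not this organizing idea; your $T$-fixed-point analysis only shows $m_J=1$ for strata that actually contain $\C^*$-fixed components, and says nothing directly about an $E_I^o$ with $m_I>1$ — which, by your own Borel argument, contains \emph{no} $\C^*$-fixed points, so a \BiBi decomposition of $E_I^o$ itself is vacuous (every flow line escapes to deeper strata).

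The more serious problem is that your reduction to the ``residual Kummer torsor'' makes no progress. Writing $[L]=\LL[E_I^o]$ (vector bundle relation) and decomposing $L$ into the zero section, $\tilde E_I^o=\phi^{-1}(u)$, and the residual $R=\phi^{-1}\bigl((\C^*\setminus\{1\})\cdot u\bigr)$, one gets exactly
\[
[\tilde E_I^o,\mu_{m_I}]+[R,\mu_{m_I}]=(\LL-1)[E_I^o],
\]
so the statement $[R,\mu_{m_I}]=(\LL-2)[E_I^o]$ is \emph{equivalent} to the statement $[\tilde E_I^o,\mu_{m_I}]=[E_I^o]$ you want to prove; the decomposition has only restated the problem. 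Nothing in your final paragraph supplies an independent handle on $[R,\mu_{m_I}]$: ``$w_u\neq 0$ provides a $\C^*$-slice that trivializes $u$ up to a finite cyclic cover'' and ``deeper strata contribute only through covers with $m=1$'' conflate the \BiBi stratification of $Y$ with the boundary stratification of $E_I^o$ by the $E_J^o$, and as written do not yield an induction. To close the argument you would, in effect, have to prove something equivalent to the paper's Lemma~\ref{lemma cyclic subgroup} — at which point the clean route is to pass to the cover $\tilde Y$, observe that the connected torus $\tilde T$ acts there with $\mu_{m_I}\subset\tilde T$, and apply that lemma directly.

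One smaller point: your observation that $\chi=\sum_{j\in J}N_j\alpha_j$ with $\chi$ primitive forces $\gcd_{j\in J}N_j=1$ is correct, and it is a nice piece of local information; but since it only applies where $T$-fixed points exist, it cannot replace the global $\tilde T$-construction.
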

An immediate corollary of Lemma~\ref{lem: no monodromy} is that $[\psi
_{f}]$ lies in the subring $\M _{\C }\subset \M _{\C }^{\muhat }$ and
that if $X_{0}$ is reduced, then $[\phi _{f}]_{Z_{\aff }}$ lies in the
subring $\M _{Z_{\aff }}\subset \M _{Z_{\aff }}^{\muhat }$.

To prove Lemma~\ref{lem: no monodromy}, we recall the construction of
the $\mu _{m_{I}}$-cover $\tilde {E}_{I}^{o}\to E^{o}_{I}$ given in
\cite[\S~5]{Looijenga-motivic}.  Let $N=lcm (N_{i})$ and let $\tilde
{Y}\to Y$ be the $\mu _{N}$-cover obtained by base change over the
$N$th power map $(\cdot )^{N}:\C \to \C $ followed by
normalization. Define $\tilde {E}^{o}_{I}$ to be any connected
component of the preimage of $E^{o}_{I}$ in $\tilde {Y}$. The
component $\tilde {E}^{o}_{I}$ is stabilized by $\mu _{m_{I}}\subset
\mu _{N}$ whose action defines the cover $\tilde {E}^{o}_{I}\to
E^{o}_{I}$.

Observe that the composition
\[
E_{i} \to Y \to X \to  X_{\aff }
\] 
contracts $E_{i}$ to a point unless $E_{i}$ is a component of the
proper transform of $X_{0}$. Thus for these components (and their
intersections), the proof given below (stated for absolute classes),
applies to relative classes over $X_{\aff }$ as well. Under the
assumption that $X_{0}$ is reduced, those $E_{i}$ which are components
of the proper transform of $X_{0}$ have multiplicity one and so
$\tilde {E}_{i}=E_{i}$ and there is nothing to prove.

We define $\tilde {T}$ by the fibered product
\[
\begin{diagram}[height=1.8em,width=1.8em,nohug]
\tilde {T}&\rTo ^{\tilde {\chi }} & \C ^{*}\\
\dTo &&\dTo_{(\cdot )^{N}}\\
{T}&\rTo ^{ {\chi }} & \C ^{*}\\
\end{diagram}
\]
Thus $\tilde {T}$ is an extension of $T$ by $\mu _{N}\subset \C ^{*}$
and it has character $\tilde {\chi }$ satisfying $\tilde {\chi
}^{N}=\chi $. Moreover, $\tilde {\chi }$ is the identity on the
subgroup $\mu _{N}\subset \tilde {T}$. The key fact here is that since
$\chi $ is primitive, $\tilde {T}$ is connected.

By construction, $\tilde {T}$ acts on the base change of $Y$ over the
$N$th power map and hence it acts on the normalization $\tilde
{Y}$. Thus we have obtained an action of a connected torus $\tilde
{T}$ on $\tilde {Y}$ covering the $T$-action on $Y$. Since $T$ acts on
each $E^{o}_{I}$, $\tilde {T}$ acts on each component of the preimage
of $E^{o}_{I}$ in $\tilde {Y}$. Thus we have an action of the
connected torus $\tilde {T}$ on $\tilde {E}^{o}_{I}$ such that the
$\mu _{m_{I}}$-action is induced by the subgroup 
\[
\mu _{m_{I}}\subset
\mu _{N}\subset \tilde {T}.
\]
Lemma~\ref{lem: no monodromy} then
follows from the following:

\begin{lemma}\label{lemma cyclic subgroup}
Let $W$ be a smooth quasi-projective variety with the action of a
connected torus $\tilde {T}$. Then for any finite cyclic subgroup $\mu
\subset \tilde {T}$, the equation
\[
[W,\mu ] = [W] = [W/\mu ]
\]
holds in $\M ^{\muhat }_{\C  }$. 
\end{lemma}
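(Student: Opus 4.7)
The plan is to reduce to the smooth projective case by equivariant compactification, then apply a Bialynicki--Birula decomposition with respect to a generic one-parameter subgroup of $\tilde T$ to cut $W$ into $\tilde T$-equivariant vector-bundle strata. On each such stratum the vector-bundle relation~\eqref{eqn: vector bundle with G action is trivial in equivariant K-group} in $\M^\muhat_\C$ gives the first equality, and a short toric computation of the fiberwise quotient gives the second.

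First, by Sumihiro's theorem together with equivariant resolution of singularities, one can $\tilde T$-equivariantly embed $W$ as an open subvariety of a smooth projective $\tilde T$-variety $\bar W$ with $\tilde T$-invariant simple normal crossings boundary $D=\bar W\setminus W$. Stratifying $D$ by intersections of its components produces smooth $\tilde T$-invariant locally closed pieces of strictly smaller dimension, so scissor relations in $\M^\muhat_\C$ and induction on $\dim W$ reduce both claims to the case $W$ smooth projective. There, choose a generic one-parameter subgroup $\lambda\colon\C^*\to \tilde T$ with $W^\lambda=W^{\tilde T}$, and apply the classical BB decomposition $W=\bigsqcup_F Z_F$, where $F$ ranges over connected components of $W^{\tilde T}$ and each $Z_F\to F$ is a $\tilde T$-equivariant vector bundle of rank $n(F)$. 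Since $\tilde T$ is abelian and $F\subset W^{\tilde T}\subset W^\mu$, each $Z_F\to F$ is in particular a $\mu$-equivariant vector bundle over a $\mu$-trivial base.

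For the equality $[W,\mu]=[W]$ in $\M^\muhat_\C$, relation~\eqref{eqn: vector bundle with G action is trivial in equivariant K-group} gives
\[
[Z_F,\mu] = \LL^{n(F)}[F] = [Z_F],
\]
the second equality being the ordinary Zariski locally trivial vector-bundle formula; summing over $F$ yields $[W,\mu]=[W]$. For $[W]=[W/\mu]$, the BB decomposition descends to $W/\mu=\bigsqcup_F Z_F/\mu$, and $Z_F/\mu\to F$ is Zariski locally trivial with fiber $\C^{n(F)}/\mu$. Decomposing $Z_F$ into $\tilde T$-weight subbundles exhibits $\mu$ as a finite subgroup of the standard diagonal torus acting on $\C^{n(F)}$, so $\C^{n(F)}/\mu$ is an affine toric variety whose torus-orbit stratification mirrors that of $\C^{n(F)}$ (orbits indexed by faces of the standard orthant, each a torus of the appropriate dimension). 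A direct count yields
\[
[\C^{n(F)}/\mu] = \sum_{S\subseteq\{1,\dots,n(F)\}} (\LL-1)^{n(F)-|S|} = \LL^{n(F)},
\]
hence $[Z_F/\mu]=\LL^{n(F)}[F]=[Z_F]$ and the equality $[W/\mu]=[W]$ follows by summation.

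The main obstacle will be the reduction step: Sumihiro's theorem furnishes an equivariant projective compactification, but obtaining a smooth $\bar W$ with $\tilde T$-invariant SNC boundary requires $\tilde T$-equivariant resolution of singularities and equivariant principalization, after which one must check that the resulting boundary strata are again smooth $\tilde T$-invariant quasi-projective varieties to which the inductive hypothesis applies. The generic one-parameter subgroup step and the toric fiber computation are routine once the reduction is in place.
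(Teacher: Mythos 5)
Your proof is correct, and it takes a genuinely different route from the paper's in two respects. First, the paper works with the $\C^*$-subgroup \emph{generated by} $\mu$ (so $\mu$ lies in $\C^*$, not merely commutes with it), applies the \BiBi decomposition to an arbitrary $\C^*$-equivariant smooth compactification $\overline W$, refines the strata until they trivialize as $(V\setminus\{0\})\times F$ with $F$ fixed, and only then restricts to $W$; you instead pick a \emph{generic} one-parameter subgroup $\lambda$ with $W^\lambda = W^{\tilde T}$ and reduce to the smooth projective case by boundary induction using an equivariant SNC compactification. Both reductions are legitimate (yours requires the full strength of equivariant resolution/principalization, but is cleaner about how the stratification interacts with $W$; the paper's needs less of the compactification but is a little vague about restricting strata to $W$). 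Your use of a generic $\lambda$ works because the fixed components $F$ still lie in $W^{\tilde T}\subset W^\mu$, so each BB stratum $Z_F\to F$ is a $\mu$-equivariant vector bundle over a $\mu$-trivial base and relation~\eqref{eqn: vector bundle with G action is trivial in equivariant K-group} applies directly, without the intermediate refinement to $(V\setminus\{0\})\times F$. Second, and more substantively, for the equality $[W]=[W/\mu]$ the paper cites \cite[Lemma~5.1]{Looijenga-motivic} for the model case $[V\setminus\{0\}]=[(V\setminus\{0\})/\mu]$; you replace this citation by a short self-contained toric calculation, decomposing $Z_F$ into $\tilde T$-weight subbundles to exhibit $\mu$ inside the diagonal torus of $\C^{n(F)}$ and then summing over orbit strata to get $[\C^{n(F)}/\mu]=\sum_{S}(\LL-1)^{n(F)-|S|}=\LL^{n(F)}$ (using that a torus modulo a finite subgroup is again a torus of the same dimension). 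This makes the quotient step elementary and explicit, which is a genuine improvement in transparency over the cited reference, at the modest cost of the heavier compactification machinery in the reduction step.
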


\proof Let $\C ^{*}\subset \tilde {T}$ be the 1-parameter subgroup
generated by $\mu \subset \tilde {T}$. The $\C ^{*} $-action on $W$
gives rise to a $\C ^{*}$-equivariant stratification of $W$ into
varieties $W_{i}$ of the form $\left(V-\{0 \} \right)\times F$ where
$F$ is fixed and $V$ is a $\C ^{*}$-representation. This assertion
follows from applying the \BiBi decomposition to $\overline{W}$, any
$\C ^{*}$-equivariant smooth compactification of $W$ and stratifying
further to trivialize all the bundles and to make the zero sections
separate strata. The induced stratification of $W$ then is of the
desired form. Thus to prove Lemma~\ref{lemma cyclic subgroup}, 
it then suffices to prove it
for the case of $\mu$ acting on $V-\{0\}$ where $V$ is a 
$\mu$-representation. By the relation given in 
equation~\eqref{eqn: vector bundle with G action is trivial in equivariant K-group}, 
we have $[V,\mu ]=[V]$ and hence $[V-\{0 \},\mu ] = [V-\{0 \}]$. 
The equality
$[V-\{0 \}] = [\left(V-\{0 \} \right)/\mu ]$ follows from
\cite[Lemma~5.1]{Looijenga-motivic}.\qed

Applying Lemma~\ref{lemma cyclic subgroup}
to~\eqref{eqn: Denef-Loeser defn of motive of nearby cycles}, 
we see that in order to prove $[\psi_{f}]=[X_{1}]$, we must prove
\[
[X_{1}] = \sum _{I \neq \emptyset } (1-\LL )^{|I|-1} [E^{o}_{I}].
\]

As explained in the beginning of \S\ref{subsec:triv}, there is an
isomorphism
\[
X_{1}\times \C ^{*} \cong X-X_{0}.
\]
Consequently we get $(\LL -1)[X_{1}] = [X]-[X_{0}]$ or
equivalently
\[
[Y] = [Y_{0}] + (\LL  -1) [X_{1}].
\]
Combining this with the previous equation we find that the equation we
wish to prove, $[\psi_{f}] = [X_{1}]$, is equivalent to
\[
[Y] = [Y_{0}] - \sum _{I\neq \emptyset } (1-\LL  )^{|I|} [E_{I}^{o}],
\]
which can also be written (using the conventions about the empty set)
as
\begin{equation}\label{eqn: thm reformulation 1}
0=\sum _{I} (1-\LL  )^{|I|} [E_{I}^{o}].
\end{equation}
By the principle of inclusion/exclusion, we can write
\begin{align*}
[E_{I}^{o}] &= [E_{I}] -\sum _{\emptyset \neq K\subset I^{c}} (-1)^{|K|} [E_{I\cup K}]\\
&= \sum _{K\subset I^{c}} (-1)^{K}[E_{I\cup K}],
\end{align*}
thus we have 
\[
(-1)^{|I|}[E^{o}_{I}] = \sum _{A\supset I} (-1)^{|A|}[E_{A}].
\]
Thus the right hand side of equation~\eqref{eqn: thm reformulation 1}
becomes
\[
\sum _{I} (\LL  -1)^{|I|} \sum _{A\supset I} (-1)^{|A|} [E_{A}] = \sum
_{A} (-1)^{|A|} [E_{A}] \sum _{I\subset A} (\LL  -1)^{|I|}.
\]
Since
\[
\sum _{I\subset A} (\LL  -1)^{|I|} =\sum _{n=0}^{|A|} \binom{|A|}{n} (\LL 
-1)^{n} = \LL  ^{|A|},
\]
we can reformulate the equation we need to prove as
\begin{equation}\label{eqn: thm reformulation 2}
0 = \sum _{A} (-\LL  )^{|A|} [E_{A}].
\end{equation}

Note that each $E_{A}$ is smooth and $\C^{*}$ invariant and that
the induced $\C^{*}$-action on $E_{A}$ is circle compact, so we
have a BB decomposition for each.

Let $F$ denote a component of the fixed point set $Y^{\C^{*}}$
and let $\theta _{F,A}$ denote a component of $F\cap
E_{A}$. Since $E_{A}$ is smooth and $\C^{*}$
invariant and the induced $\C^{*}$-action is circle compact,
$E_{A}$ admits a BB decomposition
\[
[E_{A}] = \sum _{F}\sum _{\theta _{F,A}} \LL  ^{n (\theta _{F,A})}
[\theta _{F,a}]
\]
where 
\[
n (\theta _{F,A}) = \Index  (N_{\theta _{F,A}/E_{A}}) .
\]
Therefore the sum in equation~\eqref{eqn: thm reformulation 2} (which
we wish to prove is zero) is given by
\[
\sum _{A} (-\LL  )^{|A| } [E_{A}] = \sum _{A}\sum _{F}\sum _{\theta
_{F,A}} (-1)^{|A|}\,\,\LL  ^{|A|+n (\theta _{F,A})}\,\, [ \theta _{F,A}].
\]
We define a set $I (F)$ by
\[
I (F) = \{i:\,\, F\subset E_{i} \}.
\]
Then clearly 
\[
\theta _{F,A} = \theta _{F,A'}\quad \text{if}\quad A\cup I (F) =A'\cup
I (F).
\]
So writing $A=B\cup C$ where $B\subset I (F)$ and $C\subset I
(F)^{c}$, we can rewrite the above sum as
\begin{equation}\label{eqn: sum with BB decomp}
\sum _{F}\,\,\sum _{C\subset I (F)^{c}}\,\,\sum _{\theta _{F,C}} (-\LL  )^{|C|} \,[\theta _{F,C}]\,\sum _{B\subset I (F)} (-1)^{|B|} \LL  ^{|B|+n (\theta _{F,B\cup C})}\,.
\end{equation}
We will show that the inner most sum is always zero which will prove
Theorem~\ref{thm: main result of appendix}.

Let $y\in \theta _{F,A}$ where $A=B\cup C$. We write the $\C^{*}
$ representation $T_{y}Y$ in two ways:
\begin{align*}
TF + N_{F/Y} &= TE_{A} + N_{E_{A}/Y} \\
&= T\theta _{F,A} + N_{\theta _{F,A}/E_{A}}+\sum _{i\in A}N_{E_{i}/Y}
\end{align*}
where restriction to the point $y$ is implicit in the above equation.
Counting positive weights on each side, we get
\[
n (F) = n (\theta _{F,A}) +\sum _{i\in A} m_{i} (\theta _{F,A})
\]
where
\[
m_{i} (\theta _{F,A}) = \Index (N_{E_{i}/Y}|_{\theta _{F,A}}).
\]
Note that $m_{i} (\theta _{F,A}) $ is 1 or 0 depending on if the
weight of the $\C^{*}$-action on $E_{i}|y$ is positive or
not. Note also that if $i\in I (F)$ then
\[
m_{i} (\theta _{F,A}) = m_{i,F} = \Index (N_{E_{i}/Y}|_{F}).
\]
Moreover, if $i\in I (F)^{c}$, then 
\[
N_{E_{i}/Y}|_{y} \subset  TF|_{y}
\]
and so $m_{i} (\theta _{F,A})=0$. 

Thus writing $A=B\cup C$ with $B\subset I (F)$ and $C\subset I (F)^{c}$, 
we get 
\[
n (\theta _{F,A}) = n (F) - \sum _{i\in B} m_{i,F},
\]
and so
\[
\sum _{B\subset I (F)} (-1)^{|B|}\,\LL  ^{|B|+n (\theta _{F,A})} = \LL 
^{n (F)}\,\sum _{B\subset I (F)} (-1)^{|B|}\, \LL  ^{\sum _{i\in B}
(1-m_{i,F})}.
\]
For $k=0,1$, we define
\[
I_{k} (F) = \{i\in I (F):\quad m_{i,F}=k \}.
\]
Then
\[
\sum _{B\subset I (F)} (-1)^{|B|}\, \LL  ^{\sum _{i\in B} (1-m_{i,F})} =
\sum _{B_{0}\subset I_{0} (F)} (-\LL  )^{|B_{0}|} \sum _{B_{1}\subset
I_{1} (F)} (-1)^{|B_{1}|},
\]
but 
\[
\sum _{B_{1}\subset I_{1} (F)} (-1)^{|B_{1}|} = 0
\]
unless $I_{1} (F)=\emptyset $. Since the above equation implies that
the expression in equation~\eqref{eqn: sum with BB decomp} is zero,
and that in turn verifies equations~\eqref{eqn: thm reformulation 1}
and \eqref{eqn: thm reformulation 2} which are equivalent to
Theorem~\ref{thm: main result of appendix}, 
it only remains for us to prove that 
$I_{1} (F)\neq \emptyset$ for all $F$.

Let $y\in F$. We need to show that for some $i$, the action of $\C
^{*} $ on $N_{E_{i}/Y}|_{y}$ has positive weight.

By the Luna slice theorem, there is an etale local neighborhood of
$y\in Y$ which is equivariantly isomorphic to $T_{y}Y$. Over the point
$y$, we have a decomposition
\[
TY = TF + N_{F/E_{I (F)}} +\sum _{i\in I (F)} N_{E_{i}/Y}.
\]
Let $(u_{1},\dots ,u_{s},v_{1},\dots ,v_{p},\{w_{i} \}_{i\in I (F)})$
be linear coordinates on $T_{y}Y$ compatible with the above
splitting. The action of $t\in \C^{*}$ on $T_{y}Y$ is given by
\[
t\cdot (u,v,w) = (u_{1},\dots ,u_{s},t^{a_{1}}v_{1},\dots ,t^{a_{p}}v_{p},\{t^{b_{i}}w_{i} \}_{i\in I (F)}).
\]
In these coordinates, the function $\tilde{f}$ is given by
\[
\tilde{f} (u,v,w) = g (u) \prod _{i\in I (F)} w_{i}^{N_{i}}
\]
where $g (u)$ is a unit.  Since the $\C ^{*}$-action on $Y$ is circle
compact, $\tilde {f}$ is equivariant with respect to an action on $\C$
of positive weight $l$, that is
\[
\tilde{f} (t\cdot (u,v,w)) = t^{l} \tilde{f} (u,v,w).
\]
This implies that 
\[
l=\sum _{i\in I (F)} b_{i}N_{i}.
\]
Then since $N_{i}>0 $ and $l>0$ we have that $b_{i}>0$ for some $i\in
I (F)$ and so for this $i$, we have $m_{i,F}=1$ which was what we
needed to prove. The proof of Theorem~\ref{thm: main
result of appendix} is now complete.\qed

\bibliography{mainbiblio}

\begin{thebibliography}{10}

\bibitem{Behrend-micro}
Kai Behrend.
\newblock {Donaldson-Thomas invariants via microlocal geometry}.
\newblock To appear in Annals of Math. arXiv:math/0507523.

\bibitem{Behrend-Fantechi08}
Kai Behrend and Barbara Fantechi.
\newblock Symmetric obstruction theories and {H}ilbert schemes of points on
  threefolds.
\newblock {\em Algebra Number Theory}, 2(3):313--345, 2008.
\newblock arXiv:math/0512556.

\bibitem{Bialynicki-Birula}
A.~Bia{\l}ynicki-Birula.
\newblock Some theorems on actions of algebraic groups.
\newblock {\em Ann. of Math. (2)}, 98:480--497, 1973.

\bibitem{Bittner-motivic}
Franziska Bittner.
\newblock On motivic zeta functions and the motivic nearby fiber.
\newblock {\em Math. Z.}, 249(1):63--83, 2005.

\bibitem{Bridgeland-intro}
Tom Bridgeland.
\newblock {An introduction to motivic Hall algebras}.
\newblock arXiv:math/1002.4372.

\bibitem{Cheah}
Jan Cheah.
\newblock On the cohomology of {H}ilbert schemes of points.
\newblock {\em J. Algebraic Geom.}, 5(3):479--511, 1996.

\bibitem{Davison}
Ben Davison.
\newblock {Invariance of orientation data for ind-constructible Calabi-Yau
  $A_{\infty}$ categories under derived equivalence}.
\newblock arXiv:1006.5475.

\bibitem{Denef-Loeser-Igusa}
Jan Denef and Fran{\c{c}}ois Loeser.
\newblock Motivic {I}gusa zeta functions.
\newblock {\em J. Algebraic Geom.}, 7(3):505--537, 1998.

\bibitem{Denef-Loeser-Thom}
Jan Denef and Fran{\c{c}}ois Loeser.
\newblock Motivic exponential integrals and a motivic {T}hom-{S}ebastiani
  theorem.
\newblock {\em Duke Math. J.}, 99(2):285--309, 1999.

\bibitem{Denef-Loeser-geometry}
Jan Denef and Fran{\c{c}}ois Loeser.
\newblock Geometry on arc spaces of algebraic varieties.
\newblock In {\em European {C}ongress of {M}athematics, {V}ol. {I}
  ({B}arcelona, 2000)}, volume 201 of {\em Progr. Math.}, pages 327--348.
  Birkh\"auser, Basel, 2001.

\bibitem{Dimca-Szendroi}
Alexandru Dimca and Bal{\'a}zs Szendr{\H{o}}i.
\newblock The {M}ilnor fibre of the {P}faffian and the {H}ilbert scheme of four
  points on {$\Bbb C^3$}.
\newblock {\em Math. Res. Lett.}, 16(6):1037--1055, 2009.

\bibitem{Feit-Fine}
Walter Feit and N.~J. Fine.
\newblock Pairs of commuting matrices over a finite field.
\newblock {\em Duke Math. J}, 27:91--94, 1960.

\bibitem{Getzler-mixed-hodge}
Ezra Getzler.
\newblock {Mixed Hodge structures of configuration spaces}.
\newblock arXiv:math/9510018.

\bibitem{Gottsche-formula}
Lothar G{\"o}ttsche.
\newblock The {B}etti numbers of the {H}ilbert scheme of points on a smooth
  projective surface.
\newblock {\em Math. Ann.}, 286(1-3):193--207, 1990.

\bibitem{Gottsche-motive}
Lothar G{\"o}ttsche.
\newblock On the motive of the {H}ilbert scheme of points on a surface.
\newblock {\em Math. Res. Lett.}, 8(5-6):613--627, 2001.

\bibitem{Gusein-Zade-Luengo-Melle-Hernandez-power}
S.~M. Gusein-Zade, I.~Luengo, and A.~Melle-Hern{\'a}ndez.
\newblock A power structure over the {G}rothendieck ring of varieties.
\newblock {\em Math. Res. Lett.}, 11(1):49--57, 2004.

\bibitem{Gusein-Zade-Luengo-Melle-Hernandez-Hilb}
S.~M. Gusein-Zade, I.~Luengo, and A.~Melle-Hern{\'a}ndez.
\newblock Power structure over the {G}rothendieck ring of varieties and
  generating series of {H}ilbert schemes of points.
\newblock {\em Michigan Math. J.}, 54(2):353--359, 2006.

\bibitem{Iqbal-Kozcaz-Vafa}
Amer Iqbal, Can Kozcaz, and Cumrun Vafa.
\newblock {The Refined Topological Vertex}.
\newblock arXiv:hep-th/0701156.

\bibitem{Joyce-Song}
Dominique Joyce and Yinan Song.
\newblock {A theory of generalized Donaldson-Thomas invariants}.
\newblock arXiv:math/0810.5645.

\bibitem{KS_new}
Maxim Kontsevich and Yan Soibelman.
\newblock {Cohomological Hall algebra, exponential Hodge structures and motivic
  Donaldson--Thomas invariants}.
\newblock arXiv:1006.2706.

\bibitem{Kontsevich-Soibelman}
Maxim Kontsevich and Yan Soibelman.
\newblock {Stability structures, motivic Donaldson-Thomas invariants and
  cluster transformations}.
\newblock arXiv:0811.2435.

\bibitem{Levine-Pandharipande}
M.~Levine and R.~Pandharipande.
\newblock {Algebraic cobordism revisited}.
\newblock arXiv:math/0605196.

\bibitem{Li-zeroDT}
Jun Li.
\newblock Zero dimensional {D}onaldson-{T}homas invariants of threefolds.
\newblock {\em Geom. Topol.}, 10:2117--2171 (electronic), 2006.

\bibitem{Looijenga-motivic}
Eduard Looijenga.
\newblock Motivic measures.
\newblock {\em Ast\'erisque}, (276):267--297, 2002.
\newblock S{\'e}minaire Bourbaki, Vol. 1999/2000.

\bibitem{MacMahon}
Percy~A. MacMahon.
\newblock {\em Combinatory analysis}.
\newblock Two volumes (bound as one). Chelsea Publishing Co., New York, 1960.

\bibitem{MNOP1}
D.~Maulik, N.~Nekrasov, A.~Okounkov, and R.~Pandharipande.
\newblock Gromov-{W}itten theory and {D}onaldson-{T}homas theory. {I}.
\newblock {\em Compos. Math.}, 142(5):1263--1285, 2006.
\newblock arXiv:math.AG/0312059.

\bibitem{AMorrison}
Andrew Morrison.
\newblock PhD thesis, UBC. In preparation.

\bibitem{Mustonen-Rajesh}
Ville Mustonen and R.~Rajesh.
\newblock Numerical estimation of the asymptotic behaviour of solid partitions
  of an integer.
\newblock {\em J. Phys. A}, 36(24):6651--6659, 2003.

\bibitem{Nakajima-Hilbert}
Hiraku Nakajima.
\newblock {\em Lectures on {H}ilbert schemes of points on surfaces}.
\newblock American Mathematical Society, Providence, RI, 1999.

\bibitem{Okounkov-Reshe-skew}
Andrei Okounkov and Nicolai Reshetikhin.
\newblock Random skew plane partitions and the {P}earcey process.
\newblock {\em Comm. Math. Phys.}, 269(3):571--609, 2007.

\bibitem{Reineke}
Markus Reineke.
\newblock {Poisson automorphisms and quiver moduli}.
\newblock arXiv:math/0804.3214.

\bibitem{Saito-MHP}
Morihiko Saito.
\newblock Modules de {H}odge polarisables.
\newblock {\em Publ. Res. Inst. Math. Sci.}, 24(6):849--995 (1989), 1988.

\bibitem{Saito-MHM}
Morihiko Saito.
\newblock Mixed {H}odge modules.
\newblock {\em Publ. Res. Inst. Math. Sci.}, 26(2):221--333, 1990.

\bibitem{Szendroi-GT}
Bal{\'a}zs Szendr{\H{o}}i.
\newblock Non-commutative {D}onaldson-{T}homas invariants and the conifold.
\newblock {\em Geom. Topol.}, 12(2):1171--1202, 2008.

\bibitem{Villamayor}
O.~E. Villamayor~U.
\newblock Patching local uniformizations.
\newblock {\em Ann. Sci. \'Ecole Norm. Sup. (4)}, 25(6):629--677, 1992.

\end{thebibliography}
\bibliographystyle{plain}

\end{document}